\documentclass[preprint,12pt]{elsarticle}

\usepackage{amssymb}
\usepackage{amsmath}
\usepackage{amsthm}

\usepackage{url}
\usepackage{doi}

\usepackage{etoolbox} 
\usepackage{xcolor} 
\usepackage{xspace}
\usepackage{amsmath}
\usepackage{bbold}
\usepackage[all]{xy}
\usepackage{mathtools}
\usepackage[mathscr]{euscript}

\def\bfd{\begin{color}{orange}}
\def\efd{\end{color}}

\def\cf{cf.~} 
\newcommand{\refItem}[2]{\cref{#1}(\ref{#1:#2})}

\def\ple#1{\ensuremath{{\langle #1 \rangle }}} 

\def\vuoto{}
\newcommand{\FUN}[4]{\ensuremath{{#2}#1{#3}\rightarrow{#4}}}
\newcommand{\fun}[3]{\relax\def\testa{#1}\relax\ifx\testa\vuoto
  \relax\FUN{}{}{{#2}}{{#3}}\else\relax\FUN{:}{{#1}}{{#2}}{{#3}}\fi}

\newcommand{\id}{\mathsf{id}}
\newcommand{\PW}[2][]{\ensuremath{\mathop{\mathscr{P}_{#1}{#2}}}}
\newcommand{\pw}[1]{\relax\def\testa{#1}\relax\ifx\testa\vuoto
  \relax\PW{}\else\relax\PW{\left(#1\right)}\fi}
\newcommand{\fpw}[1]{\relax\def\testa{#1}\relax\ifx\testa\vuoto
  \relax\PW[\omega]{}\else\relax\PW[\omega]{\left(#1\right)}\fi}

\newcommand{\eqc}[2][]{[#2]_{#1}} 

\newcommand{\N}{\mathbb{N}}
\newcommand{\R}{\mathbb{R}} 
\newcommand{\RPos}{\R_{\ge 0}} 

\newcommand{\PP}{\pw{}} 
 
\newcommand{\Rel}{\mathsf{Rel}}

\def\tt{\ensuremath{\textsf{t\kern-.3ex t}}}
\def\ff{\ensuremath{\textsf{f\kern-.3ex f}}}

\let\ForalL\forall \def\Forall#1.{\ForalL_{#1}}
\let\ExistS\exists \def\Exists#1.{\ExistS_{#1}}

\DeclareFontFamily{OT1}{pzc}{}
\DeclareFontShape{OT1}{pzc}{m}{it}{<->s*[1.30]pzcmi7t}{}
\DeclareMathAlphabet{\mathpzc}{OT1}{pzc}{m}{it}
\def\ct#1{\ensuremath{\mathpzc{#1}}}
\def\Ct#1{\ensuremath{\mathbf{#1}}}

\def\op{^{\mbox{\normalfont\scriptsize op}}}

\newcommand{\bop}[1]{(#1\times#1)\op} 
\def\blank{\mathchoice{\mbox{--}}{\mbox{--}}
{\mbox{\scriptsize--}}{\mbox{\tiny--}}}

\newcommand{\CC}{\ct{C}\xspace}

\newcommand{\D}{\ct{D}\xspace}

\newcommand{\oneAr}[3]{\ensuremath{{#1}:{#2}\rightarrow{#3}}}
\newcommand{\twoAr}[3]{\ensuremath{{#1}:{#2}\Rightarrow{#3}}}
\def\tdot{\textbf{.}}
\def\lsta{\vrule depth4pt width0pt}
\def\lstb{\vrule height5pt width0pt}
\def\arnta[#1]{\ar[#1]|-*=0[@]{\lsta\tdot}}
\def\arntb[#1]{\ar[#1]|-*=0[@]{\lstb\tdot}}
\newcommand{\nt}[3]{\ensuremath{{#1}:{#2} \stackrel{\makebox{\kern-.3ex\tdot}}\rightarrow{#3}}}
\newcommand{\lnt}[3]{\ensuremath{{#1}:{#2} \stackrel{\makebox{\kern-.3ex\tdot}}\rightarrow_l{#3}}}

\newcommand{\Id}{\mathsf{Id}}
\newcommand{\ID}{\mathrm{Id}}

\newcommand{\Set}{\ct{Set}\xspace}

\newcommand{\Pos}{\ct{Pos}\xspace}

\newcommand{\RDtn}{\Ct{RD}\xspace}

\newcommand{\QRDtn}{\Ct{QRD}\xspace}

\newcommand{\EQRDtn}{\Ct{EQRD}\xspace}

\newcommand{\order}{\leq}

\newcommand{\RDoc}{R}
\newcommand{\BM}{\mathsf{BM}}

\newcommand{\SDoc}{S} 
\newcommand{\reidx}[1]{_{#1}}

\newcommand{\fn}[1]{\widehat{#1}}
\newcommand{\lift}[1]{\overline{#1}}

\newcommand{\coveredarr}{c}

\newcommand{\relr}{\alpha}
\newcommand{\rels}{\beta}
\newcommand{\relt}{\gamma} 
\newcommand{\eqrelr}{\rho}
\newcommand{\eqrels}{\sigma}
 
\newcommand{\rid}{\mathsf{d}}
\newcommand{\rcomp}{\mathop{\mathbf{;}}}
\newcommand{\rconv}{^{\bot}} 
\newcommand{\rdconv}{^{\bot\bot}} 
\newcommand{\gr}[1]{\Gamma_{#1}}

\newcommand{\EQRFun}{\mathrm{U_{eq}}}
\newcommand{\REQFun}{\mathrm{EQ}}
\newcommand{\EQR}[1]{(#1)^{eq}} 
\newcommand{\EQC}[1]{\ensuremath{\ct{EQ}_{#1}}\xspace}

\newcommand{\VRel}[1]{{#1}\text{-}\mathsf{Rel}}

\newcommand{\Car}[1]{|#1|}

\newcommand{\dist}{\delta}
\newcommand{\Met}{\ct{Met}}

\def\RB#1{\mathchoice
  {\rotatebox[origin=c]{180}{$#1$}}
  {\rotatebox[origin=c]{180}{$#1$}}
  {\rotatebox[origin=c]{180}{$\scriptstyle#1$}}
  {\rotatebox[origin=c]{180}{$\scriptscriptstyle#1$}}}
\def\Ex{\RB{E}\kern-.3ex}
\def\Al{\RB{A}\kern-.6ex}

\newcommand{\Span}[1]{\mathsf{Spn}^{#1}}
\newcommand{\JSpan}[1]{\mathsf{JSpn}^{#1}}
\newcommand{\spn}[5]{\ensuremath{#1 \xleftarrow{#2} #3 \xrightarrow{#4} #5}}

\newcommand{\jmSpan}[1]{\mathsf{JmSpn}^{#1}}

\newcommand{\mnd}{\mathbb{T}} 
\newcommand{\Lmnd}{\mathbb{L}}
\newcommand{\Kmnd}{\mathbb{K}}

\newcommand{\Proj}[1]{\ct{P}_{#1}}
\newcommand{\RProj}[1]{\mathsf{Proj}^{#1}}

\newcommand{\app}{.}

\usepackage[capitalize]{cleveref}

\theoremstyle{theorem}
\newtheorem{theorem}{Theorem}[section]
\newtheorem{lemma}[theorem]{Lemma}
\newtheorem{proposition}[theorem]{Proposition}
\newtheorem{corollary}[theorem]{Corollary} 

\theoremstyle{definition}
\newtheorem{definition}[theorem]{Definition}
\newtheorem{remark}[theorem]{Remark} 
\newtheorem{example}[theorem]{Example} 

  \crefname{fig}{Figure}{Figures}
  \crefname{thm}{Theorem}{Theorems}
  \crefname{lem}{Lemma}{Lemmas}
  \crefname{def}{Definition}{Definitions}
  \crefname{prop}{Proposition}{Propositions}
  \crefname{cor}{Corollary}{Corollaries}
  \crefname{ex}{Example}{Examples}
  \crefname{rem}{Remark}{Remarks}
  \crefname{asm}{Assumption}{Assumptions}

\journal{JPAA}

\begin{document}

\begin{frontmatter}

\title{Projective covers, doctrines of algebras and the relational quotient completion} 

\author[fd]{Francesco Dagnino}
\ead{francesco.dagnino@unige.it}

\author[fp]{Fabio Pasquali}
\ead{fabio.pasquali@unimi.it}

\begin{abstract}
The extensional quotient completion of relational doctrines provides  a common generalization of both 
the exact completion of categories with weak finite limits and 
the elementary quotient completion of existential elementary doctrines. 
In this paper, we study projective objects  in relational doctrines with quotients, characterizing those obtained though the extensional quotient completion as those admitting a projective cover. 
We apply this result to doctrines of algebras for monads on relational doctrines with quotients, describing in which cases these arise as the extensional quotient completion of their restriction to (appropriate subcategories of) free algebras. 
This extends a similar result for monadic categories over exact ones, 
covering also more examples such as monads over the category of metric spaces giving rise to variants of quantitative algebras. 
\end{abstract}

\begin{keyword}
calculus of relations \sep quotients  \sep monads

\MSC 18A32 \sep 18D05 \sep 18E10 \sep  18C20    \sep 03G15 \sep 06F35

\end{keyword}

\end{frontmatter}

\section{Introduction}
\label{sect:intro}

Every algebra is a quotient of a free algebra. This fundamental result establishes that any algebraic structure, like a monoid, group, or ring, can be presented by generators and relations.
Roughly, this means that, in order to describe an algebra, we just have to pick a set of basic elements, construct the free algebra of formal expressions built upon them, and then impose equations that define the relationships between these expressions. The desired algebra is then obtained by taking the quotient of that free algebra by the specified equations.
Furthermore, we can present homomorphisms of algebras in a similar way. 
That is, given presentations of two algebras, we can describe a homomorphism between them 
as a homomorphism between the associated free algebras which preserves the specified equations. 

This fact finds a general and uniform explanation using the language of category theory, in particular, the notions of an (Barr) exact category \cite{Barr71} and the associated exact completion \cite{CarboniA:freecl, CarboniA:regec}.
Exact categories are categories with finite limits and a well-behaved notion of quotient for internal equivalence relations. Among them are abelian categories and categories of algebras over $\Set$ (the category of sets and functions).
The exact completion is a construction that turns any category with weak finite limits into an exact one. Its objects are pairs of an object and a (pseudo)equivalence relation in the original category, and its arrows are classes of arrows\footnote{Two parallel arrows belong to the same class if, intuitively, they map equivalent elements of their domain to equivalent elements of their codomain.} between the underlying objects that preserve the equivalence relations.
A fundamental result about the exact completion states that the category of algebras for a monad on an exact category where (regular) epimorphisms split  is the exact completion of its subcategory of free algebras \cite{Vitale94}. This result encompasses categories of algebras over $\Set$ and captures, in categorical terms, the fact that algebras can be presented by generators and relations.

The distinguishing feature of exact categories that makes this analysis possible is the ability of computing quotients. Relational doctrines \cite{DagninoP23, DagninoP25tac,DagninoP25apal} have been recently introduced as a framework supporting a very general notion of quotient, together with an associated universal construction dubbed \emph{relational quotient completion}. \footnote{This is inspired by and generalizes the elementary quotient completion of an existential elementary doctrine by Maietti and Rosolini~\cite{MaiettiME:eleqc}.} As specific instances of these notions one recovers exact categories and the exact completion, as well as many other categories of equivalence relations, such as equilogical spaces and categories of assemblies for partial combinatory algebras \cite{lmcs:4302}, but also categories of metric spaces and of normed vector spaces.

In this paper, we study to which extent the aforementioned results about categories of algebras and the exact completion extend to relational doctrines of algebras and the relational quotient completion. This allows us to cover a wider range of algebras and presentations, including, for instance, variants of quantitative algebras~\cite{MardarePP16, MardarePP17, Adamek22}.
To achieve this, we study a notion of projective object for relational doctrines with quotients and characterize the relational doctrines obtained through the relational quotient completion as those admitting a projective cover. 
This generalizes the well-known theorem in \cite{CarboniA:regec} that characterizes the exact categories that are an exact completion as those with enough regular projectives.
Then, given that relational doctrines form a 2-category, we can consider monads on relational doctrines defined as monads in such a 2-category \cite{Street72}, which induce a relational doctrine of algebras \cite{DagninoP25tac}. We show that every projective cover of a relational doctrine determines a projective cover of the relational doctrine of algebras for a monad on it, consisting of those free algebras with projective generators. This shows that relational doctrines of algebras arise as relational quotient completions of a doctrine on a suitable subcategory of free algebras. Moreover, when the relational doctrine enjoys a form of the Rule of Choice, analogous to the requirement that epimorphisms split, we have that all free algebras are projective, thus obtaining a result analogous to the one for the exact completion.

The paper is organized as follows.
In \cref{sect:prelim} we recall some basic properties of relational doctrines and the relational quotient completion.
\cref{sect:proj} provides the characterization of those doctrines arising from the relational quotient completion via projective covers,  showing also that the characterisation in \cite{CarboniA:regec} of the exact completion is an instance of our theorem for a specific class of relational doctrines. 
Finally, in \cref{sect:algebra} we study doctrines of algebras for monads on relational doctrines, describing in which cases these arise as relational quotient completions of their restriction to (subcategories of) free algebras, again  extending the analogous result for the exact completion. 
To this end, we also develop some basic results about factorization systems and choice principles in relational doctrines with quotients.

\section{Preliminaries}
\label{sect:prelim} 
Relational doctrines \cite{DagninoP23,DagninoP25tac} were introduced as a functorial description of (the core fragment of) the calculus of relations \cite{Tarski41}, 
that is, the fragment whose operations are only the identity, the composition and the converse. 
They can be synthetically defined as the faithful framed bicategories \cite{Shulman08} with a dagger or, more explicitly, as follows.

\begin{definition}[Relational Doctrine]\label[def]{def:rel-doc}
A \emph{relational doctrine} consists of a base category \CC and a functor \fun{\RDoc}{\bop\CC}{\Pos} such that for every triple of objects $X,Y,Z$ in \CC there is a monotone function \[\fun{\blank\rcomp\blank}{\RDoc(X,Y)\times\RDoc(Y,Z)}{\RDoc(X,Z)}\] and an element $\rid_X \in \RDoc(X,X)$ such that 

\begin{align*} 
\relr\rcomp(\rels\rcomp\relt) &= (\relr\rcomp\rels)\rcomp\relt 
& 
\rid_X\rcomp\relr &= \relr 
&
\relr\rcomp\rid_Y &= \relr 
\\
(\relr\rcomp\rels)\rconv &= \rels\rconv \rcomp \relr\rconv 
&
\rid_X\rconv &= \rid_X
&
\relr\rdconv &= \relr 
\end{align*} 
and such that for all $\relr\in\RDoc(X,Y)$, $\rels\in\RDoc(Y,Z)$ and 
\fun{f}{A}{X}, \fun{g}{B}{Y} and \fun{h}{C}{Z}  in \CC it holds that
\[\RDoc\reidx{f,g}(\relr)\rcomp\RDoc\reidx{g,h}(\rels) \order \RDoc\reidx{f,h}(\relr\rcomp\rels)
\qquad
\rid_X \order \RDoc\reidx{f,f}(\rid_Y)
\qquad
(\RDoc\reidx{f,g}(\relr))\rconv \order \RDoc\reidx{g,f}(\relr\rconv)\] 
\end{definition}

The element $\rid_X$ is the \emph{identity} or \emph{diagonal} relation on $X$, 
$\relr\rcomp\rels$  is the \emph{relational composition} of $\relr$ followed by $\rels$, and 
$\relr\rconv$ is the \emph{converse} of the relation $\relr$.  
Note that all relational operations are lax natural transformations, but 
the operation of taking the converse, being  an involution, is actually strictly natural. 
The requirement of lax naturality becomes clear if one looks at the examples. 
For instance, in the paradigmatic example of set-theoretic relations, given by the relational doctrine \fun{\Rel}{\bop\Set}{\Pos} 
where $\Rel(X,Y)=\PP(X\times Y)$ and  $\Rel(f,g)=(f\times g)^{-1}$; it is easy to check that 
the inclusion
$\Rel\reidx{f,g}(\relr)\rcomp\Rel\reidx{g,h}(\rels)
\subseteq\Rel\reidx{f,h}(\relr\rcomp\rels)$
is an equality if and only if $g$ is surjective and, similarly, the inclusion $\rid_X 
\subseteq \RDoc\reidx{f,f}(\rid_Y)$ is an equality if and only if $f$ is injective.

Some examples of relational doctrines can be found in \cite{DagninoP23,DagninoP25tac,DagninoP25apal}, here we just consider those that will play a relevant role in the paper.

\begin{example} \label[ex]{ex:rel-doc} 
\begin{enumerate}
\item\label{ex:rel-doc:span}
Let \CC be a category with weak pullbacks. 
Denote by $\Span\CC(X,Y)$ the poset reflection of the preorder whose objects are spans in \CC between $X$ and $Y$ and where 
$\spn{X}{p_1}{A}{p_2}{Y} \order \spn{X}{q_1}{B}{q_2}{Y}$ if and only if there is an arrow \fun{f}{A}{B} such that 
$p_1 = q_1 \circ f $ and $p_2 = q_2 \circ f$. 
Given a span $\relr = \spn{X}{p_1}{A}{p_2}{Y}$ and arrows \fun{f}{X'}{X} and \fun{g}{Y'}{Y} in \CC, define 
$\Span\CC\reidx{f,g}(\relr) \in \Span\CC(X',Y')$ by one of the following equivalent diagrams: 
\[
\vcenter{\xymatrix@C=3ex@R=3ex{
&& W \ar[ld] \ar[rrdd] \ar@{}[rddd]|{wpb} && \\
& W'\ar[ld] \ar[rd] \ar@{}[dd]|{wpb} &&& \\  
X' \ar[rd]_-{f} && A \ar[ld]^-{p_1} \ar[rd]_-{p_2} && Y' \ar[ld]^-{g} \\ 
& X && Y 
}} 
\quad 
\vcenter{\xymatrix@C=3ex@R=3ex{
&& W \ar[rd] \ar[lldd] \ar@{}[lddd]|{wpb} && \\
&&& W'\ar[rd] \ar[ld] \ar@{}[dd]|{wpb} & \\  
X' \ar[rd]_-{f} && A \ar[ld]^-{p_1} \ar[rd]_-{p_2} && Y' \ar[ld]^-{g} \\ 
& X && Y 
}} 
\]
The functor \fun{\Span\CC}{\bop\CC}{\Pos} is a relational doctrine where, for 
$\relr = \spn{X}{p_1}{A}{p_2}{Y}$ and 
$\rels = \spn{Y}{q_1}{B}{q_2}{Z}$ it is 
\[
\rid_X = \vcenter{\xymatrix@R=3ex@C=3ex{
& X \ar[ld]_-{\id_X} \ar[rd]^-{\id_X} & \\ 
X && X
}} \qquad 
\relr \rcomp \rels = \vcenter{\xymatrix@R=3ex@C=3ex{
&& W \ar[ld] \ar[rd] \ar@{}[dd]|{wpb} && \\ 
& A \ar[ld]^-{p_1}\ar[rd]_-{p_2} && B \ar[ld]^-{q_1} \ar[rd]_-{q_2} & \\ 
X && Y && Z 
}}
\]
while $\relr\rconv=  \spn{Y}{p_2}{A}{p_1}{X}$%
\item\label{ex:rel-doc:jmspans}
Suppose \CC is regular  and 
denote by $\jmSpan\CC(X,Y)$ the subposet of $\Span\CC(X,Y)$ on jointly monic spans. This extends to a functor  \fun{\jmSpan\CC}{\bop\CC}{\Pos} which is a relational doctrine, where the identity relations and the converses are defined as in \cref{ex:rel-doc:span}, while to compute the relational composition of two jointly monic spans $\relr$ and $\rels$ it suffices to observe that any spans factors through a jointly monic one.
\item\label{ex:rel-doc:vrel}
Let $\RPos = \ple{[0,\infty],\ge,+,0}$ be the quantale of extended non-negative real numbers  (here addition and subtraction in $[0,\infty]$ are extensions of those in $[0,\infty)$ by the rules $\infty+a=a+\infty=\infty-a=\infty$ and $a-\infty=0$ see \cite{Lawvere73}).
A \emph{metric relation} between sets $X$ and $Y$ is just a function \fun{\alpha}{X\times Y}{[0,\infty]}. 
The functor \fun{\VRel\RPos}{\bop\Set}{\Pos} maps two sets $X$ and $Y$ to 
$\VRel\RPos(X,Y) = [0,\infty]^{X\times Y}$, the set of metric relations from $X$ to $Y$  ordered pointwise, while $\VRel\RPos\reidx{f,g}$ is the precomposition with $f\times g$. 
The identity relation, the relational composition and the converse of a relation are defined as follows: 
\begin{align*}
\rid_X(x,x') &= \begin{cases}
0 & x = x' \\
\infty & x \ne x' 
\end{cases}
\\ 
(\relr\rcomp\rels)(x,z) &= \inf_{y\in\Car Y} (\relr(x,y) + \rels(y,z)) 
\\ 
\relr\rconv(y,x) &= \relr(x,y)
\end{align*}
where $\relr\in\VRel\RPos(X,Y)$ and $\rels\in\VRel\RPos(Y,Z)$. 

\item\label{ex:rel-doc:met} 
 Following Lawvere~\cite{Lawvere73}, a (symmetric) metric space $X$ consists of a set $\Car{X}$ and a function 
$\fun{\dist_X}{\Car{X}\times\Car{X}}{[0,\infty]}$ such that $\dist_X(x,x)=0$ and $\dist_{X}(x,x')=\dist_{X}(x',x)$ and $\dist_{X}(x,x')+\dist_{X}(x',x'')\ge\dist_{X}(x,x'')$. 
A metric space $X$ is \emph{separated} if additionally $\dist_X(x,x') = 0$ implies $x = x'$. 
Thus, separated metric spaces are essentially usual metric spaces where we also admit points with an infinite distance. 
Let $\Met$ be the category of separated metric spaces and non-expansive maps, i.e., 
arrows $\fun{f}{X}{Y}$ are functions $\fun{f}{\Car{X}}{\Car{Y}}$ such that 
$\dist_X(x,x') \ge \dist_Y(f(x),f(x'))$. 
A \emph{bimodule} $\phi$ between $X$ and $Y$ 
is a function 
from $\Car{X}\times\Car{Y}$ such that 
$\dist_{X}(x,x')+\phi(x,y)+\dist_{Y}(y,y')\ge\phi(x',y')$. 
We denote by $\BM(X,Y)$ the poset of bimodules between $X$ and $Y$ with the pointwise order. 
This assignment extends to a functor  $\fun{\BM}{\bop\Met}{\Pos}$ that acts on pairs of arrows by precomposition. 
The functor $\BM$ is a relational doctrine, where relational composition and the converse of a relation are defined as in \cref{ex:rel-doc:vrel} and where the identity $\rid_X$ is $\dist_X$.  
\end{enumerate}
\end{example}

\begin{remark} 
The relational doctrines presented in \cref{ex:rel-doc:vrel,ex:rel-doc:met} of \cref{ex:rel-doc} belong to a larger class of relational doctrines, 
obtained by replacing $\RPos$ with an arbitrary commutative quantale. 
All the results we will present for these specific case easily extend to this larger class. 
\end{remark}

Relational doctrines are the objects of the 2-category \RDtn. 

A 1-arrow \oneAr{F}{\RDoc}{\SDoc} from \fun{\RDoc}{\bop\CC}{\Pos} to \fun{\SDoc}{\bop\D}{\Pos}, 
is a pair \ple{\fn{F},\lift{F}} consisting of 
a functor \fun{\fn{F}}{\CC}{\D} and a natural transformation 
\nt{\lift{F}}{\RDoc}{\SDoc\circ \bop{\fn{F}}},

\[
\xymatrix@C=7.5em@R=1em{
{\bop\CC}\ar[rd]^(.4){\RDoc}_(.4){}="P"
\ar[dd]_{\bop{\fn{F}}}^{}="F"
&\\
 & {\ct{Pos}}\\
{\bop\D}\ar[ru]_(.4){\SDoc}^(.4){}="R"&
\ar"P";"R"_{\lift{F}\kern.5ex\cdot\kern-.5ex}="b"
}
\]
preserving relational identities, composition and converse, that is 
\begin{align*}
\rid_{\fn{F}X} &= \lift{F}_{X,X}(\rid_X)\\ 
\lift{F}_{X,Y}(\relr)\rcomp \lift{F}_{Y,Z}(\rels) &= \lift{F}_{X,Z}(\relr\rcomp\rels)\\ 
\lift{F}_{X,Y}(\relr))\rconv &= \lift{F}_{Y,X}(\relr\rconv)
\end{align*}
for $\relr\in\RDoc(X,Y)$ and $\rels\in\RDoc(Y,Z)$. 

A 2-arrow \twoAr{\theta}{F}{G} is a natural transformation \nt{\theta}{\fn{F}}{\fn{G}} such that 
$\lift{F}_{X,Y} \order \SDoc\reidx{\theta_X,\theta_Y}\circ \lift{G}_{X,Y}$, for all objects $X,Y$ in the base of $\RDoc$
\[
\xymatrix@C=18em@R=1.5em{
{\bop\CC}\ar[rd]^(.4){\RDoc}_(.4){}="P"
\ar@<-1ex>@/_/[dd]_{\bop{\fn{F}}}^{}="F"\ar@<1ex>@/^/[dd]^{\bop{\fn{F'}}}_{}="G"&\\
 & {\ct{Pos}}\\
{\bop\D}\ar[ru]_(.4){\SDoc}^(.4){}="R"&
\ar@/_/"P";"R"_{\lift{F}\kern.5ex\cdot\kern-.5ex}="b"
\ar@<1ex>@/^/"P";"R"^{\kern-.5ex\cdot\kern.5ex \lift{F'}}="c"
\ar"G";"F"_{.}^{\theta\op}\ar@{}"b";"c"|{\le}}
\]

We briefly recall some basic facts on relational doctrines and refer the reader to  \cite{DagninoP23,DagninoP25apal} for details. 
Fix a relational doctrine \fun{\RDoc}{\bop{\CC}}{\Pos}. 

For every arrow \fun{f}{X}{Y} in \CC the relation $\gr{f}=\RDoc\reidx{f,\id_Y}(\rid_Y)\in\RDoc(X,Y)$ is called \emph{graph} of $f$. 
Graphs commutes with composition and identities in the sense that  
$\gr{g\circ f} = \gr{f}\rcomp\gr{g}$ and $\gr{\id_X} = \rid_X$. 

An immediate use of graphs is in writing reindexing \fun{\RDoc\reidx{f,g}}{\RDoc(X,Y)}{\RDoc(A,B)} in relational terms and, moreover, in showing that each $\RDoc\reidx{f,g}$ has a left adjoint \fun{\Ex^\RDoc\reidx{f,g}}{\RDoc(A,B)}{\RDoc(X,Y)}. Indeed 
\[
\RDoc\reidx{f,g}(\relr) = \gr{f} \rcomp \relr \rcomp \gr{g}\rconv 
\qquad 
\Ex^\RDoc\reidx{f,g}(\rels) = \gr{f}\rconv \rcomp \rels \rcomp \gr{g} 
\]
The condition $\gr{f}=\gr{g}$ (or equivalently  $\rid_X\le \gr{f}\rcomp\gr{g}\rconv$ or also $\rid_X\le \gr{g}\rcomp\gr{f}\rconv$) defines a notion of equality between the arrows of $\CC$ which is, in general, weaker then the actual equality (see \cite{DagninoP25tac,DagninoP25apal} for a more detailed analysis). 
We say that  relational doctrine $\fun{\RDoc}{\bop\CC}{\Pos}$ is \emph{extensional} when for all \fun{f,g}{X}{Y} in \CC, if $\gr{f} = \gr{g}$, then $f = g$. 

\begin{example}\label[ex]{ex:rel-doc-ex}
All relational doctrines presented in \cref{ex:rel-doc} are extensional. In particular doctrines of the form $\Span\CC$ and $\jmSpan\CC$ in \refItem{ex:rel-doc}{span} and in \refItem{ex:rel-doc}{jmspans} are extensional as in both examples $\rid_X$ is $\spn{X}{\id_X}{X}{\id_X}{X}$; the relational doctrines $\VRel\RPos$ presented in \refItem{ex:rel-doc}{vrel} is extensional as for every set $X$ the metric relation $\rid_X$ is two-valued; finally,  the relational doctrines $\BM$ presented in \refItem{ex:rel-doc}{met} is extensional because metric spaces in $\Met$ are separated: 
if $\dist_Y(f(x),y) = \gr{f}(x,y) = \gr{g}(x,y) = \dist_Y(g(x),y)$, 
then $0=\dist_Y(f(x),f(x))=\dist_Y(g(x),f(x))$ so $f(x)=g(x)$.
\end{example}

A relation  $\relr\in\RDoc(X,Y)$ is
\begin{center}
\begin{tabular}{ll}
\emph{functional} if $\relr\rconv \rcomp \relr \order \rid_Y$&\emph{injective} if $\relr\rcomp\relr\rconv \order \rid_X$\\[1ex]
\emph{total} if $\rid_X \order \relr\rcomp\relr\rconv$&\emph{surjective} if $\rid_Y \order \relr\rconv \rcomp \relr$
\end{tabular}
\end{center}

For every $f$, the graph $\gr{f}$ is functional and total. The next proposition shows that 
 functional and total relations are discretely ordered.

\begin{proposition}\label[prop]{prop:fun-ord}
Let $\RDoc$ be a relational doctrine over \CC. 
For functional and total relations $\relr,\rels\in\RDoc(X,Y)$
if $\relr\order\rels$, then $\relr = \rels$. 
\end{proposition}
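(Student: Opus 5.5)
We only need to prove $\rels \order \relr$; antisymmetry of the order then gives $\relr=\rels$. The argument is the standard allegory trick: sandwich $\rels$ between a total relation on the left and a functional one on the right, using only monotonicity of composition and converse together with the axioms of \cref{def:rel-doc}.

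First, converse is monotone. Each $\RDoc(X,Y)$ is a poset, and $\blank\rconv$ is a lax (in fact strict) natural operation between posets, so it is given by monotone maps. Hence $\relr\order\rels$ implies $\relr\rconv\order\rels\rconv$. Should monotonicity of converse need a separate justification, it is part of the data of the relational doctrine, since the operations live in $\Pos$.

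Then we compute, using associativity, the unit laws, monotonicity of $\rcomp$ in each argument, totality of $\relr$ and functionality of $\relr$:
\begin{align*}
\rels &= \rid_X\rcomp\rels \order (\relr\rcomp\relr\rconv)\rcomp\rels \order \relr\rcomp\rels\rconv\rcomp\rels \order \relr\rcomp\rid_Y = \relr .
\end{align*}
The steps are justified as follows.
\begin{itemize}
\item The first inequality uses totality of $\relr$, namely $\rid_X\order\relr\rcomp\relr\rconv$.
\item The second uses $\relr\rconv\order\rels\rconv$.
\item The third uses functionality of $\rels$, namely $\rels\rconv\rcomp\rels\order\rid_Y$.
\end{itemize}
So in fact only totality of $\relr$ and functionality of $\rels$ are needed.

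The only mildly delicate point is confirming that converse is order-preserving rather than order-reversing. I would check this directly from the structure: $\rconv$ is presented as an operation $\RDoc(X,Y)\to\RDoc(Y,X)$ in $\Pos$, hence monotone. As a sanity check, in $\Rel$ the converse is clearly monotone. Everything else is routine rewriting.
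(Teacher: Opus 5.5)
Your proof is correct and is the paper's own argument: the same chain $\rels = \rid_X\rcomp\rels \order \relr\rcomp\relr\rconv\rcomp\rels \order \relr\rcomp\rels\rconv\rcomp\rels \order \relr\rcomp\rid_Y = \relr$, using totality of $\relr$, monotonicity of converse and functionality of $\rels$. Your observation that only these two hypotheses are needed is accurate, and the paper likewise treats monotonicity of $\blank\rconv$ as part of the relational structure.
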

\begin{proof} 
If $\relr\order\rels$, then 
$\rels = \rid_X\rcomp\rels 
       \order \relr \rcomp \relr\rconv \rcomp \rels 
       \order \relr \rcomp \rels\rconv \rcomp \rels 
       \order \relr \rcomp \rid_Y 
       = \relr$.
\end{proof}

We will say that an arrow $f$ is 
\emph{$\RDoc$-injective} (resp. \emph{$\RDoc$-surjective}) or simply injective when $\RDoc$ is clear, when its graph $\gr{f}$ is injective (resp. surjective). A arrow is \emph{$\RDoc$-bijective} when it is  $\RDoc$-injective and $\RDoc$-surjective.

In general, monomorphisms (resp. epimorphisms) of $\CC$ and $\RDoc$-injections (resp. $\RDoc$-surjections) determines different classes of arrows, unless one assumes some extra structure such as in the following two propositions.

\begin{proposition}\label{prop:extomonic}In every extensional relational doctrine \fun{\RDoc}{\bop\CC}{\Pos} each $\RDoc$-injective arrow is monic, each $\RDoc$-surjective arrow is epic.
\end{proposition}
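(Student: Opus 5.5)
The plan is a direct relational computation using only graphs and their properties, followed by extensionality. Suppose \fun{m}{X}{Y} is $\RDoc$-injective, that is $\gr{m}\rcomp\gr{m}\rconv\order\rid_X$, and let \fun{f,g}{A}{X} satisfy $m\circ f = m\circ g$. By functoriality of graphs we get $\gr{f}\rcomp\gr{m} = \gr{m\circ f} = \gr{m\circ g} = \gr{g}\rcomp\gr{m}$. The aim is $\gr{f}=\gr{g}$, so that extensionality gives $f=g$.

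To obtain this, post-compose the equality with $\gr{m}\rconv$ and use injectivity of $\gr{m}$:
\[
\gr{f}\rcomp\gr{m}\rcomp\gr{m}\rconv = \gr{g}\rcomp\gr{m}\rcomp\gr{m}\rconv \order \gr{g}.
\]
Totality of $\gr{m}$ (every graph is total) gives $\rid_X\order\gr{m}\rcomp\gr{m}\rconv$. Hence $\gr{f} = \gr{f}\rcomp\rid_X \order \gr{f}\rcomp\gr{m}\rcomp\gr{m}\rconv$, using that composition is monotone. Therefore $\gr{f}\order\gr{g}$. Since both are graphs, hence functional and total, \cref{prop:fun-ord} gives $\gr{f}=\gr{g}$, and extensionality yields $f=g$. (By symmetry one could also get the reverse inequality directly, but invoking \cref{prop:fun-ord} is cleaner.)

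The surjective case is dual. Suppose \fun{e}{X}{Y} is $\RDoc$-surjective, $\rid_Y\order\gr{e}\rconv\rcomp\gr{e}$, and \fun{f,g}{Y}{B} satisfy $f\circ e=g\circ e$. Then $\gr{e}\rcomp\gr{f}=\gr{e}\rcomp\gr{g}$. Pre-compose with $\gr{e}\rconv$ to obtain
\[
\gr{f}\order\gr{e}\rconv\rcomp\gr{e}\rcomp\gr{f} = \gr{e}\rconv\rcomp\gr{e}\rcomp\gr{g}.
\]
Then bound $\gr{e}\rconv\rcomp\gr{e}\order\rid_Y$ using functionality of $\gr{e}$, which gives $\gr{f}\order\gr{g}$. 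Conclude again with \cref{prop:fun-ord} and extensionality.

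There is no serious obstacle here. The only step needing care is matching the order of composition in the diagrammatic notation: $\gr{g\circ f}=\gr{f}\rcomp\gr{g}$. This fixes on which side to compose with the converse, and hence which of injectivity/totality, or surjectivity/functionality, is used in each direction.
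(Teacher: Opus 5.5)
Your proof is correct and follows essentially the same route as the paper: rewrite $m\circ f=m\circ g$ as an equality of graphs, cancel $\gr{m}$ using injectivity (dually, $\gr{e}$ using surjectivity), and conclude by extensionality. The only cosmetic difference is the criterion for equality of graphs. The paper derives $\rid_A\order\gr{f}\rcomp\gr{g}\rconv$ on the common domain $A$ of $f,g$, via totality of $\gr{m\circ f}$, and uses that this is equivalent to $\gr{f}=\gr{g}$. You instead derive $\gr{f}\order\gr{g}$ and invoke \cref{prop:fun-ord}, which is exactly the argument underlying that equivalence.
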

\begin{proof} Take a $\RDoc$-injective arrow $\fun{m}{A}{B}$ and let $\fun{f,g}{X}{B}$ be such that $mf=mg$. Then $\gr{mf}=\gr{mg}$ so $\rid_X\le \gr{f}\rcomp\gr{m}\rcomp\gr{m}\rconv\rcomp\gr{g}\rconv$. By $\RDoc$-injectivity of $m$ one has $\rid_X\le \gr{f}\rcomp\gr{g}\rconv$, then $f=g$ by extensionality. A similar argument proves that $\RDoc$-surjective arrow are epimorphisms.
\end{proof}

\begin{proposition}\label{prop:splitepi}
In every  relational doctrine \fun{\RDoc}{\bop\CC}{\Pos} retractions in $\CC$ are $\RDoc$-surjective and sections are $\RDoc$-injective.
\end{proposition}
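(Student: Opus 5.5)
The plan is to use functoriality of graphs together with the facts that every graph is functional and total. Let $\fun{r}{Y}{X}$ and $\fun{s}{X}{Y}$ in \CC with $r\circ s = \id_X$, so $r$ is a retraction and $s$ is a section. Since graphs commute with composition and identities, we get
\[
\gr{s}\rcomp\gr{r} = \gr{r\circ s} = \gr{\id_X} = \rid_X .
\]
Both claims will follow from this single equation.

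\emph{The section $s$ is $\RDoc$-injective.} We must show $\gr{s}\rcomp\gr{s}\rconv \order \rid_X$. I would insert $\rid_Y$ in the middle and then use that $\gr{r}$ is functional, i.e.\ $\gr{r}\rconv\rcomp\gr{r}\order\rid_Y$. The first step uses totality of $\gr{r}$, namely $\rid_Y\order\gr{r}\rcomp\gr{r}\rconv$, which gives
\[
\gr{s}\rcomp\gr{s}\rconv = \gr{s}\rcomp\rid_Y\rcomp\gr{s}\rconv \order \gr{s}\rcomp\gr{r}\rcomp\gr{r}\rconv\rcomp\gr{s}\rconv .
\]
Taking converses of the displayed equation gives $\gr{r}\rconv\rcomp\gr{s}\rconv = \rid_X\rconv = \rid_X$. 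So the right-hand side equals $(\gr{s}\rcomp\gr{r})\rcomp(\gr{r}\rconv\rcomp\gr{s}\rconv) = \rid_X\rcomp\rid_X = \rid_X$. This needs only associativity, the converse laws and monotonicity of composition, which is exactly what is needed.

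\emph{The retraction $r$ is $\RDoc$-surjective.} We need $\rid_X \order \gr{r}\rconv\rcomp\gr{r}$. Using totality of $\gr{s}$, $\rid_X\order\gr{s}\rcomp\gr{s}\rconv$, we would need to bound that expression by $\gr{r}\rconv\rcomp\gr{r}$, which is awkward. A more direct route is to rewrite $\rid_X$ as $\rid_X\rconv\rcomp\rid_X$ and then use the displayed equation:
\[
\rid_X = (\gr{s}\rcomp\gr{r})\rconv\rcomp(\gr{s}\rcomp\gr{r}) = \gr{r}\rconv\rcomp\gr{s}\rconv\rcomp\gr{s}\rcomp\gr{r}.
\]
Since $\gr{s}$ is functional, $\gr{s}\rconv\rcomp\gr{s}\order\rid_Y$, and by monotonicity the right-hand side is below $\gr{r}\rconv\rcomp\gr{r}$. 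Hence $r$ is $\RDoc$-surjective.

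The same trick could also serve for injectivity: the equation $\rid_X=\gr{s}\rcomp\gr{r}\rcomp\gr{r}\rconv\rcomp\gr{s}\rconv$ already gives the reverse inequality. The first argument is the cleaner one for injectivity, so I would keep it there.

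The only point needing care is orienting the converses correctly, since $\gr{s}\in\RDoc(X,Y)$ and $\gr{r}\in\RDoc(Y,X)$. I expect no step to be a real obstacle; the whole proof rests on $\gr{r\circ s}=\gr{s}\rcomp\gr{r}$ and the functionality and totality of graphs, all stated earlier.
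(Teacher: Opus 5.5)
Your proof is correct and matches the paper's: surjectivity of the retraction comes from writing the identity as $\rid_X\rconv\rcomp\rid_X$ and using functionality of $\gr{s}$, and injectivity of the section is the paper's ``similarly'' step, which uses totality of $\gr{r}$. One small slip: you announce that you will use functionality of $\gr{r}$ in the form $\gr{r}\rconv\rcomp\gr{r}\order\rid_Y$, which is mistyped (it should be $\rid_X$), but your computation never uses it and relies on totality of $\gr{r}$ instead.
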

\begin{proof} 
Let $\fun{e}{X}{Y}$ and $\fun{r}{Y}{X}$ be such that  $\id_Y=er$, then $\rid_Y=\gr{\id_Y}=\gr{er}=\gr{r}\rcomp\gr{e}$. So $\rid_Y= \rid_Y\rconv\rcomp\rid_Y=\gr{e}\rconv\gr{r}\rconv\rcomp\gr{r}\rcomp\gr{e}$ and since $\gr{r}$ is functional $\rid_Y\le \gr{e}\rconv\rcomp\gr{e}$. Similarly one proves the $\RDoc$-injectivity of $r$. 
\end{proof}

\subsection{Quotients and the extensional quotient completion}
Let $X$ be an object of \CC. 
An \emph{$\RDoc$-equivalence relation} on $X$ is a relation 
$\eqrelr$ in $\RDoc(X,X)$ satisfying
\begin{center}
\begin{tabular}{lll}
reflexivity: $\rid_X\order\eqrelr$& symmetry: $\eqrelr\rconv\order\eqrelr$&transitivity: $\eqrelr\rcomp\eqrelr\order\eqrelr$
\end{tabular}
\end{center}
Every arrow \fun{f}{X}{Y} in \CC induces a $\RDoc$-equivalence relation on $X$, dubbed \emph{kernel} of $f$, given by $\gr{f}\rcomp\gr{f}\rconv$.
One immediately sees that for the relational doctrine \fun{\Rel}{\bop\Set}{\Pos} of set-theoretic relations the kernel of a function $f$ is precisely the set $\{(x,x')\mid f(x)=f(x')\}$. 

A \emph{quotient arrow} of $\eqrelr$ is an arrow \fun{q}{X}{W} in \CC such that 
$\eqrelr \order \gr{q}\rcomp\gr{q}\rconv$ and, 
such that for every arrow \fun{f}{X}{Z} with $\eqrelr\order\gr{f}\rcomp\gr{f}\rconv$, there is a unique arrow \fun{h}{W}{Z} with $f = h\circ q$.  
We say that a quotient arrow \fun{q}{X}{W}  for $\eqrelr$ is 
\emph{effective} if $\eqrelr = \gr{q}\rcomp\gr{q}\rconv$, i.e., its kernel coincides with the equivalence relation, 
and it is \emph{descent} if $\gr{q}\rconv\rcomp\gr{q} = \rid_W$, i.e., $q$ is sujective. 
Finally, we say that \emph{$\RDoc$ has quotients} if every $\RDoc$-equivalence relation admits an effective descent quotient arrow.

\begin{example} \label[ex]{ex:rel-doc-q} 
\begin{enumerate}
\item\label{ex:rel-doc-q:span}
For the relational doctrine $\Span\CC$ of spans over a category \CC with weak pullbacks (see \refItem{ex:rel-doc}{span})
a $\Span\CC$-equivalence relation is a pseudo-equivalence relation of \CC in the sense of \cite{CarboniA:regec}.
\item\label{ex:rel-doc-q:jmspans}
For the relational doctrine  $\jmSpan\CC$ of jointly monic spans over a regular category $\CC$ (see \refItem{ex:rel-doc}{jmspans}) a $\jmSpan\CC$-equivalence relation  is an equivalence relation in \CC and quotient arrows are regular epimorphisms. 
Moreover  \CC is exact if and only if $\jmSpan\CC$ has quotients. 
\item\label{ex:rel-doc-q:vrel}
For the relational doctrine $\VRel\RPos$ of metric relations  (see  \refItem{ex:rel-doc}{vrel}) a $\VRel\RPos$-equivalence relation $d$ over a set $X$ is a function $d:X\times X\to [0,\infty]$ suchthat $d(x,x')=0$ (reflexivity), $d(x,x')=d(x',x)$ (symmetry) and $d(x,x')+d(x',x'')\geq d(x,x'')$ (transitivity); in other words, the $\VRel\RPos$-equivalence relations over a set $X$ are the distances on $X$.
\item\label{ex:rel-doc-q:met} 
Consider the relational doctrine  $\fun{\BM}{\bop\Met}{\Pos}$ of bimodules over separated metric spaces as in \refItem{ex:rel-doc}{met}. 
A $\BM$-equivalence relation over a metric space $X$ is a distance $\rho$ over $\Car{X}$ such that $\dist_{X}(x,x')\geq \rho(x,x')$. 
Note that these metrics are not required to be separated.  Therefore, in order to build a quotient, we have to force separation. 
To this end, given a $\BM$-equivalence relation $\rho$ over $X$, we consider the equivalence relation $\sim_\rho$ on the set $\Car{X}$ defined by $x\sim_\rho x'$ if and only if $\rho(x,x') = 0$ and the metric space 
$X_\rho$ where $\Car{X_\rho} = \Car{X}/\sim_\rho$ and $\dist_{X_\rho}(\eqc{x},\eqc{x'}) = \rho(x,x')$. 
It is easy to check that $X_\rho$ is a separated metric space. 
Moreover, the quotient projection function $\fun{\pi_\rho}{\Car{X}}{\Car{X}/\sim_\rho}$ is non-expansive and gives a quotient arrow $\fun{\pi_\rho}{X}{X_\rho}$ of $\rho$ in $\BM$. 
More generally, one can prove that an arrow in $\Met$ is a quotient arrow for a $\BM$-equivalence relation if and only if its underlying function is surjective. 
The underling function of $\pi_\rho$ is trivially surjective. 
Conversely, given an arow $\fun{f}{X}{Y}$ whose underlying function is surjective, 
denote by $k_f$ the kernel of $f$, that is, the $\BM$-equivalence relation on $X$ defined by 
$k_f(x,x') = \dist_Y(f(x),f(x'))$ 
and take the quotient projection $\fun{\pi_{k_f}}{X}{X_{k_f}}$. 
The function $\fun{j}{\Car{X_{k_f}}}{\Car{Y}}$, determined by $j(\eqc{x}) = f(x)$ is well-defined, since $Y$ is separated, and non-expansive. 
It is an isomorphism such that $j\circ \pi_{k_f}=f$, proving that $f$ is a quotient arrow. 
\end{enumerate}
\end{example}

Any 1-arrow \oneAr{F}{\RDoc}{\SDoc}  in \RDtn 
preserves equivalence relations, that is, 
if $\eqrelr$ is an $\RDoc$-equivalnece relation on $X$, then $\lift{F}_{X,X}(\eqrelr)$ is an $\SDoc$-equivalence relation on $\fn{F}X$. 
We say that a 1-arrow \oneAr{F}{\RDoc}{\SDoc} preserves quotients if, 
whenever $q$ is a quotient arrow for an $\RDoc$-equivalence relation $\eqrelr$ on $X$, the arrow $\fn{F}q$ is a quotient arrow for the $\SDoc$-equivalence relation $\lift{F}_{X,X}(\eqrelr)$ on $\fn{F}X$. 
We denote by $\QRDtn$ the 2-full 2-subcategory of $\RDtn$ whose objects are relational doctrines with quotients and whose 1-arrows are those of $\RDtn$ that preserves quotients. 

We now describe the construction presented in \cite{DagninoP23, DagninoP25apal}, that takes a relational doctrine 
\fun{\RDoc}{\bop\CC}{\Pos} and produces another relational doctrine 
\fun{\EQR\RDoc}{\bop{\EQC\RDoc}}{\Pos} which is extensional and has quotients. 
The category $\EQC\RDoc$ is defined as follows: 
\begin{itemize}
\item an object is a pair \ple{X,\eqrelr} of an object $X$ in \CC and an $\RDoc$-equivalence relation on it; 
\item an arrow \fun{\eqc{f}}{\ple{X,\eqrelr}}{\ple{Y,\eqrels}} is an equivalence class of arrows \fun{f}{X}{Y} in \CC such that $\eqrelr \order \gr{f}\rcomp\eqrels\rcomp\gr{f}\rconv$, modulo the equivalence reltion stating that two such arrow $f$ and $g$ are equivalent if and only if $\eqrelr\order\gr{f}\rcomp\eqrels\rcomp\gr{g}\rconv$. 
\end{itemize}
The functor $\EQR\RDoc$ is then defined as follows:
\[
\EQR\RDoc(\ple{X,\eqrelr},\ple{Y,\eqrels}) = \{ \relr\in\RDoc(X,Y) \mid \eqrelr\rcomp\relr\rcomp\eqrels\order\relr \}
\qquad 
\EQR\RDoc\reidx{\eqc{f},\eqc{g}} = \RDoc\reidx{f,g} 
\]
The elements of the fibre $\EQR\RDoc(\ple{X,\eqrelr},\ple{Y,\eqrels})$ are called \emph{descent data} for $\eqrelr$ and $\eqrels$. 
Finally, the relational operations of $\EQR\RDoc$ are the same as in $\RDoc$ except the identity which is given by $\rid_{\ple{X,\eqrelr}} = \eqrelr$. 

The relational doctrine $\EQR\RDoc$ has quotients: 
an $\EQR\RDoc$-equivalence relation $\eqrels$ on $\ple{X,\eqrelr}$ is  an $\RDoc$-equivalence relation on $X$ such that $\eqrelr\order\eqrels$ and the quotient arrow of $\eqrels$ is \fun{\eqc{\id_X}}{\ple{X,\eqrelr}}{\ple{X,\eqrels}}.

\begin{example} \label[ex]{ex:rel-doc-eqc} 
\begin{enumerate}
\item\label{ex:rel-doc-eqc:span}
If \CC has weak finite limits, 
then $\EQR{\Span\CC}$ is isomorphic to the doctrine of jointly monic spans over  $\CC_{ex/wlex}$, the exact completion of a weakly lex category as in \cite{CarboniA:regec} (crf. also \cref{rem:MillyPino}).  We did not find a reference for this claim whose proof is a bit lengthy and, as such, it is postponed to \cref{ss:spans}. It can be safetly skipped by those readers who are not interested in it.

\item\label{ex:rel-doc-eqc:vrel}
As proved in \cite{DagninoP23,DagninoP25apal}, the relational doctrine $\EQR{\VRel\RPos}$ is equivalent in $\EQRDtn$ to the relational doctrine $\fun{\BM}{\bop{\Met}}{\Pos}$ of bimodules over  separated metric spaces.
\end{enumerate}
\end{example}

\begin{remark}\label{rem:MillyPino} 
In \cite{DagninoP25apal} we showed that relational doctrines that are cartesian (i.e the 1-arrows $\fun{!_\RDoc}{\RDoc}{1}$ and $\fun{\Delta_\RDoc}{\RDoc}{\RDoc\times \RDoc}$
have a right adjoint in \RDtn) and modular (i.e. relations satisfy $\alpha\rcomp\beta \wedge \gamma \le \alpha\rcomp(\beta\wedge \alpha\rconv\rcomp\gamma)$) are equivalent to elementary and existential doctrines as in \cite{MaiettiME:quofcm}. 
If $\RDoc$ is cartesian and modular, so is $\EQR{\RDoc}$ and, in this case, $\EQR{\RDoc}$ coincides with the completion introduced by Maietti and Rosolini in \cite{MaiettiME:eleqc,MaiettiME:quofcm}, dubbed elementary quotient completion. So in particular the examples listed in \cite{MaiettiME:eleqc,MaiettiME:quofcm} such as the $ex/wlex$ completion of a category $\CC$ with strong products, and other non-exact categories of equivalence relations, such as the category of equilogical spaces, the category of assemblies over a partial combinatory algebra, the categories of $H$-valued partial equivalence relations for a fixed locale $H$ and categories of setoids over type theories are examples of relational quotient completion for an appropriately chosen relational doctrine.
\end{remark}

We conclude the section recalling that the construction of $\EQR\RDoc$ is universal in the sense of the following proposition, whose proof can be found in \cite{DagninoP25apal}. Let $\EQRDtn$ be the full 2-subcategory of $\QRDtn$ on those relational doctrines with quotients which are also extensional, denote by $\fun{\EQRFun}{\EQRDtn}{\RDtn}$ the forgetful functor and observe that the relational quotient completion of a relational doctrine, i.e. the construction $\RDoc\mapsto\EQR\RDoc$, extends to a 2-functor $\fun{\REQFun}{\RDtn}{\EQRDtn}$.

\begin{proposition}\label{prop:eqc-mnd}The relational quotient completion determines a biadjunction $\REQFun\dashv\EQRFun$. 
\end{proposition}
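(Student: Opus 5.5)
The biadjunction $\REQFun\dashv\EQRFun$ will be established by giving a unit that is universal up to equivalence; the rest follows from the standard characterization of biadjunctions by bi-universal arrows. For the unit at $\RDoc$ over $\CC$, take the 1-arrow $\eta_\RDoc\colon\RDoc\to\EQR\RDoc$ given by $\fn{\eta}X=\ple{X,\rid_X}$, $\fn{\eta}f=\eqc f$, and $\lift{\eta}_{X,Y}=\id$. This is well defined because every $\relr\in\RDoc(X,Y)$ satisfies $\rid_X\rcomp\relr\rcomp\rid_Y\order\relr$, so it is a descent datum for the identities. It preserves identities, composition and converse on the nose.

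Next, fix $\SDoc$ over $\D$ in $\EQRDtn$ and a 1-arrow $F\colon\RDoc\to\SDoc$. The extension $F^\sharp\colon\EQR\RDoc\to\SDoc$ is built object by object.
\begin{enumerate}
\item On objects, $F^\sharp\ple{X,\eqrelr}$ is the codomain of a chosen effective descent quotient $q_{\eqrelr}\colon\fn FX\to W$ of the $\SDoc$-equivalence relation $\lift F(\eqrelr)$.
\item On arrows, if $\eqc f\colon\ple{X,\eqrelr}\to\ple{Y,\eqrels}$, then $\eqrelr\order\gr f\rcomp\eqrels\rcomp\gr f\rconv$ gives $\lift F(\eqrelr)\order\gr{q_\eqrels\fn Ff}\rcomp\gr{q_\eqrels\fn Ff}\rconv$, using effectiveness and the facts that $\lift F$ preserves composition and that $\lift F(\gr f)=\gr{\fn Ff}$. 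The universal property of $q_\eqrelr$ then yields a unique $h$ with $h\circ q_\eqrelr=q_\eqrels\circ\fn F f$.
\item Independence of the representative: equivalent $f,g$ induce arrows $h,h'$ with equal graphs. This follows by cancelling $\gr{q_\eqrelr}$ using the descent property, and extensionality of $\SDoc$ then gives $h=h'$.
\item On relations, set $\lift{F^\sharp}(\relr)=\gr{q_\eqrelr}\rconv\rcomp\lift F(\relr)\rcomp\gr{q_\eqrels}$. Preservation of identity holds because $\gr{q}\rconv\rcomp\lift F(\eqrelr)\rcomp\gr q=\gr q\rconv\rcomp\gr q\rcomp\gr q\rconv\rcomp\gr q=\rid$ by effectiveness and descent. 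Preservation of composition uses $\gr q\rcomp\gr q\rconv=\lift F(\eqrelr)$ together with the descent condition on $\relr$. Naturality follows from the defining square of $h$.
\end{enumerate}
Then $F^\sharp\circ\eta_\RDoc\cong F$ via the quotient arrows $q_{\rid_X}$, which are isomorphisms. Indeed, $\lift F(\rid_X)=\rid$, and in an extensional doctrine the quotient of the identity relation is an iso.

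The remaining step is bi-universality: $-\circ\eta_\RDoc\colon\EQRDtn(\EQR\RDoc,\SDoc)\to\RDtn(\RDoc,\SDoc)$ must be an equivalence of categories. Essential surjectivity was shown above. For full faithfulness, take $G,G'\colon\EQR\RDoc\to\SDoc$ preserving quotients and a 2-arrow $\theta\colon G\eta\Rightarrow G'\eta$. Every $\ple{X,\eqrelr}$ is the quotient of $\ple{X,\rid_X}$ under $\eqc{\id_X}$, and $G,G'$ preserve this quotient. So $\theta_X$ descends uniquely to $\ple{X,\eqrelr}$. For this, one checks that $\fn{G'}\eqc{\id}\circ\theta_X$ respects $\lift G(\eqrelr)$, using the 2-cell inequality $\lift G\order\SDoc\reidx{\theta,\theta}\lift{G'}$.

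I expect this full-faithfulness argument, and in particular showing that the descended components are natural and still satisfy the 2-cell inequality, to be the main obstacle. I would handle it by precomposing with the surjective $\gr{q}$ and cancelling it via descent.
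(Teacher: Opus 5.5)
Your construction is the right one. The paper does not prove this statement itself; it defers to \cite{DagninoP25apal}. The way it uses the result in the proof of \cref{thm:proj-obj} (a counit $Q^\RDoc$, and transposes $F^\sharp=Q^\RDoc\circ\EQR F$ with $\lift{F^\sharp}(\relr)=\gr{q}\rconv\rcomp\lift F(\relr)\rcomp\gr{q'}$) shows that your $F^\sharp$ is exactly the transpose used there. Presenting the biadjunction through bi-universal arrows rather than unit and counit is only a repackaging. Your checks of well-definedness and of the relational operations go through. So does your plan for full faithfulness: descend $\theta_X$ along the preserved quotient $\fn G\eqc{\id_X}$, then obtain naturality and the 2-cell inequality by cancelling this epic, surjective arrow.

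There is, however, one step you skip. You say essential surjectivity ``was shown above'', but for $F^\sharp$ to lie in $\EQRDtn(\EQR\RDoc,\SDoc)$ it must preserve quotients, and you never check this. This is not cosmetic, since your full-faithfulness argument relies precisely on quotient preservation. The check is short. Up to isomorphism, a quotient arrow in $\EQR\RDoc$ is $\eqc{\id_X}\colon\ple{X,\eqrelr}\to\ple{X,\eqrels}$ with $\eqrelr\order\eqrels$, and $F^\sharp\eqc{\id_X}$ is the unique $h$ with $h\circ q_\eqrelr=q_\eqrels$.
\begin{itemize}
\item Descent of $q_\eqrelr$ gives $\gr h=\gr{q_\eqrelr}\rconv\rcomp\gr{q_\eqrels}$, hence $\lift{F^\sharp}(\eqrels)=\gr{q_\eqrelr}\rconv\rcomp\gr{q_\eqrels}\rcomp\gr{q_\eqrels}\rconv\rcomp\gr{q_\eqrelr}=\gr h\rcomp\gr h\rconv$.
\item Suppose $\lift{F^\sharp}(\eqrels)\order\gr k\rcomp\gr k\rconv$. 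From $\lift F(\eqrels)=\lift F(\eqrelr)\rcomp\lift F(\eqrels)\rcomp\lift F(\eqrelr)$ and $\lift F(\eqrelr)=\gr{q_\eqrelr}\rcomp\gr{q_\eqrelr}\rconv$ you get $\lift F(\eqrels)\order\gr{k\circ q_\eqrelr}\rcomp\gr{k\circ q_\eqrelr}\rconv$.
\item So there is $m$ with $m\circ q_\eqrels=k\circ q_\eqrelr$. Since $q_\eqrelr$ is epic, $m\circ h=k$, and $m$ is unique because $h$ is epic.
\end{itemize}

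Two minor points. First, $q_{\rid_X}$ is an isomorphism in any relational doctrine, because $\id$ is always a quotient arrow of $\rid$, so extensionality is not needed there. You should still note that $q_{\rid_X}$ and its inverse satisfy the 2-cell inequality; it holds with equality, since $\gr{q_{\rid_X}}\rcomp\gr{q_{\rid_X}}\rconv=\rid$ and $\gr{q_{\rid_X}}\rconv\rcomp\gr{q_{\rid_X}}=\rid$. Second, the statement names the left biadjoint as the given 2-functor $\REQFun$. So observe that for a 1-arrow $F\colon\RDoc\to\RDoc'$ one has $\EQR F\circ\eta_\RDoc=\eta_{\RDoc'}\circ F$ with $\EQR F$ quotient preserving. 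Hence the pseudofunctor produced by your bi-universal arrows agrees with $\REQFun$ on 1-arrows up to isomorphism.
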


\subsection{Quotient completion of doctrines of spans}\label{ss:spans}

Here we prove the claim in \refItem{ex:rel-doc-eqc}{span} stating that, for a category $\CC$ with weak finite limits, $\EQR{\Span\CC}$ is isomorphic in $\EQRDtn$ to $\fun{\jmSpan{\CC_{ex/wlex}}}{\bop{(\CC_{ex/wlex})}}{\Pos}$. 
As already mentioned, we are including this full proof for sake of completeness, as we have not found it in the literature. 
However, it is not essential for understanding the rest of the paper.

First of all, let us notice that 
the base of $\EQR{\Span\CC}$ is precisely $\CC_{ex/wlex}$. 
To show that also the doctrines are isomorphic we  need some instrumental lemmas.

\begin{lemma}\label{l:isofrecce}
Suppose $\RDoc$ and $\SDoc$ are relational doctrines over the same base $\CC$ and \oneAr{F}{\RDoc}{\SDoc} is a 1-arrow where $\fun{\fn{F}}{\CC}{\CC}$ is the identity on $\CC$. If for every $X,Y$ in $\CC$ the map $\fun{\lift{F}_{X,Y}}{\RDoc(X,Y)}{\SDoc(X,Y)}$ is an isomorphism in $\Pos$, then $\RDoc$ and $\SDoc$ are isomorphic.
\end{lemma}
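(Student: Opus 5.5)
The plan is to build the inverse 1-arrow explicitly and check that it composes with $F$ to the identity in both directions. Put $G=\ple{\fn{G},\lift{G}}$ with $\fn{G}=\id_\CC$ and $\lift{G}_{X,Y}=(\lift{F}_{X,Y})^{-1}\colon\SDoc(X,Y)\to\RDoc(X,Y)$, which is monotone because $\lift{F}_{X,Y}$ is an isomorphism in $\Pos$. Since $\fn{F}$ is the identity, $\SDoc\circ\bop{\fn{F}}=\SDoc$, so $\lift{F}$ is a natural transformation $\RDoc\to\SDoc$ whose components are all invertible. It is therefore a natural isomorphism, and its componentwise inverse $\lift{G}$ is natural $\SDoc\to\RDoc$. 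To spell this out: for $f\colon A\to X$ and $g\colon B\to Y$, apply $\lift{G}_{A,B}$ to $\lift{F}_{A,B}\circ\RDoc\reidx{f,g}=\SDoc\reidx{f,g}\circ\lift{F}_{X,Y}$ on the left and precompose with $\lift{G}_{X,Y}$ on the right. This gives $\RDoc\reidx{f,g}\circ\lift{G}_{X,Y}=\lift{G}_{A,B}\circ\SDoc\reidx{f,g}$.

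Next I check that $G$ preserves the relational structure, transporting each axiom for $F$ through the bijections.

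\begin{itemize}
\item \emph{Identities.} We have $\lift{F}_{X,X}(\rid^\RDoc_X)=\rid^\SDoc_X$, and applying $\lift{G}_{X,X}$ gives $\lift{G}_{X,X}(\rid^\SDoc_X)=\rid^\RDoc_X$.
\item \emph{Composition.} Given $\relr'\in\SDoc(X,Y)$ and $\rels'\in\SDoc(Y,Z)$, write $\relr'=\lift{F}(\relr)$ and $\rels'=\lift{F}(\rels)$. Then $\relr'\rcomp\rels'=\lift{F}(\relr\rcomp\rels)$, so $\lift{G}(\relr'\rcomp\rels')=\relr\rcomp\rels=\lift{G}(\relr')\rcomp\lift{G}(\rels')$.
\item \emph{Converse.} The same argument applied to $\lift{F}(\relr)\rconv=\lift{F}(\relr\rconv)$ gives $\lift{G}(\relr')\rconv=\lift{G}(\relr'\rconv)$.
\end{itemize}

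Hence $G$ is a 1-arrow of $\RDtn$. Composition of 1-arrows is given by composing the functors and pasting the natural transformations, i.e.\ $\lift{(G\circ F)}_{X,Y}=\lift{G}_{\fn{F}X,\fn{F}Y}\circ\lift{F}_{X,Y}$. With this description, $G\circ F=\ple{\id_\CC,\id}=\mathrm{Id}_\RDoc$ and $F\circ G=\mathrm{Id}_\SDoc$ hold strictly, so $\RDoc$ and $\SDoc$ are isomorphic in $\RDtn$. If the intended ambient 2-category is $\EQRDtn$ or $\QRDtn$, I also need $F$ and $G$ to preserve quotients. This follows because isomorphisms preserve all structure: kernels and graphs correspond under $\lift{F}$, since $\lift{F}(\gr{f})=\gr{f}$ by naturality and identity preservation.

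I do not expect a real obstacle here. The only point needing care is naturality of the inverse, which is the standard fact that a natural transformation with invertible components is a natural isomorphism.
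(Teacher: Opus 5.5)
Your proposal is correct and follows the paper's proof: the inverse 1-arrow is given by the identity functor together with the componentwise inverses of $\lift{F}$, and preservation of identities, composition and converse is transported through these bijections. The paper leaves implicit two details you make explicit: that the componentwise inverse is natural, and that graphs, kernels and hence quotient arrows correspond under $\lift{F}$, which matters because the lemma is used to obtain an isomorphism in $\EQRDtn$.
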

\begin{proof}For every $X,Y$, call $G_{X,Y}$ the inverse of $\lift{F}_{X,Y}$ in $\Pos$.  The following equations 
\begin{align*} 
G_{X,X}(\rid_X)&=G_{X,X}(\lift{F}_{X,X}(\rid_X))=\rid_X\\
G_{X,Z}(\relr\rcomp\rels)&=G_{X,Z}(\lift{F}_{X,Y}G_{X,Y}(\relr)\rcomp \lift{F}_{Y,Z}G_{Y,Z}(\rels))\\
&=G_{X,Z}\lift{F}_{X,Z}(G_{X,Y}(\relr)\rcomp G_{Y,Z}(\rels))=G_{X,Y}(\relr)\rcomp G_{Y,Z}(\rels)\\
G_{X,Y}(\relr\rconv)&= G_{X,Y}((\lift{F}_{X,Y}G_{X,Y}(\relr))\rconv)=G_{X,Y}\lift{F}_{X,Y}((G_{X,Y}(\relr))\rconv)=G_{X,Y}(\relr)\rconv
\end{align*} 
 show that the inverses commute with the relational operations, as such they determine a 1-arrow of relational doctrines $\fun{}{\SDoc}{\RDoc}$.
\end{proof}

Let $\ct{E}$ be an exact category and $\ct{P}$ a regular projective cover of it. Denote by $\Span{\ct{E}}_{\ct{P}}(X,Y)$ the subposet of $\Span{\ct{E}}(X,Y)$ on those spans whose vertex is in $\ct{P}$. If $\relr=\spn{X}{f}{A}{g}{Y}$ is a span  in $\Span{\ct{E}}(X,Y)$ and $\fun{q}{P}{A}$ and $\fun{q'}{P'}{A}$ are regular epimorphisms, then,  $q$ and $q'$ factors through each other, so the spans $\spn{X}{fq}{P}{gq}{Y}$ and $\spn{X}{fq'}{P'}{gq'}{Y}$ represent the same element in $\Span{\ct{E}}_{\ct{P}}(X,Y)$. 
We denote it by $\lift{\coveredarr}(\relr)$.

Note also that a regular projective cover of a pullback in $\ct{E}$ determines a weak pullback, i.e.  given $\fun{e}{X}{Y}$ and $\fun{f}{A}{Y}$ the squares $f\circ (f^*e\circ q)=e\circ (e^*f\circ q)$, where $q$ is a regular epimorphism with a regular projective domain, is a weak pullback. 
As a consequence of this and since regular epimorphisms compose, for every $f$ and $e$ as above there is a weak pullback of $e$ along $f$ 
which is a regular epimorphism whose domain is in $\ct{P}$.
This suffices to show that (equivalence classses of) spans with a regular projective vertex form a relational doctrine $\fun{\Span{\ct{E}}_{\ct{P}}}{\bop{\ct{E}}}{\Pos}$ where the composition of $\relr$ and $\rels$ is $\lift{\coveredarr}(\relr\rcomp\rels)$ and the identity relation over $A$ is $\lift{\coveredarr}(\rid_A)$ and $(\Span{\ct{E}}_{\ct{P}})_{f,g}= \lift{\coveredarr}(\Span{\ct{E}}_{f,g})$. 
Moreover the way we defined the relational operations of $\Span{\ct{E}}_{\ct{P}}$ ensures that \oneAr{\coveredarr}{\jmSpan{\ct{E}}}{\Span{\ct{E}}_{\ct{P}}} is indeed a 1-arrow as depicted below 
\[
\xymatrix@C=7.5em@R=1em{
{\bop{\ct{E}}}\ar[rd]^(.4){\jmSpan{\ct{E}}}_(.4){}="P"
\ar[dd]_{\bop{\fn{\coveredarr}}}^{}="F"
&\\
 & {\ct{Pos}}\\
{\bop{\ct{E}}}\ar[ru]_(.4){\Span{\ct{E}}_{\ct{P}}}^(.4){}="R"&
\ar"P";"R"_{\lift{\coveredarr}\kern.5ex\cdot\kern-.5ex}="b"
}
\]
 where \fun{\fn{\coveredarr}}{\ct{E}}{\ct{E}} is the identity functor on $\ct{E}$.

The following lemma is a stronger version of Lemma 35 in \cite{CarboniA:regec}. 

\begin{lemma} \label{l:analogo0} 
The 1-arrow \oneAr{\coveredarr}{\jmSpan{\ct{E}}}{\Span{\ct{E}}_{\ct{P}}} 
is an isomorphism.
\end{lemma}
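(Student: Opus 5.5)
By \cref{l:isofrecce} it suffices to show that every component $\fun{\lift{\coveredarr}_{X,Y}}{\jmSpan{\ct{E}}(X,Y)}{\Span{\ct{E}}_{\ct{P}}(X,Y)}$ is an isomorphism in $\Pos$. The underlying functor is the identity on $\ct{E}$, and $\coveredarr$ is already a 1-arrow. So I would construct an explicit monotone inverse and check that both composites are identities.

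The inverse $\fun{m_{X,Y}}{\Span{\ct{E}}_{\ct{P}}(X,Y)}{\jmSpan{\ct{E}}(X,Y)}$ sends a span $\spn{X}{f}{P}{g}{Y}$ with $P\in\ct{P}$ to the image of $\ple{f,g}\colon P\to X\times Y$. Concretely, take the regular epi–mono factorization $P\xrightarrow{e} I\rightarrowtail X\times Y$ in the regular category $\ct{E}$ and read off the jointly monic span $\spn{X}{}{I}{}{Y}$.
\begin{itemize}
\item This is well defined on the poset reflection. If $\spn{X}{f}{P}{g}{Y}\order\spn{X}{f'}{P'}{g'}{Y}$ via some $h\colon P\to P'$, then the image of $\ple{f,g}=\ple{f',g'}\circ h$ is contained in the image of $\ple{f',g'}$. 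This gives monotonicity, and hence also well-definedness.
\item For a jointly monic span with vertex $A$, $\lift{\coveredarr}$ precomposes with a regular epi $q\colon P\to A$, $P\in\ct{P}$. Taking the image of $\ple{f,g}\circ q$ returns $A$, because $q$ is a regular epi and $\ple{f,g}$ is monic. Hence $m\circ\lift{\coveredarr}=\id$.
\item For $\relr=\spn{X}{f}{P}{g}{Y}$, $\lift{\coveredarr}(m(\relr))$ is represented by $P'\xrightarrow{q} I\rightarrowtail X\times Y$ with $q$ a regular epi and $P'\in\ct{P}$.
\end{itemize}

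It remains to show that this last span equals $\relr$ in the poset $\Span{\ct{E}}_{\ct{P}}(X,Y)$, by exhibiting morphisms of spans in both directions. Since $P'$ is regular projective and $e\colon P\to I$ is a regular epi, $q$ lifts through $e$ to some $k\colon P'\to P$ with $e\circ k=q$. Then $\ple{f,g}\circ k$ equals the mono composed with $q$, giving one inequality. Conversely, since $P$ is projective and $q$ is a regular epi, $e$ lifts through $q$ to some $l\colon P\to P'$, giving the other inequality. So $\lift{\coveredarr}\circ m=\id$.

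The main obstacle I expect is bookkeeping rather than mathematics. One must make sure $\lift{\coveredarr}$ is independent of the chosen covering, which was observed before the statement. One must also check that the order on $\Span{\ct{E}}_{\ct{P}}$ is the restricted span preorder, so that mutual factorization really yields equality, and that the order on $\jmSpan{\ct{E}}$ coincides with subobject inclusion, which holds because of joint monicity. Projectivity of the vertices is used essentially in the last step, and this is exactly where working with vertices in $\ct{P}$, instead of arbitrary spans, is needed.
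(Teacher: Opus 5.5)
Your proof is correct and follows the same route as the paper: you use \cref{l:isofrecce} to reduce to showing that each $\lift{\coveredarr}_{X,Y}$ is an order-isomorphism, with the inverse given by the regular epi--mono (image) factorization of a span. You spell out the verifications the paper leaves implicit, and your final two-way lifting step is just the independence of $\lift{\coveredarr}$ from the chosen cover noted before the lemma, so you could shorten it by taking $e\colon P\to I$ itself as the cover of $I$.
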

\begin{proof} 
After \cref{l:isofrecce}, we only need to show that $\fun{\lift{\coveredarr}_{X,Y}}{\jmSpan{\ct{E}}(X,Y)}{\Span{\ct{E}}_{\ct{P}}(X,Y)}$ has an inverse in $\Pos$. This is given noticing that in an exact category every span factors through a jointly monic one, where the mediating arrow is a regular epimorphism.
\end{proof}

Let \CC be a category with weak finite limits and consider the relational doctrine $\fun{\Span\CC}{\bop\CC}{\Pos}$. 
Denote by  $\ct{C}_\Delta$ the regular projective cover of $\CC_{ex/wlex}$ determined by the objects of the form $(A,\rid_A)$. The following is an obvious corollary of \cref{l:analogo0} 

\begin{corollary}\label{c:analogo}
The doctrines $\Span{\ct{C_{ex/wlex}}}_{\ct{C}_\Delta}$ and $\jmSpan{\ct{C_{ex/wlex}}}$ are isomorphic.
\end{corollary}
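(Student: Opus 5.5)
This is an instance of \cref{l:analogo0} with $\ct{E}=\CC_{ex/wlex}$ and $\ct{P}=\ct{C}_\Delta$. The plan is to check that the hypotheses of that lemma hold and then read off the isomorphism.

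The first step is to recall from \cite{CarboniA:regec} that $\CC_{ex/wlex}$ is exact whenever \CC has weak finite limits.

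The second step is to show that the objects $\ple{A,\rid_A}$ form a regular projective cover. For every object $\ple{X,\eqrelr}$, the arrow $\fun{\eqc{\id_X}}{\ple{X,\rid_X}}{\ple{X,\eqrelr}}$ is the quotient arrow of $\eqrelr$ in $\EQR{\Span\CC}$. By the identification of $\EQR{\Span\CC}$-quotients with coequalizers of pseudo-equivalence relations, this arrow is a regular epimorphism of $\CC_{ex/wlex}$. Regular projectivity of $\ple{A,\rid_A}$ is also standard \cite{CarboniA:regec}. Given a regular epi $\fun{\eqc{e}}{\ple{Y,\eqrels}}{\ple{Z,\tau}}$ and an arrow $\fun{\eqc{f}}{\ple{A,\rid_A}}{\ple{Z,\tau}}$, surjectivity of $e$ yields a span from $A$ witnessing a lift. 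Because the domain carries the trivial relation, this span is a single arrow of \CC. Closure of $\ct{C}_\Delta$ under the needed weak pullbacks is not required by \cref{l:analogo0}, because that lemma only uses covers by projectives.

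The last step is to apply \cref{l:analogo0}. The 1-arrow $\coveredarr$ then gives the isomorphism $\jmSpan{\CC_{ex/wlex}}\cong\Span{\CC_{ex/wlex}}_{\ct{C}_\Delta}$. The main point needing care is the second step, which is essentially the known fact from \cite{CarboniA:regec} that $\ct{C}_\Delta$ is a projective cover. I would cite it rather than reprove it.
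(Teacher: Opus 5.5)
Your proposal is correct and follows the paper's route: the corollary is just \cref{l:analogo0} instantiated with $\ct{E}=\CC_{ex/wlex}$ and $\ct{P}=\ct{C}_\Delta$, taking from \cite{CarboniA:regec} that $\CC_{ex/wlex}$ is exact and that the objects $\ple{A,\rid_A}$ form a regular projective cover. Your extra sketch of these hypotheses goes beyond what the paper writes, though it stays informal, and you are right that no closure of $\ct{C}_\Delta$ under weak pullbacks is needed.
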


The final stage of the proof consists in showing that $\Span{\ct{C_{ex/wlex}}}_{\ct{C}_\Delta}$ and $\EQR{\Span\CC}$ are isomorphic.
There is a morphism of doctrines \oneAr{i}{\EQR{\Span\CC}}{\Span{\ct{C_{ex/wlex}}}_{\ct{C}_\Delta}} where $\fn{i}$ is the identity functor on $\ct{C_{ex/wlex}}$ and the component of $\lift{i}$ at $\ple{X,\eqrelr}$ and $\ple{Y,\eqrels}$ sends a span $\spn{X}{f}{A}{g}{Y}$ to $\spn{\ple{X,\eqrelr}}{[f]}{\ple{A,\rid_A}}{[g]}{\ple{Y,\eqrels}}$. 

To build the inverse of $i$ the following lemma will be useful.

\begin{lemma} \label{l:chiudo} 
Let $\RDoc$ be a relational doctrine over $\CC$. For every equivalence relations $\eqrelr, \eqrels$ over $X$ and $Y$, respectively, the assignment $\relr\mapsto \eqrelr\rcomp\relr\rcomp\eqrels$ determines a monotone function $\fun{}{\RDoc(X,Y)}{\EQR\RDoc(\ple{X,\eqrelr},\ple{Y,\eqrels})}$.
\end{lemma}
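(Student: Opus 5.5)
We have to check two things: that $\eqrelr\rcomp\relr\rcomp\eqrels$ really lies in the fibre $\EQR\RDoc(\ple{X,\eqrelr},\ple{Y,\eqrels})$, and that the assignment is monotone. Both follow from the axioms of \cref{def:rel-doc} together with the equivalence-relation properties of $\eqrelr$ and $\eqrels$.

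\emph{Landing in descent data.} Set $\relr' = \eqrelr\rcomp\relr\rcomp\eqrels$. By definition of $\EQR\RDoc$ we must show $\eqrelr\rcomp\relr'\rcomp\eqrels\order\relr'$. Using associativity of $\rcomp$,
\[
\eqrelr\rcomp\relr'\rcomp\eqrels = (\eqrelr\rcomp\eqrelr)\rcomp\relr\rcomp(\eqrels\rcomp\eqrels).
\]
Transitivity gives $\eqrelr\rcomp\eqrelr\order\eqrelr$ and $\eqrels\rcomp\eqrels\order\eqrels$. Since composition is monotone in each argument, the right-hand side is at most $\eqrelr\rcomp\relr\rcomp\eqrels = \relr'$, as required.

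Only transitivity is needed here. Reflexivity would additionally give the reverse inequality, so $\relr'$ is in fact a fixed point, and it follows that the map is idempotent. Symmetry plays no role.

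\emph{Monotonicity.} Suppose $\relr\order\relr''$ in $\RDoc(X,Y)$. Then $\eqrelr\rcomp\relr\rcomp\eqrels\order\eqrelr\rcomp\relr''\rcomp\eqrels$, again by monotonicity of $\blank\rcomp\blank$. The order on $\EQR\RDoc(\ple{X,\eqrelr},\ple{Y,\eqrels})$ is the one inherited from $\RDoc(X,Y)$, so this is exactly monotonicity of the assignment.

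There is no real obstacle; the only point needing care is to invoke transitivity of both $\eqrelr$ and $\eqrels$ for the descent-data inequality.
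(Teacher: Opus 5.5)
Your proof is correct and follows the paper's argument: monotonicity comes from monotonicity of $\blank\rcomp\blank$, and membership in the descent-data fibre comes from transitivity of $\eqrelr$ and $\eqrels$. Your side remark is also right: with reflexivity, $\eqrelr\rcomp\relr\rcomp\eqrels$ is actually a fixed point, which makes the map idempotent.
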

\begin{proof} 
Monotonicity follows from monotonicity of relational composition. Recall that $\EQR\RDoc(\ple{X,\eqrelr},\ple{Y,\eqrels})= \{ \relr\in\RDoc(X,Y) \mid \eqrelr\rcomp\relr\rcomp\eqrels\order\relr \}$, thus $\eqrelr\rcomp\relr\rcomp\eqrels$ belongs to it by transitivity of $\eqrelr$ and $\eqrels$.
\end{proof}

Observe that an element  in $\Span{\CC}(X,Y)$, represented by $\spn{X}{f}{A}{g}{Y}$, can be written as $\gr{f}\rconv\rcomp\gr{g}$ where the relational operations are those of $\Span{\CC}$ and thus $\gr{f}$ is represented by the span 
$\spn{A}{\id_A}{A}{f}{X}$ and similarly for $\gr{g}$. 
Moreover, any span $\spn{\ple{X,\eqrelr}}{[f]}{\ple{A,\rid_A}}{[g]}{\ple{Y,\eqrels}}$ 
in $\ct{C_{ex/wlex}}$ is composed by legs, whose representative $f$ and $g$ are arrows of $\CC$ such that $\rid_A\order\gr{f}\rcomp\eqrelr\rcomp\gr{f}\rconv$ and $\rid_A\order\gr{g}\rcomp\eqrels\rcomp\gr{g}\rconv$ in $\Span{\CC}(A,A)$.

Thus, a candidate to be the inverse \oneAr{i'}{\Span{\ct{C_{ex/wlex}}}_{\ct{C}_\Delta}}{\EQR{\Span\CC}} 
of $i$ can be find choosing, for each $\spn{\ple{X,\eqrelr}}{[f]}{\ple{A,\rid_A}}{[g]}{\ple{Y,\eqrels}}$ in $\Span{\ct{C_{ex/wlex}}}_{\ct{C}_\Delta}(\ple{X,\eqrelr},\ple{Y,\eqrels})$, a span $\spn{X}{f}{A}{g}{Y}$ and use \cref{l:chiudo} to map $\gr{f}\rconv\rcomp\gr{g}$ in 
$\Span\CC(X,Y)$ to $\eqrelr\rcomp\gr{f}\rconv\rcomp\gr{g}\rcomp\eqrels$ in $\EQR{\Span{\CC}}$
$(\ple{X,\eqrelr},\ple{Y,\eqrels})$. This assignment does not depend on the choice of the representative: if  $\spn{X}{f'}{A}{g'}{Y}$ is another choice, then $\rid_A\order\gr{f}\rcomp\eqrelr\rcomp\gr{f'}\rconv$ and $\rid_A\order\gr{g'}\rcomp\eqrels\rcomp\gr{g}\rconv$ in $\Span{\CC}(A,A)$; equivalently, $\gr{f}\rconv\order\eqrelr\rcomp\gr{f'}\rconv$ and $\gr{g}\order\gr{g'}\rcomp\eqrels$, so
\[
\eqrelr\rcomp\gr{f}\rconv\rcomp\gr{g}\rcomp\eqrels
\order
\eqrelr\rcomp\eqrelr\rcomp\gr{f'}\rconv\rcomp\gr{g'}\rcomp\eqrels\rcomp\eqrels
=
\eqrelr\rcomp\gr{f'}\rconv\rcomp\gr{g'}\rcomp\eqrels
\]
analogously one shows that $\eqrelr\rcomp\gr{f'}\rconv\rcomp\gr{g'}\rcomp\eqrels\order\eqrelr\rcomp\gr{f}\rconv\rcomp\gr{g}\rcomp\eqrels$. This proves that $i'$ is well defined function.

 It is also monotone: take a bigger span $\spn{\ple{X,\eqrelr}}{[h]}{\ple{B,\rid_B}}{[l]}{\ple{Y,\eqrels}}$, i.e. suppose that there is $\fun{k}{A}{B}$ with $[hk]=[f]$ and $[lk]=[g]$. In terms of the relational operations of $\Span{\CC}$ it is $\gr{f}\rconv\order \eqrelr\rcomp\gr{h}\rcomp\gr{k}\rconv$
 and $\gr{g}\order \gr{k}\rcomp\gr{l}\rcomp\eqrels$. So
 \[
 \eqrelr\rcomp\gr{f}\rconv\rcomp\gr{g}\rcomp\eqrels
 \order
 \eqrelr\rcomp\eqrelr\rcomp\gr{h}\rcomp\gr{k}\rconv\rcomp\gr{k}\rcomp\gr{l}\rcomp\eqrels\rcomp\eqrels
 \order
 \eqrelr\rcomp\gr{h}\rcomp\gr{l}\rcomp\eqrels
 \]
where we use transitivity of $\eqrelr$ and $\eqrels$ and functionality of $\gr{k}$. 
\cref{l:isofrecce} tell us that if we show that $i'$ is the inverse of $i$ as a monotone function, then $i'$ commutes with relational operations, providing the inverse of $i$ in $\RDtn$.

Since elements of $\EQR{\Span\CC}(\ple{X,\eqrelr},\ple{Y,\eqrels})$ are descent data, the composition $i'i$ is the identity. Moreover, by reflexivity of $\eqrelr$ and $\eqrels$, it holds that  $\relr\order \eqrelr\rcomp\relr\rcomp\eqrels$, so $\id\order ii'$. It remains to show that $ii'\order\id$.
 Recall that $\eqrelr\rcomp\gr{f}\rconv\rcomp\gr{g}\rcomp\eqrels$ is given by the two external legs of the following diagram of \CC

\[
\vcenter{\xymatrix@C=3ex@R=3ex{
&&& W \ar[ld]_-{w_1} \ar[rd]^-{w_2} \ar@{}[dd]|{wpb} &&& \\
&& T\ar[ld]_-{t_1} \ar[rd]|{t_2} \ar@{}[dd]|{wpb} && V\ar[ld]|{v_1} \ar[rd]^-{v_2} \ar@{}[dd]|{wpb}&& \\  
&R\ar[ld]_-{\eqrelr_1}\ar[rd]_-{\eqrelr_2} && A \ar[ld]^-{f} \ar[rd]_-{g} && S \ar[ld]^-{\eqrels_1}\ar[rd]^-{\eqrels_2}& \\ 
X&& X && Y &&Y
}}
\]
To show that $i(\eqrelr\rcomp\gr{f}\rconv\rcomp\gr{g}\rcomp\eqrels)\order i(\gr{f}\rconv\rcomp\gr{g})$ we need to find $\fun{k}{W}{A}$ such that $[\eqrelr_1t_1w_1]=[fk]$ and $[gk]=[\eqrels_2v_2w_2]$ in $\CC_{ex/wlex}$. That is, we need to find an arrow $\fun{r}{W}{R}$   with $\eqrelr_1r=\eqrelr_1t_1w_1$ and $\eqrelr_2r=fk$
and an arrow  $\fun{s}{W}{S}$ with 
$\eqrels_1s=gk$ and $\eqrels_2s=\eqrels_2v_2w_2$. It is immediate to verify that 
\[
\xymatrix{W\ar[r]^-{v_1w_2}&A}\quad \xymatrix{W\ar[r]^-{t_1w_1}&R}
\quad \xymatrix{W\ar[r]^-{v_2w_2}&S}
\]
are the desired $k$,
 $r$ and $s$, respectively.

\section{Projective objects} 
\label{sect:proj} 

In this section we provide a characterization of the essential image of the quotient completion presented in the \cref{sect:prelim}.
This  generalizes (and is inspired by) an analogous characterization for the exact completion of a category with (weak) finite limits in \cite{CarboniA:freecl,CarboniA:regec} and another one for the elementary quotient completion of existential elementary doctrines in \cite{MaiettiPR24}. 
To achieve this, we have to introduce the notion of \emph{projective object} with respect to quotient arrows in a relational doctrine. 

\begin{definition}\label[def]{def:proj-obj}
Let \fun{\RDoc}{\bop\CC}{\Pos} be a relational doctrine with quotients. 
An object $P$ in \CC is \emph{$\RDoc$-projective} (with respect to quotient arrows)  if 
for every arrow \fun{f}{P}{Y} and every quotient arrow \fun{q}{X}{Y} there is  \fun{h}{P}{X} such that $f = q\circ h$, as in the following diagram 
\[\xymatrix{
& X \ar[d]^-{q} 
\\
  P \ar@{..>}[ru]^-{h}
    \ar[r]_-{f} 
& Y 
}\]
\end{definition}

Note that this defines $\RDoc$-projective objects as those that are projective with respect to the class of (effective descent) quotient arrows.

\begin{example}\label[ex]{ex:proj-obj}
\begin{enumerate}
\item\label{ex:proj-obj:jspan}
Consider the doctrine $\jmSpan\CC$ of jointly monic spans over an exact category \CC 
(\cf \refItem{ex:rel-doc}{jmspans}). 
We have that 
an object $P$ is $\jmSpan\CC$-projective if and only if it is regular projective.
\item\label{ex:proj-obj:met} 
Consider the doctrine $\BM$ of bimodules over metric spaces 
(\cf \refItem{ex:rel-doc}{met}). 
A metric space $X$ is said to be trivial if $\dist_X(x,x') = \infty$ whenever $x\ne x'$. 
Trivial metric spaces are $\BM$-projectives.
\end{enumerate}
\end{example}

 An important property of $\RDoc$-projective objects is that they are closed under retracts. 

\begin{proposition}\label{prop:proj-retract}
Let $\fun{\RDoc}{\bop\CC}{\Pos}$ be a relational doctrine and $\fun{r}{X}{Y}$ a retraction in \CC. 
If $X$ is $\RDoc$-projective, $Y$ is $\RDoc$-projective. 
\end{proposition}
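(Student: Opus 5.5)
The argument is the standard one showing that projectives are closed under retracts; none of the relational structure is needed beyond the definition of $\RDoc$-projectivity in \cref{def:proj-obj}. Since $r$ is a retraction, fix a section $\fun{s}{Y}{X}$ with $r\circ s=\id_Y$. Let $\fun{f}{Y}{Z}$ be an arbitrary arrow and $\fun{q}{W}{Z}$ a quotient arrow. We must find $\fun{h}{Y}{W}$ with $q\circ h=f$.

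First precompose $f$ with $r$, obtaining $\fun{f\circ r}{X}{Z}$. Because $X$ is $\RDoc$-projective, there is a lift $\fun{k}{X}{W}$ such that $q\circ k=f\circ r$. Then set $h=k\circ s$. We have
\[
q\circ h = q\circ k\circ s = f\circ r\circ s = f\circ \id_Y = f,
\]
so $h$ is the required lift, and $Y$ is $\RDoc$-projective.

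There is essentially no obstacle here. The only point to notice is that the class of quotient arrows against which we lift is fixed independently of the object, so the lifting property for $X$ applies directly. Neither \cref{prop:splitepi} nor extensionality is needed. The hypothesis that $\RDoc$ has quotients is only implicit, through the definition of $\RDoc$-projective.
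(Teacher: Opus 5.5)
Your proof is correct and matches the paper's argument exactly. You lift $f\circ r$ along $q$ using the projectivity of $X$, and then precompose the lift with a section $s$ of $r$ to get $q\circ(k\circ s)=f$.
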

\begin{proof}
Consider an arrow $\fun{f}{Y}{W}$ and a quotient arrow $\fun{q}{Z}{W}$. 
Since $X$ is $\RDoc$-projective, we have an arrow 
$\fun{g}{X}{Z}$ such that 
$q\circ g = f \circ r$. 
Since $r$ is a retraction, it has a section $\fun{s}{Y}{X}$, then we derive 
$q \circ (g\circ s) = f$, proving that $Y$ is $\RDoc$-projective. 
\end{proof}

Thanks to this proposition, we can show that \refItem{ex:proj-obj}{met} is an instance of a general fact about the extensional quotient completion. 

\begin{proposition}\label[cor]{cor:proj-obj-eqc}
Let $\RDoc$ be a relational doctrine and $\EQR\RDoc$ its 
extensional  quotient completion. 
Then, an object \ple{X,\eqrelr} is $\EQR\RDoc$-projective if and only if it is a retract of $\ple{X,\rid_X}$. 
\end{proposition}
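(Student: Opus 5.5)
The plan is to prove two things: first that every object of the form $\ple{X,\rid_X}$ is $\EQR\RDoc$-projective, and then to read off both directions of the statement. The ``if'' direction then follows at once from \cref{prop:proj-retract}. For the ``only if'' direction, I would use that $\fun{\eqc{\id_X}}{\ple{X,\rid_X}}{\ple{X,\eqrelr}}$ is a quotient arrow in $\EQR\RDoc$. Indeed, $\eqrelr$ is an $\EQR\RDoc$-equivalence relation on $\ple{X,\rid_X}$ since $\rid_X\order\eqrelr$, and the paper recalls that its quotient arrow is exactly $\eqc{\id_X}$. If $\ple{X,\eqrelr}$ is projective, lifting $\id_{\ple{X,\eqrelr}}$ along this quotient gives a section $s$ with $\eqc{\id_X}\circ s=\id$. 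Hence $\ple{X,\eqrelr}$ is a retract of $\ple{X,\rid_X}$ via $\eqc{\id_X}$.

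The key step is therefore projectivity of $\ple{X,\rid_X}$. I would first characterise quotient arrows in $\EQR\RDoc$ concretely. Let $\fun{\eqc{q}}{\ple{Y,\eqrels}}{\ple{Z,\tau}}$ be a quotient arrow for some $\EQR\RDoc$-equivalence relation $\eqrels'$ on $\ple{Y,\eqrels}$. Since quotient arrows are unique up to isomorphism and $\eqc{\id_Y}\colon\ple{Y,\eqrels}\to\ple{Y,\eqrels'}$ is one such quotient, we get $\eqc{q}=i\circ\eqc{\id_Y}$ for an isomorphism $i$. Projectivity against $\eqc{q}$ then reduces to projectivity against $\eqc{\id_Y}\colon\ple{Y,\eqrels}\to\ple{Y,\eqrels'}$: a lift of $i^{-1}\circ f$ along $\eqc{\id_Y}$ is a lift of $f$ along $\eqc{q}$.

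So take an arrow $\fun{\eqc{f}}{\ple{X,\rid_X}}{\ple{Y,\eqrels'}}$. This means $\rid_X\order\gr{f}\rcomp\eqrels'\rcomp\gr{f}\rconv$. We need $\eqc{h}\colon\ple{X,\rid_X}\to\ple{Y,\eqrels}$ with $\eqc{\id_Y}\circ\eqc{h}=\eqc{f}$, and the natural choice is $h=f$. This is a legitimate arrow into $\ple{Y,\eqrels}$ because $\rid_X\order\gr{f}\rcomp\gr{f}\rconv\order\gr{f}\rcomp\eqrels\rcomp\gr{f}\rconv$, using totality of $\gr{f}$ and reflexivity of $\eqrels$. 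The equation $\eqc{\id_Y\circ f}=\eqc{f}$ in $\ple{Y,\eqrels'}$ holds trivially, since it is the same representative.

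The main obstacle is not the lift itself, which is essentially free because the domain carries the discrete relation. It is the identification of an arbitrary quotient arrow of $\EQR\RDoc$ with one of the canonical form $\eqc{\id_Y}$ up to isomorphism. For this I rely on the description of quotients in $\EQR\RDoc$ recalled in the preliminaries and on the uniqueness up to isomorphism coming from the universal property. With that in hand, both directions of the statement follow as described in the first paragraph.
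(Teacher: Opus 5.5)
Your proposal is correct and follows essentially the same route as the paper. The paper likewise splits the quotient $\eqc{\id_X}\colon\ple{X,\rid_X}\to\ple{X,\eqrelr}$ for the left-to-right direction, and for the converse it reduces via \cref{prop:proj-retract} to projectivity of $\ple{X,\rid_X}$: it replaces an arbitrary quotient by $\eqc{\id_Y}$ up to isomorphism and lifts with $f$ itself, using totality of $\gr{f}$ and reflexivity of the finer relation.
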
 
\begin{proof}
To prove the left-to-right implication, 
assume that \ple{X,\eqrelr} is $\EQR\RDoc$-projective and consider the quotient arrow $\fun{\eqc{\id_X}}{\ple{X,\rid_X}}{\ple{X,\eqrelr}}$. 
Then, we get an arrow 
$\fun{\eqc{f}}{\ple{X,\eqrelr}}{\ple{X,\rid_X}}$ 
such that the diagram  
\[\xymatrix{
& \ple{X,\rid_X} \ar[d]^-{\eqc{\id_X}} \\ 
  \ple{X\eqrelr}
  \ar[r]_-{\id_{\ple{X,\eqrelr}}}
  \ar[ur]^-{\eqc{f}} 
& \ple{X,\eqrelr}
}\]
commutes. This proves that $\ple{X,\eqrelr}$ is a retract of $\ple{X,\rid_X}$. 

To prove the converse, 
thanks to \cref{prop:proj-retract}, it suffices to show that $\ple{X,\rid_X}$ is $\EQR\RDoc$-projective. 
Consider an arrow 
$\fun{\eqc{f}}{\ple{X,\rid_X}}{\ple{Y,\eqrelr}}$ and a quotient arrow 
$\fun{\eqc{q}}{\ple{Z,\eqrels'}}{\ple{Y,\eqrels}}$. 
Without loss of generality, we can assume $Z = Y$ and $\eqc{q} = \eqc{\id_Y}$ as all quotient arrows are of this form up to isomorphism. 
Therefore, to conclude, we only have to check that $\eqc{f}$ is a well-defined arrow from $\ple{X,\rid_X}$ to $\ple{Y,\eqrels'}$, that is, 
$\rid_X \order \gr{f}\rcomp\eqrels'\rcomp\gr{f}\rconv$.
This is immediate as 
$\rid_X \order\gr{f}\rcomp\rid_Y'\rcomp\gr{f}\rconv \order \gr{f}\rcomp\eqrels'\rcomp\gr{f}\rconv$, since $\eqrels'$ is reflexive. 
\end{proof}

Another useful property of projective objects is that left adjoints preserve them.  

\begin{proposition}\label[prop]{prop:proj-obj-ladj}
Let \oneAr{F}{\RDoc}{\SDoc} be a 1-arrow in \QRDtn such that $\fn{F}$ has a left adjoint $L$. 
If $P$ is $\SDoc$-projective, $LP$ is $\RDoc$-projective. 
\end{proposition}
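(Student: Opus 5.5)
The argument is the standard one showing that left adjoints preserve projectivity, using the fact that right adjoints preserving the relevant epimorphisms reflect projectivity through the adjunction. Write $\eta$ and $\varepsilon$ for unit and counit of $L\dashv\fn{F}$. Here $\fun{L}{\D}{\CC}$, where $\CC$ is the base of $\RDoc$ and $\D$ the base of $\SDoc$. Let $P$ be $\SDoc$-projective, and take an arrow $\fun{f}{LP}{Y}$ and a quotient arrow $\fun{q}{X}{Y}$ in $\CC$. We must find $\fun{h}{LP}{X}$ with $q\circ h=f$.

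First I check that $\fn{F}q$ is a quotient arrow in $\D$. Since $\RDoc$ has quotients and $q$ is a quotient arrow for some $\RDoc$-equivalence relation $\eqrelr$ on $X$, $q$ is a quotient arrow in the sense of the definition. Because $F$ is a 1-arrow in $\QRDtn$, it preserves quotients. Hence $\fn{F}q$ is a quotient arrow for the $\SDoc$-equivalence relation $\lift{F}_{X,X}(\eqrelr)$ on $\fn{F}X$.

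This is the only step needing care. The definition of $\SDoc$-projectivity quantifies over quotient arrows of $\SDoc$-equivalence relations. Being such a quotient arrow is exactly what preservation gives, regardless of effectiveness or descent, so the hypothesis applies directly. If one insisted on effective descent quotient arrows, I would note that $\SDoc$ has quotients, so the quotient of $\lift{F}_{X,X}(\eqrelr)$ is effective descent. By uniqueness of quotient arrows up to isomorphism, $\fn{F}q$ is then isomorphic to it, and projectivity is invariant under composing with isomorphisms.

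Next, transpose $f$ to $\fun{f^\sharp = \fn{F}f\circ\eta_P}{P}{\fn{F}Y}$. By $\SDoc$-projectivity of $P$, there is $\fun{k}{P}{\fn{F}X}$ with $\fn{F}q\circ k = f^\sharp$. Transposing back, set $h = \varepsilon_X\circ Lk\colon LP\to X$. By naturality of $\varepsilon$,
\[
q\circ h = q\circ\varepsilon_X\circ Lk = \varepsilon_Y\circ L\fn{F}q\circ Lk = \varepsilon_Y\circ L(f^\sharp) = f,
\]
where the last equality is the triangle identity, i.e. the bijectivity of transposition. This proves that $LP$ is $\RDoc$-projective.
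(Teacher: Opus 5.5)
Your proof is correct and follows the paper's argument: you use quotient preservation by $F$ to make $\fn{F}q$ a quotient arrow, lift the transpose $f^\sharp$ through it by $\SDoc$-projectivity of $P$, and transpose back. Your explicit naturality and triangle-identity computation, and your remark on effective descent quotients, only spell out details the paper leaves implicit.
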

\begin{proof}
Consider an arrow \fun{f}{LP}{Y} and a quotient arrow \fun{q}{X}{Y} in the base of $\RDoc$. 
Since $F$ preserves quotients, we have that the arrow \fun{\fn{F}q}{\fn{F}X}{\fn{F}Y} is a quotient arrow in the base of $\SDoc$. 
Then, since $P$ is $\SDoc$-projective, we get an arrow \fun{h}{P}{\fn{F}X} such that the following diagram commutes
\[\xymatrix{
& \fn{F}X \ar[d]^-{\fn{F}q} 
\\
  P \ar[ru]^-{h} \ar[r]_-{f^\sharp} 
& \fn{F}Y 
}\]
where $f^\sharp$ is the transpose of $f$ along the adjunction $L \dashv\fn{F}$. 
Hence, transposing the above diagram we get the thesis. 
\end{proof}

For a relational doctrine $\RDoc$ with quotients, 
we denote by $\RProj\RDoc$ its restriction to the full subcategory $\Proj\RDoc$ of its base spanned by $\RDoc$-projective objects.  
The next definition  provides the central property for our characterization of the extensional quotient completion. 

\begin{definition}\label[def]{def:enough-proj}
Let \fun{\RDoc}{\bop\CC}{\Pos} be a relational doctrine with quotients. 
A full subcategory $\ct{G}$ of $\Proj\RDoc$ is said to be a \emph{$\RDoc$-projective cover} 
if, for every object $X$ in \CC, there is a quotient arrow \fun{q}{P}{X} where $P$ is in $\ct{G}$. 
We say that $\RDoc$ \emph{has enough projectives} if it has a $\RDoc$-projective cover. 
\end{definition}

In other words, a relational doctrine $\RDoc$ has enough projectives when every object can be presented as a quotient of a $\RDoc$-projective object. 
In a sense, if $\ct{G}$ is a $\RDoc$-projective cover, we know that objects of $\ct{G}$, that are $\RDoc$-projective, suffices to ``generate'' all objects in the base of $\RDoc$.
This idea is made precise in the next theorem and the subsequent corollary. 
In the following, 
given a 1-arrow \oneAr{F}{\SDoc}{\RDoc} in \RDtn into an extensional relational doctrine with quotients, we denote by \oneAr{F^\sharp}{\EQR\SDoc}{\RDoc} its transpose along the biadjunction $\REQFun\dashv\EQRFun$, that is, 
$F^\sharp = Q^\RDoc\circ\EQR{F}$, where $Q^\RDoc$ is the component of the counit at $\RDoc$. 
We need also to introduce a class of morphisms of doctrines that generalise the full and faithful functors between categories. To this purpose, observe that a full and faithful functor $F:\CC\to\ct{D}$ is a local isomorphism in the sense that for every $A$ and $B$ the mapping $\fun{F(-)}{\CC(A,B)}{\ct{D}(FA,FB)}$ is a bijection. If $\CC$ is the base of a relational doctrine $\RDoc$ one has at least two notions of morphism between objects $A$ and $B$, the arrows of $\CC$ and the elements of $\RDoc(A,B)$. We say that a 1-arrow $F$ is \emph{fully faithful} if $\fn{F}$ is fully faithful and $\lift{F}$ is a family of isomorphisms.

\begin{theorem}\label[thm]{thm:proj-obj}
Let \fun{\RDoc}{\bop\CC}{\Pos}  be an extensional relational doctrine with quotients and 
\oneAr{F}{\SDoc}{\RDoc} a fully-faithful  1-arrow in \RDtn. 
Then, the following are equivalent:
\begin{enumerate}
\item\label{thm:proj-obj:1} the 1-arrow \oneAr{F^\sharp}{\EQR\SDoc}{\RDoc} is an equivalence in \EQRDtn;
\item\label{thm:proj-obj:2} $F$ factors through $\RProj\RDoc$ and 
for every object $X$ in \CC, there is an object $P_X$ in the base of $\SDoc$ and a quotient arrow \fun{q_X}{\fn{F}P_X}{X} in \CC.
\end{enumerate}
\end{theorem}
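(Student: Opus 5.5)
We need to understand $F^\sharp$ concretely. On objects it sends $\ple{A,\eqrelr}$ to the quotient of $\lift{F}(\eqrelr)$ on $\fn{F}A$, say $q_{\eqrelr}\colon\fn{F}A\to W$, and on an arrow $\eqc{f}$ it gives the unique arrow induced by $q_{\eqrels}\circ\fn{F}f$. On a descent datum $\relr\in\EQR\SDoc(\ple{A,\eqrelr},\ple{B,\eqrels})$ it gives $\gr{q_{\eqrelr}}\rconv\rcomp\lift{F}(\relr)\rcomp\gr{q_{\eqrels}}$. Since equivalences in \EQRDtn are the 1-arrows that are essentially surjective on the base and fully faithful in the sense just defined, we verify these properties.

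For (\ref{thm:proj-obj:1})$\Rightarrow$(\ref{thm:proj-obj:2}), the quotient arrows are immediate. Given $X$, choose $\ple{A,\eqrelr}$ with $F^\sharp\ple{A,\eqrelr}\cong X$. The quotient arrow $q_{\eqrelr}\colon \fn{F}A\to F^\sharp\ple{A,\eqrelr}$, composed with the isomorphism, is again a quotient arrow, so we take $P_X=A$. For projectivity we use that $F^\sharp$ restricted to objects $\ple{A,\rid_A}$ is $F$, since $\lift{F}(\rid_A)=\rid_{\fn{F}A}$ has the identity as quotient. By \cref{cor:proj-obj-eqc}, $\ple{A,\rid_A}$ is $\EQR\SDoc$-projective. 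An equivalence in \QRDtn preserves and reflects quotient arrows and is an equivalence of bases, so projectivity transfers and $\fn{F}A$ is $\RDoc$-projective.

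For (\ref{thm:proj-obj:2})$\Rightarrow$(\ref{thm:proj-obj:1}), essential surjectivity is exactly the hypothesis. Given $q_X\colon\fn{F}P_X\to X$, take $\eqrelr=\lift{F}^{-1}(\gr{q_X}\rcomp\gr{q_X}\rconv)$, which is an $\SDoc$-equivalence relation since $\lift F$ is an isomorphism preserving the operations. Because $q_X$ is effective, its kernel quotient is $q_X$, so $F^\sharp\ple{P_X,\eqrelr}\cong X$.

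The local isomorphism on relations follows from descent. The map $\relr\mapsto\gr{q_{\eqrelr}}\rconv\rcomp\lift F(\relr)\rcomp\gr{q_{\eqrels}}$ has inverse $\relr'\mapsto\lift F^{-1}(\gr{q_{\eqrelr}}\rcomp\relr'\rcomp\gr{q_{\eqrels}}\rconv)$. One composite is the identity by the descent equalities $\gr q\rconv\rcomp\gr q=\rid$. The other composite is the identity by effectiveness, $\gr q\rcomp\gr q\rconv=\lift F(\eqrelr)$, together with the descent-datum condition $\eqrelr\rcomp\relr\rcomp\eqrels=\relr$.

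Faithfulness uses extensionality. If $F^\sharp\eqc f=F^\sharp\eqc g$, then $q_{\eqrels}\fn Ff=q_{\eqrels}\fn Fg$. Taking graphs and using effectiveness gives $\rid\le\gr{\fn Ff}\rcomp\lift F(\eqrels)\rcomp\gr{\fn Fg}\rconv$. We need $\lift F(\gr f)=\gr{\fn Ff}$; this holds because both sides are functional and total with $\lift F(\gr f)\le\gr{\fn F f}$ by naturality, so \cref{prop:fun-ord} applies. Reflecting through $\lift F^{-1}$ then gives $\eqc f=\eqc g$.

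Fullness is the main obstacle, and this is where projectivity is used. Let $h\colon W_{\eqrelr}\to W_{\eqrels}$. Since $\fn FA$ is $\RDoc$-projective and $q_{\eqrels}$ is a quotient arrow, $h\circ q_{\eqrelr}$ lifts to some $k\colon\fn FA\to\fn FB$. Full faithfulness of $\fn F$ gives $k=\fn Ff$. It remains to check that $f$ respects the relations, i.e.\ $\eqrelr\le\gr f\rcomp\eqrels\rcomp\gr f\rconv$. After applying $\lift F$, this follows from $\lift F(\eqrelr)=\gr{q_{\eqrelr}}\rcomp\gr{q_{\eqrelr}}\rconv$, the equation $q_{\eqrels}\fn Ff=hq_{\eqrelr}$, and totality of $\gr h$. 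Finally, $F^\sharp\eqc f=h$ by the uniqueness clause of the quotient.
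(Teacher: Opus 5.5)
Your proof is correct. For (\ref{thm:proj-obj:1})$\Rightarrow$(\ref{thm:proj-obj:2}) it is essentially the paper's argument: the pseudoinverse preserves quotients, $\ple{A,\rid_A}$ is projective, $\fn G$ is fully faithful, and $P_X$ comes from essential surjectivity.

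For (\ref{thm:proj-obj:2})$\Rightarrow$(\ref{thm:proj-obj:1}) you take a genuinely different route.

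\textbf{The paper's route.} It builds an explicit pseudoinverse $G$ with
\begin{itemize}
\item $\fn GX=\ple{P_X,\eqrelr_X}$;
\item $\fn Gf=\eqc{\hat f}$, obtained by lifting along $p_Y$;
\item $\lift G(\relr)=\lift F^{-1}(\gr{p_X}\rcomp\relr\rcomp\gr{p_Y}\rconv)$.
\end{itemize}
It then checks by a diagram chase that $G$ preserves quotients, and writes down the two invertible 2-arrows.

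\textbf{Your route.} You check that $F^\sharp$ is essentially surjective, full and faithful on bases, and an order-isomorphism on fibres. You then conclude by a Whitehead-type characterization of equivalences in \EQRDtn. This is more modular and shows cleanly where each hypothesis is used: projectivity only for fullness, effectiveness and descent for the fibres and for faithfulness.

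\textbf{What still needs proof.} The paper never states that characterization, and you assert it without justification. It is true, but it needs an argument.
\begin{enumerate}
\item Choose an adjoint equivalence $(\fn G,\theta,\phi)$ for $\fn{F^\sharp}$ on bases. Define $\lift G(\relr)$ as $\lift{F^\sharp}^{-1}$ applied to $\relr$ reindexed along the isomorphisms $\theta$.
\item Reindexing along an isomorphism preserves composition and identities, because $\gr{\theta}\rconv=\gr{\theta^{-1}}$. The 2-cell conditions then hold as equalities.
\item Check that this pseudoinverse preserves quotient arrows, since equivalence in \EQRDtn demands it. It does: transport the factorization problem through $\fn{F^\sharp}$, and use that $\fn{F^\sharp}$ is full and faithful and that quotient arrows are epic.
\end{enumerate}
The paper's explicit construction is, in effect, this verification carried out concretely for $F^\sharp$.

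\textbf{Two minor points.}
\begin{itemize}
\item $\lift F(\gr f)=\gr{\fn Ff}$ follows directly from strict naturality of $\lift F$ and $\lift F(\rid)=\rid$, so \cref{prop:fun-ord} is not needed.
\item In the faithfulness step you derive $\rid_A\le\gr f\rcomp\eqrels\rcomp\gr g\rconv$. But $\eqc f=\eqc g$ requires $\eqrelr\le\gr f\rcomp\eqrels\rcomp\gr g\rconv$. This follows by precomposing with $\eqrelr\le\gr f\rcomp\eqrels\rcomp\gr f\rconv$. Alternatively, start your chain from $\lift F(\eqrelr)=\gr{q_\eqrelr}\rcomp\gr{q_\eqrelr}\rconv$ instead of $\rid$.
\end{itemize}
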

\begin{proof}
We first prove the implication from \cref{thm:proj-obj:1} to \cref{thm:proj-obj:2}. 
Let \oneAr{G}{\RDoc}{\EQR\SDoc} be the pseudoinverse of $F^\sharp$ in \EQRDtn, 
hence, by hypothesis, both $F^\sharp$ and $G$ preserve quotients. 
First we show that, for every object $A$ in the base of $\SDoc$, the object $\fn{F}A$ is $\RDoc$-projective, which immediately implies that $F$ factors through $\RProj\RDoc$. 
Consider a quotient arrow \fun{q}{X}{Y} and an arrow \fun{f}{\fn{F}A}{Y} in \CC.
Since $\fn{F}A \cong \fn{F^\sharp}\ple{A,\rid_A}$ and $G$ is a pseudoinverse of $F^\sharp$, we deduce that 
$\fn{G}\fn{F}A \cong \ple{A,\rid_A}$, hence by \cref{prop:proj-obj-ladj} we get that $\fn{G}\fn{F}A$ is $\EQR\SDoc$-projective. 
Since $G$ preserves quotients, we know that 
\fun{\fn{G}q}{\fn{G}X}{\fn{G}Y} is a quotient arrow, hence by projectivity of $\fn{G}\fn{F}A$, we get an arrow
as in the next commutative diagram:
\[\xymatrix{
& \fn{G}X \ar[d]^-{\fn{G}q}  \\
  \fn{G}\fn{F}A \ar[r]_-{\fn{G}f} \ar[ur]^-{\eqc{h}} 
& \fn{G}Y 
}\]
Finally, since $\fn{G}$ is fully-faithful, there is an arrow \fun{h}{\fn{F}A}{X} such that 
$f = q\circ h$, as needed. 
 
Consider now an object $X$ in \CC  and suppose that $\fn{G}X = \ple{P_X,\eqrelr_X}$. 
Let us consider the object $P = \fn{F^\sharp}\ple{P_X,\rid_{P_X}}$ in \CC. 
The arrow  \fun{\eqc{\id_{P_X}}}{\ple{P_X,\rid_{P_X}}}{\ple{P_X,\eqrelr_X}} is a quotient arrow for the $\SDoc$-equivalence relation $\eqrelr_X$,  
thus the arrow \fun{\fn{F^\sharp}\id_{P_X}}{P}{\fn{F^\sharp}\ple{P_X,\eqrelr_X}} is a quotient arrow in $\RDoc$, because $F^\sharp$ preserves quotients. 
Therefore, because $\fn{F^\sharp}\ple{P_X,\eqrelr_X} = \fn{F^\sharp}\fn{G} X$ is isomorphic to $X$, as $F^\sharp$ is a pseudoinverse of $G$, and $\fn{F^\sharp}P_X = Q^\RDoc\ple{\fn{F}P_X,\rid_{\fn{F}P_X}}$ is isomorphic to $P$, 
we get the thesis. 

\medskip 

Towards a proof of the other direction, 
we build a pseudoinverse of $F^\sharp$. 
By hypothesis, 
for every object $X$ in \CC, we have an object $P_X$ in the base of $\SDoc$ and 
a quotient arrow \fun{p_X}{\fn{F}P_X}{X}, where $\fn{F}P_X$ is $\RDoc$-projective.
Moreover, since $\lift{F}$ is an isomorphism, 
we let $\eqrelr_X$ be the unique $\SDoc$-equivalence relation on $P_X$ such that 
$\lift{F}_{P_X,P_X}(\eqrelr_X) = \gr{p_X}\rcomp\gr{p_X}\rconv$. 
Consider now an arrow \fun{f}{X}{Y} in \CC. 
Since $\fn{F}P_X$ is $\RDoc$-projective and $\fn{F}$ is full, we get an arrow \fun{\hat{f}}{P:X}{P_Y} such that 
$p_Y\circ\fn{F}\hat{f} = f\circ p_X$. 
Moreover, we have that 
\begin{align*} 
\lift{F}_{P_X,P_X}(\eqrelr_X) 
  & = \gr{p_X}\rcomp\gr{p_X}\rconv 
    \order \gr{p_X}\rcomp\gr{f}\rcomp\gr{f}\rconv\rcomp\gr{p_X}\rconv 
    = \gr{\fn{F}\hat f}\rcomp\gr{p_Y}\rcomp\gr{p_Y}\rconv\rcomp\gr{\fn{F}\hat f}\rconv 
\\
  & = \lift{F}_{P_X,P_X}(\gr{\hat f}\rcomp\eqrelr_Y\rcomp\gr{\hat f}\rconv) 
\end{align*} 
Since $\lift{F}$ is an isomorphism, we deduce that 
$\eqrelr_X\order\gr{\hat f}\rcomp\eqrelr_Y\rcomp\gr{\hat f}\rconv$ holds, 
thus proving that \fun{\eqc{\hat f}}{\ple{P_X,\eqrelr_X}}{\ple{P_Y,\eqrelr_Y}} is a well defined arrow in the base of $\EQR\SDoc$. 
Furthermore, if $g,h$ are both arrows such that $p_Y\circ \fn{F}g = f\circ p_X$ and $p_Y\circ \fn{F}h = f\circ p_X$,  we have that 

\[
\lift{F}_{P_X,P_Y}(\gr{g}\rcomp\eqrelr_Y) 
   = \gr{\fn{F}g}\rcomp\gr{p_Y}\rcomp\gr{p_Y}\rconv 
    = \gr{\fn{F}h}\rcomp\gr{p_Y}\rcomp\gr{p_Y}\rconv 
 = \lift{F}_{P_X,P_Y}(\gr{h}\rcomp\eqrelr_Y)
\]

Again, since $\lift{F}$ is an isomorphism, we deduce that 
$\gr{g}\rcomp\eqrelr_Y = \gr{h}\rcomp\eqrelr_Y$, 
thus showing that $\eqc{g} = \eqc{h}$. 
Therefore, the arrow $\eqc{\hat f}$ is uniquely determined and so  the assignments 
$\fn{G}X = \ple{P_X,\eqrelr_X}$ and $\fn{G}f = \eqc{\hat f}$ determine a functor from \CC to the base of $\EQR\SDoc$. 
Further, given a relation $\relr\in\RDoc(X,Y)$, we denote by $\hat\relr$ the unique relation in $\SDoc(P_X,P_Y)$ such that $\lift{F}_{P_X,P_Y}(\hat\relr) = \gr{p_X}\rcomp\relr\rcomp\gr{p_Y}\rconv$, which again exists because $\lift{F}$ is an isomorphism. 
Then, we set $\lift{G}_{X,Y}(\relr) = \hat\relr$ and 
it is easy to check that \oneAr{G}{\RDoc}{\EQR\SDoc} is a well-defined 1-arrow in \RDtn. 
 
We check that $G$ preserves quotients. 
Let \fun{q}{X}{W} be a quotient arrow in $\RDoc$ for $\eqrelr$ and consider an arow \fun{\eqc{f}}{\ple{P_X,\eqrelr_X}}{\ple{Y,\eqrels}} such that 
$\hat\eqrelr \order \gr{f}\rcomp\eqrels\rcomp\gr{f}\rconv$. 
Then, we have the following diagram 
\[\xymatrix{
  \fn{F}P_X 
  \ar[rrrd]^-{\fn{F}f} 
  \ar[rd]_-{\fn{F}\hat q} 
  \ar[dd]_-{p_X} 
\\
& \fn{F}P_W 
  \ar@{..>}[rr]_-{\fn{F} h} 
  \ar[dd]^-{p_W} 
& 
& \fn{F}Y 
  \ar[dd]^-{p} 
\\ 
  X 
  \ar@{..>}[rrrd]^-{f'} 
  \ar[rd]_-{q} 
\\
& W 
  \ar@{..>}[rr]^-{h'} 
&
& Z 
}\]
where \fun{p}{\fn{F}Y}{Z} is a quotient arrow for $\lift{F}_{Y,Y}(\eqrels)$ in $\RDoc$, 
$f'$ and $h'$ are uniquely determined by the universal property of the quotient arrows $p_X$ and $q$, respectively, 
and $h$ is determined by projectivity of $\fn{F}P_W$ and fullness of $\fn{F}$. 
Note that the upper triangle does not commute. 
However, the above diagram uniquely determines the arrow \fun{\eqc{h}}{\ple{P_W,\eqrelr_W}}{\ple{Y,\eqrels}} in the base of $\EQR\SDoc$ and from 
\begin{align*} 
\lift{F}_{P_X,Y}(\gr{\hat q}\rcomp\gr{h}\rcomp\eqrels) 
  & = \gr{\fn{F}\hat q}\rcomp\gr{\fn{F}h}\rcomp\gr{p}\rcomp\gr{p}\rconv 
    = \gr{p_X}\rcomp\gr{q}\rcomp\gr{h'}\rcomp\gr{p}\rconv 
    = \gr{p_X}\rcomp\gr{f'}\rcomp\gr{p}\rconv
\\
  & = \gr{\fn{F}f}\rcomp\gr{p}\rcomp\gr{p}\rconv 
    = \lift{F}_{P_X,Y}(\gr{f}\rcomp\eqrels) 
\end{align*} 
we deduce that $\gr{\hat q}\rcomp\gr{h}\rcomp\eqrels = \gr{f}\eqrels$, because $\lift{F}$ is an isomorphism, 
thus proving that $\eqc{h}\circ\eqc{\hat q} = \eqc{f}$. 
Therefore, we conclude that $\eqc{\hat q}$ is a quotient arrow for $\hat\eqrelr$ and so $G$ preserves quotients. 

We conclude by showing that $F^\sharp$ and $G$ are pseudoinverse to each other. 
Recall that $F^\sharp = Q^\RDoc\circ\EQR{F}$, where 
\oneAr{Q^\RDoc}{\EQR\RDoc}{\RDoc} is the component at $\RDoc$ of the counit of the biadjunction $\REQFun\dashv\EQRFun$ of \cref{prop:eqc-mnd}. 
Hence, for every object \ple{X,\eqrelr} in the base of $\EQR\SDoc$, there is a quotient arrow \fun{q_\ple{X,\eqrelr}}{\fn{F}X}{\fn{Q^\RDoc}\ple{\fn{F}X,\lift{F}_{X,X}(\eqrelr)}}, where $\fn{Q^\RDoc}\ple{\fn{F}X,\lift{F}_{X,X}(\eqrelr)} = \fn{F^\sharp}\ple{X,\eqrelr}$, and, 
for every $\relr \in \EQR\SDoc(\ple{X,\eqrelr},\ple{Y,\eqrels})$, we have 
$\lift{F^\sharp}_{\ple{X,\eqrelr},\ple{Y,\eqrels}}(\relr) = \gr{q_\ple{X,\eqrelr}}\rconv\rcomp\lift{F}_{X,Y}(\relr)\rcomp\gr{q_\ple{Y,\eqrels}}$. 
We have an invertible 2-arrow \twoAr{\theta}{F^\sharp\circ G}{\Id_\RDoc} given by the following diagram, for every object $X$ 
\[\xymatrix{
  \fn{F}P_X 
  \ar[d]_-{q_\ple{P_X,\eqrelr_X}}
  \ar[rd]^-{p_X} 
\\
  \fn{F^\sharp}\ple{P_X,\eqrelr_X} 
  \ar@{..>}[r]^-{\theta_X}_-{\cong} 
& X 
}\]
where the arrow $\theta_X$ exists and is an isomorphism because both $p_X$ and $q_\ple{P_X,\eqrelr_X}$ are quotient arrows for $\lift{F}_{P_X,P_X}(\eqrelr_X)$. 
On the other hand, 
for an object \ple{X,\eqrelr} in the base of $\EQR\SDoc$, we have the arrows $f_\ple{X,\eqrelr}$ and $g_\ple{X,\eqrelr}$ as in the following diagram, where $W = \fn{F^\sharp}\ple{X,\eqrelr}$ 
\[\xymatrix{
  \fn{F}P_W 
  \ar[rd]_-{p_W}
  \ar@{..>}@/^5pt/[rr]^-{\fn{F}f_\ple{X,\eqrelr}} 
& 
& \fn{F}X 
  \ar[ld]^-{q_\ple{X,\eqrelr}} 
  \ar@{..>}@/^5pt/[ll]^-{\fn{F}g_\ple{X,\eqrelr}} 
\\
& W 
}\]
determined by the fact that both $\fn{F}X$ and $\fn{F}P_W$ are $\RDoc$-projective and $\fn{F}$ is full. 
As already observed, these data uniquely determine the arrows 
\fun{\eqc{f_\ple{X,\eqrelr}}}{\ple{P_W,\eqrelr_W}}{\ple{X,\eqrelr}} and \fun{\eqc{g_\ple{X,\eqrelr}}}{\ple{X,\eqrelr}}{\ple{P_W,\eqrelr_W}} in the base of $\EQR\SDoc$, thus they are inverse to each other. 
Therefore, the 2-arrow \twoAr{\phi}{G\circ F}{\ID_{\EQR{\RProj\RDoc}}} given by $\phi_\ple{X,\eqrelr} = \eqc{f_\ple{X,\eqrelr}}$ is well defined and invertible, as needed.
\end{proof}

If \fun\RDoc{\bop\CC}{\Pos} is a relational doctrine and \fun{F}{\D}{\CC} is a functor, 
we denote by $F^\star\RDoc$ the change of base along $F$, that is, the functor $\RDoc\circ\bop{F}$. 
Furthermore, if \D is a full subcategory of \CC, we denote by $I_\D$ the associated inclusion functor. 

\begin{corollary}\label[cor]{cor:proj-obj}
Let \fun\RDoc{\bop\CC}{\Pos}  be an extensional relational doctrine with quotients and \ct{G} a full subcategory of \CC. 
Then, \ct{G} is an $\RDoc$-projective cover 
if and only if 
$\RDoc$ is equivalent to $\EQR{I_\ct{G}^\star\RDoc}$ in $\EQRDtn$.
\end{corollary}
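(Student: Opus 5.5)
The plan is to derive the corollary directly from \cref{thm:proj-obj}, applied to the canonical 1-arrow \oneAr{J}{I_\ct{G}^\star\RDoc}{\RDoc}. Here $\fn{J} = I_\ct{G}$ and $\lift{J}_{A,B}$ is the identity on $\RDoc(A,B)$. This is a 1-arrow in \RDtn because the relational operations of $I_\ct{G}^\star\RDoc$ are exactly those of $\RDoc$ restricted to objects of $\ct{G}$. It is fully faithful, since $I_\ct{G}$ is a full inclusion and $\lift{J}$ consists of identities. In this setting condition (\ref{thm:proj-obj:2}) of the theorem says that every object of $\ct{G}$ is $\RDoc$-projective and every $X$ in \CC receives a quotient arrow from some object of $\ct{G}$. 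By \cref{def:enough-proj}, this is exactly the statement that $\ct{G}$ is an $\RDoc$-projective cover.

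For the left-to-right implication, I assume $\ct{G}$ is an $\RDoc$-projective cover. Then condition (\ref{thm:proj-obj:2}) holds for $J$, so \cref{thm:proj-obj} shows that $J^\sharp$ is an equivalence between $\EQR{I_\ct{G}^\star\RDoc}$ and $\RDoc$ in \EQRDtn, which is what we want.

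For the converse, I am given some equivalence \oneAr{E}{\EQR{I_\ct{G}^\star\RDoc}}{\RDoc} in \EQRDtn, which need not be $J^\sharp$. This is the main obstacle. The first option is to replace $E$ by $J^\sharp$ through the biadjunction $\REQFun\dashv\EQRFun$ of \cref{prop:eqc-mnd}, identifying $E$ with the transpose of $E\circ\eta$, where $\eta$ is the unit at $I_\ct{G}^\star\RDoc$. One would then have to compare $E\circ\eta$ with $J$, and nothing forces these to coincide, so I would not rely on this.

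The second option avoids the comparison and reproduces the relevant half of the proof of \cref{thm:proj-obj} with $E$ in place of $F^\sharp$. For $A$ in $\ct{G}$, \cref{cor:proj-obj-eqc} shows that $\ple{A,\rid_A}$ is $\EQR{I_\ct{G}^\star\RDoc}$-projective. Since equivalences in \EQRDtn preserve and reflect quotient arrows and are fully faithful on bases, $\fn{E}\ple{A,\rid_A}$ is $\RDoc$-projective. Every object of $\EQR{I_\ct{G}^\star\RDoc}$ is a quotient of some $\ple{A,\rid_A}$ via $\eqc{\id_A}$. Because $E$ preserves quotients and is essentially surjective, every $X$ in \CC is therefore a quotient of some $\fn{E}\ple{A,\rid_A}$.

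What remains is to show that $\fn{E}\ple{A,\rid_A}$ can be taken to be $A$ itself, or at least isomorphic to it, so that the objects of $\ct{G}$ are projective and cover \CC. This step needs the equivalence to be compatible with the inclusion, meaning $E\circ\eta\cong J$. I would therefore read the statement as asserting equivalence via the canonical comparison $J^\sharp$. With that reading the converse is immediate from \cref{thm:proj-obj}, and the argument above justifies the claim in general once $\ct{G}$ is taken to be replete.
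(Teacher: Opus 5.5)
Your main argument is the paper's proof. The paper applies \cref{thm:proj-obj} to the obvious fully faithful 1-arrow $J$ from $I_{\ct{G}}^\star\RDoc$ to $\RDoc$, and for $J$ condition (\ref{thm:proj-obj:2}) is literally the definition of an $\RDoc$-projective cover. So what is proved, in both directions, is that $\ct{G}$ is a cover if and only if the canonical $J^\sharp$ is an equivalence. Your reading of the statement is the right one, and your hesitation is justified: for an arbitrary equivalence the converse actually fails.

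For the same reason, your closing claim that the argument goes through ``in general once $\ct{G}$ is taken to be replete'' is wrong. Here is a counterexample.
\begin{itemize}
\item Take $\ct{C}=\ct{Set}^{\N}$ ($\N$-indexed families of sets) and $\RDoc=\jmSpan{\ct{C}}$, which is extensional with quotients. Its quotient arrows are the pointwise surjections, so by choice every object is $\RDoc$-projective.
\item By the valid left-to-right direction, with $\ct{C}$ itself as cover, $\EQR\RDoc\simeq\RDoc$ in \EQRDtn.
\item Let $\ct{G}$ be the full, replete subcategory of families with $X_0=\emptyset$. The shift $(X_n)_n\mapsto(\emptyset,X_0,X_1,\dots)$ is an isomorphism $\ct{C}\cong\ct{G}$. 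It induces an isomorphism $\RDoc\cong I_{\ct{G}}^\star\RDoc$ in \RDtn, so $\EQR{I_{\ct{G}}^\star\RDoc}\cong\EQR\RDoc\simeq\RDoc$ in \EQRDtn.
\item Yet $\ct{G}$ is not a cover. Any arrow from an object of $\ct{G}$ to the terminal object is empty at index $0$. It is therefore not epic, and so not a quotient arrow.
\end{itemize}
For this equivalence, $\fn{E}\ple{A,\rid_A}\cong(A_1,A_2,\dots)$, which in general is not isomorphic to $A$. This is exactly the failure of $E\circ\eta\cong J$ that you pointed out.

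So for an arbitrary $E$, your second option only shows that $\RDoc$ has \emph{some} projective cover, namely the essential image of the objects $\ple{A,\rid_A}$. It does not show that $\ct{G}$ is one. If instead ``in general'' meant ``for any $E$ with $E\circ\eta\cong J$'', then repleteness is superfluous. Projectivity and quotient arrows are stable under composition with isomorphisms, so the argument works without it.
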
 
\begin{proof}
It follows from \cref{thm:proj-obj} by considering the obvious fully-faithful 1-arrow from $I_\ct{G}^\star\RDoc$ to $\RDoc$. 
\end{proof}

In other words, \cref{cor:proj-obj} ensures that 
relational doctrines obtained by the extensional quotient completion are, up to equivalence, extensional relational doctrines with quotients and enough projectives. 
 Note that, thanks to \refItem{ex:rel-doc-eqc}{span}, \cref{cor:proj-obj} has as a special case 
the standard characterization of the exact completion of a category with weak finite limits in terms of regular projective covers.  Similarly, after \cref{rem:MillyPino}, another instance of \cref{cor:proj-obj} is the characterisation of those elementary and existential doctrines that are an elementary quotient completion given in \cite{MaiettiPR24}.

The following is another example that is not capture by either exact  categories or by existential and elementary doctrines.
\begin{example}
Here we use \cref{cor:proj-obj} to show that categories of assemblies over an appropriate partial applicative structure are the base of a relational doctrine which is an extensional quotient completion; when  the applicative structure is a partial combinatory algebra, we find as instance the characterisation stated in Theorem 4.3 of \cite{lmcs:4302}. 
 Recall from \cite{tomita:LIPIcs.CSL.2021.38} that a BI-algebra is a tuple $A=\ple{\Car{A},\app, B,I}$ where 
$\app$ is a partial binary operation on $\Car{A}$ and $I,B\in \Car{A}$ are such that, for all $a,b,c\in\Car{A}$, 
$I\app a = a$ and $B\app a \app b \app c = a\app (b\app c)$.\footnote{To lighten the notation, we assume that $\app$ associates to the left, i.e., $a\app b \app c$ means $(a\app b)\app c$. } 
As it is customary when working with partial operations, 
here the equality sign denotes Kleene's equality, hence the two equaitons above means that
$I\app a$ is always defined and equal to $a$ and 
$B\app a \app b \app c$ is defined whenever so is $a\app (b\app c)$ and in this case they are equal. 
Examples of BI-algebras are BCI-algebras \cite{AbramskyHS02} and, therefore, partial combinatory algebras \cite{OostenJ:reaait}). 
An assembly over a BI-algebra $A$ is a pair $(X,\alpha)$ where $X$ is a set and $\fun{\alpha}{X}{\pw {\Car{A}}}$ is a function such that each $\alpha(x)$ is not empty; 
if moreover each $\alpha(x)$ is a singleton the assembly is said partitioned (see
\cite{CarboniA:somfcr}). 
A morphism of assemblies $\fun{f}{(X,\alpha)}{(Y, \beta)}$ is a function $\fun{f}{X}{Y}$ such that there is $r\in A$ that realises $f$, 
i.e. for every $x\in X$ and every $a\in \alpha(x)$, $r\app a$ is defined and belongs to $\beta(f(x))$.
Using the combinators $I$ and $B$, we can show that these morphisms are closed under identities and composition, so  assemblies and  their morphisms form a category $\ct{Asm}_{A}$  \cite{tomita:LIPIcs.CSL.2021.38}. 
The relational doctrines $\Rel_U$ obtained by the change of base of $\fun{\Rel}{\bop{\ct{Set}}}{\Pos}$ along the forgetful functor $\fun{U}{\ct{Asm}_{A}}{\ct{Set}}$ is a relational quotient completion. 
To see this, first observe that $\Rel_U$ is trivially extensional and it has quotients because 
a $\Rel_U$-equivalence relation  over $(X,\alpha)$ is just an equivalence relation $\eqrelr\subseteq X\times X$ and this  has a quotient arrow $\fun{q}{\ple{X,\alpha}}{\ple{X_\eqrelr,\alpha_\eqrelr}}$ where $\fun{q}{X}{X_\eqrelr}$ is the usual quotient projection in $\Set$ and $\alpha_\eqrelr(\eqc{x})$  is the union of all $\alpha(x')$ for $x'\eqrelr x$. 
Using the Axiom of Choice, we show that partitioned assemblies form a $\Rel_U$-projective cover. 
Consider an arrow $\fun{f}{(Y,\beta)}{(X/\eqrelr, \alpha_\eqrelr)}$, realised by $r$, whose domain is partitioned. 
With a slight abuse of notation, we identify $\beta(y)$ with the unique element it contains for every $y \in Y$. 
Since for every $y \in Y$ we have that $r\app \beta(y)$ is defined and belongs to $\alpha_\eqrelr(f(y))$, by definition of $\alpha_\eqrelr$, we know that there is $x\eqrelr f(y)$ such that $r\app \beta(y) \in \alpha(x)$. 
By the axiom of choice, we have a function 
$\fun{g}{Y}{X}$ such that $q\circ g = f$ and $r\app\beta(y) \in \alpha(g(y))$, for all $y\in Y$, thus proving that 
$\fun{g}{\ple{Y,\beta}}{\ple{X,\alpha}}$ is a morphism of assemblies. 
Note that if $\ple{Y,\beta}$ were not partitioned, we could not make such choice. 
. Finally note that every $(X,\alpha)$ determines a partitioned assembly $(\widehat{\alpha},\pi_\alpha)$, where  $\widehat{\alpha}$ is the disjoint union of the $\alpha(x)$ and $\pi_\alpha$ maps $(x,a)$ to $a$. The partitioned assemblies $(\widehat{\alpha},\pi_\alpha)$ covers $(X,\alpha)$ via the map that sends $(x,a)$ to $x$, which is a quotient map with respect to the relation $(x,a)\sim (x',a')$ if $x=x'$.
\end{example}

\begin{remark} 
In \refItem{ex:rel-doc-eqc}{vrel} we have observed that the relational doctrine of bimodules over separated metric spaces, i.e. $\fun{\BM}{\bop{\Met}}{\Pos}$, is the quotient completion of the relational doctrine of metric relations $\fun{\VRel\RPos}{\bop\Set}{\Pos}$, so $\BM$ has enough $\BM$-projectives. 
For this to hold, it is crucial that distances take values in $[0,\infty]$. 
Indeed, let us suppose, just for this remark, that $\Met$ is instead the category of  separated metric spaces where all points are at a finite distance, i.e., pairs $\ple{\Car{X},\dist_X}$ where $\dist_X$ takes values in $[0,\infty)$.\footnote{Note that this coincides with the usual textbook definition of metric spaces.} 
Then, we can defined a relational doctrine over them in the same way as we did in \refItem{ex:rel-doc}{met}, but considering bimodules  again taking values only in $[0,\infty)$. 
This doctrine has quotients, whose underline arrows are again surjective functions, 
but it is not a quotient completion as it does not have enough projectives. 
Indeed, consider a metric space $X$ then, for every $r\in (0,\infty)$ let $X_r$ be the space with $\Car{X_r}=\Car{X}$ and $\dist_{X_r}(x,x') = \dist_X(x,x') + r$ when $x\ne x'$. 
In the diagram 
\[
\xymatrix{
& X_r
  \ar[d]^-{\id_X} 
\\
  X
  \ar[r]_-{\id_X}
& X
}
\] 
the vertical arrow is a quotient, so if $X$ is projective, there is an arrow 
\fun{f}{X}{X_r} 
making the diagram commute. 
Viewed as a function, $f$ is necessary the identity on  the set $\Car{X}$, 
hence, because $f$ is non-expansive, for every $r\in(0,\infty)$ and all $x,x'\in\Car{X}$ with $x\ne x'$, we have
$\dist_X(x,x') \geq \dist_{X_r}(x,x') = \dist_X(x,x') + r$, which is impossible. 
This proves that no such metric space is projective. 
Note that this issue does not arise if we allow distances to take values in $[0,\infty]$ as $\infty + r = \infty$ for all $r \in (0,\infty)$.
\end{remark}

\section{Doctrines of algebras as extensional quotient completions} 
\label{sect:algebra}

A key result about the exact completion shows that the category of algebras for a monad on an exact category satisfying a form of the Rule of Choice is equivalent to the exact completion of the full subcategory of free algebras, that is, the Kleisli category \cite{Vitale94}. 
In this section, we show how we can extend this result in the context of relational doctrines. 
To this end, we first prove a variant of this result (\cref{thm:eqc-mnd}) that does not require any form of choice principle, provided that the considered monads preserve quotients. 
Then, we introduce the necessary notions concerning choice principles in relational doctrines, which allow us to finally prove the result (\cref{thm:eqc-mnd-choice}), by observing that, when a sufficiently strong choice principle holds, every monad preserve quotients.

Let us start by recalling what are monads on relational doctrines and how to construct doctrines of algebras. 
Let \fun{\RDoc}{\bop\CC}{\Pos}  be a relational doctrine and $\mnd = \ple{T,\eta,\mu}$ a monad on $\RDoc$ in $\RDtn$,  see \cite{Street72} for  the general 2-categorical definition and \cite{DagninoP25tac} for the instantiation on relational doctrines. 
We can define the Eilenberg-Moore  doctrine \fun{\RDoc^\mnd}{\bop{\CC^{\fn\mnd}}}{\Pos} whose base is the Eilenberg-Moore category for the monad 
$\fn{\mnd} = \ple{\fn{T},\eta,\mu}$ on \CC and relations  from an algebra \ple{X,a} to an algebra \ple{Y,b} are defined as follows
\[
\RDoc^\mnd(\ple{X,a},\ple{Y,b})=\{\relr\in\RDoc(X,Y)\mid \gr{a}\rconv\rcomp\lift{T}_{X,Y}(\relr)\rcomp\gr{b}\order \relr\}
\]
Namely, these are relations closed under the operations of the algebras they relate. The action on arrows is the restriction of that of $\RDoc$, that is $\RDoc^\mnd_{f,g}=\RDoc_{f,g}$. 
 For instance, when $\RDoc$ is the relational doctrine $\Rel$ of set-theoretic relations and 
$\mnd$ is the monad on $\Set$ whose algebras are monoids, extended to relations via the Barr extension~\cite{Barr70}, 
then a relation $\relr$ between monoids $\ple{X,\ast,e}$ and $\ple{X',\ast',e'}$ is a congruence, that is a subset 
$\relr\subseteq X \times X'$ such that $\ple{e,e'}\in\relr$ and, if $\ple{x,x'}\in\relr$ and $\ple{y,y'}\in \relr$, then $\ple{x\ast  y,x'\ast' y'}\in\relr$.

For a full subcategory $\D$ of $\CC$, we denote by $\D_{\fn{\mnd}}$ the full subcategory of the Eilenberg-Moore category $\CC^{\fn\mnd}$ spanned by free algebras generated by objects in $\D$, i.e., algebras of the form \ple{\fn{T}X,\mu_X} for $X$ and object of \D. 
We also write \fun{K_\D}{\D_{\fn\mnd}}{\CC^{\fn\mnd}} for the inclusion functor.
Note that when \D coincides with \CC, the category $\D_{\fn\mnd}$ is the Kleisli category of the monad $\fn\mnd$. 

There is an obvious forgetful 1-arrow \oneAr{U}{\RDoc^\mnd}{\RDoc} in \RDtn which has a left  adjoint \oneAr{F}{\RDoc}{\RDoc^\mnd}, where $\fn{F}X = \ple{\fn{T}X,\mu_X}$ and $\lift{F}_{X,Y}(\relr) = \lift{T}_{X,Y}(\relr)$. 
The component of the counit at an algebra \ple{X,a} is \fun{\epsilon_\ple{X,a}}{\ple{\fn{T}X,\mu_X}}{\ple{X,a}} given by $\epsilon_\ple{X,a} = a$. 
The arrow $\fn{U}\epsilon_\ple{X,a} = a$ is a split epimorphism, so, by the next proposition, when $\RDoc$ is extensional, it is a quotient arrow in $\RDoc$. 

\begin{proposition}\label[prop]{prop:surj-quot-split}
Let $\RDoc$ be an extensional relational doctrine. 
An arrow is a split effective descent  quotient arrow if and only if it is a split epimorphisms. 
\end{proposition}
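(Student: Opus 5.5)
The left-to-right direction is immediate: a split effective descent quotient arrow is by definition a split epimorphism, so there is nothing to prove. The content lies in the converse. Given a split epimorphism $\fun{e}{X}{Y}$ with section $\fun{s}{Y}{X}$, so $e\circ s=\id_Y$, we will show that $e$ is an effective descent quotient arrow for its own kernel $\eqrelr=\gr{e}\rcomp\gr{e}\rconv$. Effectiveness then holds by construction.

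First, descent follows from \cref{prop:splitepi}: since $e$ is a retraction, it is $\RDoc$-surjective, that is, $\rid_Y\order\gr{e}\rconv\rcomp\gr{e}$. The converse inequality $\gr{e}\rconv\rcomp\gr{e}\order\rid_Y$ holds because graphs are functional. Hence $\gr{e}\rconv\rcomp\gr{e}=\rid_Y$.

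The remaining step is the universal property. Let $\fun{f}{X}{Z}$ satisfy $\gr{e}\rcomp\gr{e}\rconv\order\gr{f}\rcomp\gr{f}\rconv$. The candidate factorisation is $h=f\circ s$. Uniqueness is immediate, since $e$ is epic. The key point is to check that $f\circ s\circ e=f$. By extensionality it suffices to show $\gr{s\circ e}\rcomp\gr{f}=\gr{f}$, or, by \cref{prop:fun-ord}, just one inequality between these functional total relations. We compute
\[
\gr{f}=\rid_X\rcomp\gr{f}\order \gr{e}\rcomp\gr{e}\rconv\rcomp\gr{f}
= \gr{e}\rcomp\gr{s}\rconv\rcomp\gr{s}\rcomp\gr{e}\rconv\rcomp\gr{f}
\order \gr{e}\rcomp\gr{s}\rconv\rcomp\gr{f}\rcomp\gr{f}\rconv\rcomp\gr{f}\order\gr{e}\rcomp\gr{s}\rconv\rcomp\gr{f}.
\]
The first inequality uses totality of $\gr{e}$. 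The equality uses $\gr{s}\rcomp\gr{e}=\gr{e\circ s}=\rid_Y$, from which $\gr{e}\rconv=\gr{s}\rcomp\gr{e}\rcomp\gr{e}\rconv$. The following steps use the kernel hypothesis and then functionality of $\gr{f}$.

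This yields $\gr{f}\order\gr{e}\rcomp\gr{s}\rconv\rcomp\gr{f}$, which still involves a converse. The main obstacle is to convert it into a comparison between graphs. The cleanest route is to precompose with $\gr{s}$ instead: since $\gr{s}\rcomp\gr{e}=\rid_Y$, the graph of $f\circ s\circ e$ is $\gr{e}\rcomp\gr{s}\rcomp\gr{f}$. We want $\rid_X\order\gr{e}\rcomp\gr{s}\rcomp\gr{f}\rcomp\gr{f}\rconv$, which gives $\gr{f}\order\gr{fse}$ and hence equality. To obtain it, apply the kernel hypothesis to $\gr{e}\rcomp\gr{e}\rconv=\gr{e}\rcomp\gr{s}\rcomp\gr{e}\rcomp\gr{e}\rconv$. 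This equation holds because $\gr{e}\rconv=\gr{s}\rcomp\gr{e}\rcomp\gr{e}\rconv$ and converses reverse composition order; the idempotent $\gr{s\circ e}$ is absorbed by the kernel. Combining with totality gives
\[
\rid_X\order\gr{e}\rcomp\gr{e}\rconv=\gr{e}\rcomp\gr{s}\rcomp\gr{e}\rcomp\gr{e}\rconv\order\gr{e}\rcomp\gr{s}\rcomp\gr{f}\rcomp\gr{f}\rconv=\gr{s\circ e}\rcomp\gr{f}\rcomp\gr{f}\rconv.
\]
This is exactly the statement that $\gr{f\circ s\circ e}$ and $\gr{f}$ coincide. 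Extensionality then gives $f=(f\circ s)\circ e$, which completes the proof.
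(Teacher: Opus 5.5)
Your proof is correct and is essentially the paper's argument: take $h=f\circ s$, compare $\gr{f\circ s\circ e}=\gr{e}\rcomp\gr{s}\rcomp\gr{f}$ with $\gr{f}$, and conclude by \cref{prop:fun-ord} and extensionality, with uniqueness because $e$ is epic and descent from \cref{prop:splitepi}; the paper derives $\gr{e}\rcomp\gr{s}\rcomp\gr{f}\order\gr{f}$ whereas you derive the reverse inequality, which makes no difference. You should delete the first displayed chain, since it is ill-typed ($\gr{e}\rcomp\gr{s}\rconv$ composes two relations in $\RDoc(X,Y)$), and if the inserted factor $\gr{s}\rconv\rcomp\gr{s}$ is corrected to $\gr{s}\rcomp\gr{e}$, the same chain already gives $\gr{f}\order\gr{e}\rcomp\gr{s}\rcomp\gr{f}$ directly, so the ``obstacle'' you describe comes only from that typo.
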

\begin{proof}
The left-to-right implication is trivial. 
For the other direction, let \fun{q}{X}{Y} be a split epimorphism with section \fun{s}{Y}{X}. 
Consider an arrow \fun{f}{X}{Z} such that $\gr{q}\rcomp\gr{q}\rconv \order \gr{f}\rcomp\gr{f}\rconv$, hence we get 
$\gr{q}\rcomp\gr{s}\rcomp\gr{f} \order \gr{f}$, which implies $\gr{q}\rcomp\gr{s}\rcomp\gr{f} = \gr{f}$ by \cref{prop:fun-ord}. 
Therefore, by extensionality, we derive $f\circ s \circ q = f$ and, since $q$ splits,  it is an epimorphism and so $f\circ s$ is unique, proving that $q$ is a quotient arrow. It is trivially effective and  by \cref{prop:splitepi}  it is also $\RDoc$-surjective.
\end{proof} 

A consequence of \cref{prop:surj-quot-split} is that split quotients are absolute, in the sense that they are preserved by any morphisms of extensional doctrines.

\begin{proposition}\label[prop]{prop:split-one-arrow}
For every morphism of extensional relational doctrines  \oneAr{F}{\RDoc}{\SDoc} and every split effective descent  quotient arrow $\fun{q}{X}{W}$, the arrow $\fun{\fn{F}q}{\fn{X}}{\fn{W}}$ is a split effective descent  quotient arrow. 
\end{proposition}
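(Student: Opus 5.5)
The plan is to reduce everything to \cref{prop:surj-quot-split}: in an extensional doctrine, being a split effective descent quotient arrow is the same as being a split epimorphism. Since $\RDoc$ is extensional and $q$ is a split effective descent quotient arrow, the left-to-right direction of \cref{prop:surj-quot-split} tells us that $q$ is a split epimorphism. So we fix a section $\fun{s}{W}{X}$ with $q\circ s=\id_W$.

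Next we apply the functor $\fn{F}$ to this equation. Functoriality gives $\fn{F}q\circ\fn{F}s=\fn{F}(q\circ s)=\fn{F}\id_W=\id_{\fn{F}W}$. Hence $\fn{F}q$ is a split epimorphism in the base of $\SDoc$, with section $\fn{F}s$. Because $\SDoc$ is extensional, the right-to-left direction of \cref{prop:surj-quot-split} then shows that $\fn{F}q$ is a split effective descent quotient arrow.

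If the statement is read as saying that $\fn{F}q$ is a quotient arrow for the image relation $\lift{F}_{X,X}(\gr{q}\rcomp\gr{q}\rconv)$, we add one more check. A 1-arrow preserves graphs: naturality of $\lift{F}$ together with preservation of identity relations gives
\[
\lift{F}_{X,W}(\gr{q})=\lift{F}_{X,W}(\RDoc\reidx{q,\id}(\rid_W))=\SDoc\reidx{\fn{F}q,\id}(\rid_{\fn{F}W})=\gr{\fn{F}q}.
\]
Since $\lift{F}$ also preserves composition and converse, $\lift{F}_{X,X}(\gr{q}\rcomp\gr{q}\rconv)=\gr{\fn{F}q}\rcomp\gr{\fn{F}q}\rconv$. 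This is exactly the kernel of $\fn{F}q$, and \cref{prop:surj-quot-split} shows that $\fn{F}q$ is a quotient arrow for its kernel. So $\fn{F}q$ is a quotient arrow for the transported equivalence relation, and it is effective and descent.

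There is no real obstacle here. The only step needing care is this last identification of $\lift{F}$ of the kernel with the kernel of $\fn{F}q$, which is a routine use of the 1-arrow axioms.
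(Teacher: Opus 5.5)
Your proof is correct and essentially matches the paper's: $\fn{F}$ preserves the splitting, and then \cref{prop:surj-quot-split} in the extensional doctrine $\SDoc$ gives the conclusion. The paper also cites \cref{prop:splitepi} for $\SDoc$-surjectivity, which is already covered by \cref{prop:surj-quot-split}. Your extra check that $\lift{F}_{X,X}(\gr{q}\rcomp\gr{q}\rconv)=\gr{\fn{F}q}\rcomp\gr{\fn{F}q}\rconv$ is a valid addition that the paper leaves implicit, and it is exactly what the later use in \cref{thm:eqc-mnd-choice} ("$T$ preserves quotient arrows") needs.
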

\begin{proof}
Every functor maps retractions to retractions, so $\fn{F}q$ splits, then it is $\SDoc$-surjective by \cref{prop:splitepi}. The claim follows by \cref{prop:surj-quot-split}.
\end{proof}

\subsection{Quotient preserving monads}
\label{sect:quot-mnd}

Consider now an extensional relational doctrine with quotients, i.e., an object of \EQRDtn, and 
a  monad $\mnd$ on it in \EQRDtn. 
This means that $\mnd$ is a monad on $\RDoc$ in $\RDtn$ such that the underlying 1-arrow preserves quotients. 
A key fact (proved in \cite{DagninoP25tac}) is that in this case the Eilenberg-Moore doctrine $\RDoc^\mnd$ is extensional and has quotients, so it is an object of \EQRDtn. 
Moreover, the forgetful 1-arrow $\oneAr{U}{\RDoc^\mnd}{\RDoc}$ preserves quotients and so does its left adjoint $\oneAr{F}{\RDoc}{\RDoc^\mnd}$. 
Therefore, this adjunction  lies in \EQRDtn. 

The next proposition shows that the forgetful 1-arrow $U$ actually  ``creates' quotients, that is, 
quotient arrows in $\RDoc^\mnd$ are exactly the homomorphisms  whose underlying arrow is a quotient in $\RDoc$. 

\begin{proposition}\label[prop]{prop:mnd-quot-reflect}
Let $\RDoc$ be an extensional relational doctrine with quotients and $\mnd$ a monad on it in \EQRDtn. 
If \fun{q}{\ple{X,a}}{\ple{Y,b}} is an algebra homomorphism such that \fun{q}{X}{Y} is a quotient arrow in $\RDoc$, 
then $q$ is a quotient arrow in $\RDoc^\mnd$ as well. 
\end{proposition}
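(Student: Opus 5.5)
The plan is to check directly the universal property of a quotient arrow in $\RDoc^\mnd$. The key point is that relations, reindexings, graphs and relational operations in $\RDoc^\mnd$ are computed exactly as in $\RDoc$. Fix an $\RDoc^\mnd$-equivalence relation $\eqrelr$ on $\ple{X,a}$ for which $q$ is a quotient arrow in $\RDoc$. Since all quotients in $\RDoc$ are effective descent up to isomorphism, we may take $\eqrelr = \gr{q}\rcomp\gr{q}\rconv$ and assume $\gr{q}\rconv\rcomp\gr{q}=\rid_Y$. The argument then has four steps: the kernel is an $\RDoc^\mnd$-relation, existence of the factorisation in the base, the factorisation is a homomorphism, and uniqueness, effectiveness and descent.

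\emph{Step 1: the kernel of $q$ lies in $\RDoc^\mnd(\ple{X,a},\ple{X,a})$.} We use $\lift{T}(\gr{q}) = \gr{\fn{T}q}$, which follows from naturality of $\lift{T}$ together with preservation of identities. Together with preservation of composition and converse, this gives $\lift{T}(\gr{q}\rcomp\gr{q}\rconv)=\gr{\fn{T}q}\rcomp\gr{\fn{T}q}\rconv$. Next we use totality of $\gr{b}$ and the homomorphism equation $\gr{\fn{T}q}\rcomp\gr{b}=\gr{a}\rcomp\gr{q}$. Finally functionality of $\gr{a}$ lets us bound
\[
\gr{a}\rconv\rcomp\gr{\fn{T}q}\rcomp\gr{\fn{T}q}\rconv\rcomp\gr{a}
\order \gr{a}\rconv\rcomp\gr{a}\rcomp\gr{q}\rcomp\gr{q}\rconv\rcomp\gr{a}\rconv\rcomp\gr{a}
\order \gr{q}\rcomp\gr{q}\rconv .
\]
So the kernel is an $\RDoc^\mnd$-equivalence relation, and $q$ satisfies the first condition of a quotient arrow in $\RDoc^\mnd$.

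\emph{Step 2: existence of the factorisation in \CC.} Let $\fun{f}{\ple{X,a}}{\ple{Z,c}}$ be a homomorphism whose kernel contains $\eqrelr$. Since graphs and kernels are computed as in $\RDoc$, the universal property of $q$ in $\RDoc$ gives a unique $\fun{h}{Y}{Z}$ in \CC with $h\circ q=f$.

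\emph{Step 3: $h$ is a homomorphism.} This is the expected main step. We need $c\circ\fn{T}h = h\circ b$. Because $\mnd$ lives in \EQRDtn, $\fn{T}q$ is a quotient arrow in $\RDoc$, hence isomorphic to an effective descent one. It is therefore $\RDoc$-surjective, and by \cref{prop:extomonic} it is an epimorphism. Now compute
\[
c\circ\fn{T}h\circ\fn{T}q = c\circ\fn{T}f = f\circ a = h\circ q\circ a = h\circ b\circ\fn{T}q ,
\]
and cancel $\fn{T}q$ on the right.

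\emph{Step 4: uniqueness, effectiveness and descent.} Uniqueness of $h$ in the Eilenberg--Moore category is inherited from \CC, since the forgetful functor is faithful. Effectiveness and descent hold because they are stated purely in terms of $\gr{q}$ and the relational operations, which coincide in $\RDoc^\mnd$ and $\RDoc$. Hence $q$ is an effective descent quotient arrow in $\RDoc^\mnd$.
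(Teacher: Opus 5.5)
Your proof is correct and follows the paper's argument. The paper's proof is exactly your Steps 2 and 3: it factors $f$ through $q$ in the base, then cancels the epimorphism $\fn{T}q$ (a quotient arrow because $T$ preserves quotients) to see that $h$ is a homomorphism. Your Steps 1 and 4 make explicit what the paper leaves implicit. One small simplification: $\fn{T}q$ is epic directly from the uniqueness clause of its universal property, so the detour through $\RDoc$-surjectivity and extensionality is not needed.
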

\begin{proof}
Since $\mnd = \ple{T,\eta,\mu}$ is in \EQRDtn, the functor $T$ preserves quotients. 
Then, we get the thesis by the following commutative diagram in the base of $\RDoc$. 
\[\xymatrix{
  \fn{T}X 
  \ar[rrd]^-{\fn{T}f} 
  \ar[rd]_-{\fn{T}q}
  \ar[dd]_-{a} 
\\ 
& \fn{T}Y 
  \ar[r]_-{\fn{T}h}
  \ar[dd]^-{b} 
& \fn{T}W 
  \ar[dd]^-{c} 
\\
  X 
  \ar[rrd]^-{f} 
  \ar[rd]_-{q}
\\
& Y 
  \ar[r]^-{h} 
& W 
}\]
where $h$ uniquely exists because $q$ is a quotient arrow in $\RDoc$. 
\end{proof}

By combining 
\cref{prop:surj-quot-split,prop:mnd-quot-reflect} we get that the components of the counit \fun{\epsilon_\ple{X,a}}{\ple{\fn{T}X,\mu_X}}{\ple{X,a}} are quotient arrows in $\RDoc^\mnd$, thus showing that every algebra is a quotient of a free algebra. 
We are now ready to prove  our main result. 

\begin{theorem}\label[thm]{thm:eqc-mnd} 
Let \fun\RDoc{\bop\CC}{\Pos} be an extensional relational doctrine with quotients and $\ct{G}$ a subcategory of \CC. 
The following are equivalent. 
\begin{enumerate}
\item\label{thm:eqc-mnd:1}
\ct{G} is an $\RDoc$-projective cover.
\item\label{thm:eqc-mnd:2}
For every monad $\mnd = \ple{T,\eta,\mu}$ on $\RDoc$ in \EQRDtn, $\ct{G}_{\fn\mnd}$ is an $\RDoc^\mnd$-projective cover.
\end{enumerate}
\end{theorem}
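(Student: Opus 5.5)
The direction from \cref{thm:eqc-mnd:2} to \cref{thm:eqc-mnd:1} I will get by instantiating \cref{thm:eqc-mnd:2} with the identity monad on $\RDoc$. This monad trivially lies in \EQRDtn. Its Eilenberg--Moore doctrine $\RDoc^\mnd$ is isomorphic to $\RDoc$: algebras are forced to be $\ple{X,\id_X}$, relations are all of $\RDoc(X,Y)$, and $\ct{G}_{\fn\mnd}$ is just $\ct{G}$. So $\ct{G}$ is an $\RDoc$-projective cover.

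For the direction from \cref{thm:eqc-mnd:1} to \cref{thm:eqc-mnd:2}, fix a monad $\mnd$ in \EQRDtn. Recall that the free/forgetful adjunction $F\dashv U$ lies in \EQRDtn. In particular, $U$ is a 1-arrow in \QRDtn and $\fn U$ has left adjoint $\fn F$. I will prove two things.

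\emph{Step 1: every free algebra on a projective object is projective.} For $P$ in $\ct{G}$, $P$ is $\RDoc$-projective, so \cref{prop:proj-obj-ladj} applied to $U$ gives that $\fn F P=\ple{\fn T P,\mu_P}$ is $\RDoc^\mnd$-projective. Hence $\ct{G}_{\fn\mnd}$ is a full subcategory of $\Proj{\RDoc^\mnd}$.

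\emph{Step 2: every algebra $\ple{X,a}$ is a quotient of an algebra in $\ct{G}_{\fn\mnd}$.} By \cref{thm:eqc-mnd:1} choose a quotient arrow $q\colon P\to X$ in $\RDoc$ with $P$ in $\ct{G}$. Consider the homomorphism
\[
a\circ\fn T q\colon\ple{\fn TP,\mu_P}\to\ple{X,a},
\]
which is the transpose of $q$. I will show its underlying arrow is a quotient arrow in $\RDoc$. Then \cref{prop:mnd-quot-reflect} makes it a quotient arrow in $\RDoc^\mnd$, which finishes the proof. The arrow $\fn Tq$ is a quotient arrow because $T$ preserves quotients, and $a$ is a split quotient arrow by \cref{prop:surj-quot-split}.

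So the main obstacle is showing that a composite of two quotient arrows is a quotient arrow in an extensional doctrine with quotients. I would first try the following. Quotient arrows are exactly effective descent quotients of their kernels, so it suffices to show that $e=a\circ\fn Tq$ is a quotient of its kernel $\kappa=\gr{e}\rcomp\gr{e}\rconv$. Take $g\colon\fn TP\to Z$ with $\kappa\order\gr g\rcomp\gr g\rconv$. Composing $a$ with its section $\eta_X$ does not obviously help, so instead I will use surjectivity of $\fn Tq$, namely $\gr{\fn Tq}\rconv\rcomp\gr{\fn Tq}=\rid$, to transport $g$ along $\fn Tq$. The kernel of $\fn Tq$ is below $\kappa$, so $g$ factors as $g=g'\circ\fn Tq$. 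Then I compute
\[
\gr a\rcomp\gr a\rconv=\gr{\fn Tq}\rconv\rcomp\kappa\rcomp\gr{\fn Tq}\order\gr{g'}\rcomp\gr{g'}\rconv,
\]
using descent of $\fn Tq$. Hence $g'$ factors uniquely through $a$, since $a$ is a quotient of its kernel by \cref{prop:surj-quot-split}. Uniqueness of the resulting factorization through $e$ follows because both $\fn Tq$ and $a$ are epic, by \cref{prop:extomonic}.

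If this general lemma turns out delicate, the fallback is to compose directly with the splitting. The map $\fn Tq$ is the quotient of $\lift T(\gr q\rcomp\gr q\rconv)$ and $a$ is split, so I would factor through $\eta_X$ and argue explicitly.
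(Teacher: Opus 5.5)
Your proof is correct and follows the paper's argument. You use the identity monad for the converse direction, \cref{prop:proj-obj-ladj} applied to the forgetful $U$ for projectivity of free algebras on objects of $\ct{G}$, and \cref{prop:mnd-quot-reflect} to show that the transpose $a\circ\fn{T}q$ is a quotient arrow. The only difference is that you compose in $\RDoc$ and lift once, and you actually verify that quotient arrows are closed under composition, whereas the paper simply invokes that fact; your kernel computation using descent of $\fn{T}q$ is sound.
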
 
\begin{proof}
The fact that \cref{thm:eqc-mnd:2} implies \cref{thm:eqc-mnd:1} is immediate by considering the identity monad on $\RDoc$. 
Towards a proof of the other direction, 
consider a monad $\mnd = \ple{T,\eta,\mu}$ on $\RDoc$ in \EQRDtn. 
Since every object in $\ct{G}_{\fn\mnd}$ is the image through a left adjoint of an object in \ct{G}, which is  an $\RDoc$-projective cover by hypothesis, 
by \cref{prop:proj-obj-ladj}, we get that all objects in $\ct{G}_{\fn{\mnd}}$ are $\RDoc^\mnd$-projective. 
Consider then an algebra \ple{X,a}. 
Since \ct{G} is an $\RDoc$-projective cover by hypothesis, we know there is a quotient arrow \fun{q_X}{P_X}{X} from an object $P_X$ in \ct{G}. 
Then, the composition 
$\xymatrix{\ple{\fn{T}P_X,\mu_{P_X}}\ar[r]^-{\fn{T}q} & \ple{\fn{T}X,\mu_X}\ar[r]^-{\epsilon_{X,a}} & \ple{X,a}}$
is a quotient arrow by \cref{prop:mnd-quot-reflect} and the fact that quotient arrows compose. 
\end{proof}

Combining \cref{cor:proj-obj,thm:eqc-mnd}, we deduce that 
the relational doctrine of algebras for a quotient preserving monad on   a relational doctrine with a projective cover is the extensional quotient completion  of its restriction to free algebras with projective generators. 
Since relational doctrines with a projective cover are exactly those obtained through the extensional quotient completion, 
this result applies to all quotient preserving monads  on such doctrines, thus providing a wide range of examples. 
 Indeed, the extensional quotient completion, being a 2-functor, maps monads in $\RDtn$ to monads in $\EQRDtn$, that is, 
if $\mnd = \ple{T,\eta,\mu}$ is a monad on $\RDoc$, then $\REQFun(\mnd) = \EQR(\mnd) = \ple{\EQR{T},\EQR\eta,\EQR\mu}$ is a quotient preserving monad on $\EQR\RDoc$. 
It is not difficult to see that the doctrine of algebras $(\EQR{\RDoc})^{\EQR{\mnd}}$ is equivalent to $\EQR{\RDoc^\mnd}$, namely, the extensional quotient completion maps doctrines of algebras to doctrines of algebras. 

\begin{example}
Following the observation above, 
to construct a quotient preserving monad on the doctrine of bimodules over separated metric spaces 
$\fun\BM{\bop\Met}{\Pos}$, 
it suffices to define a monad on the doctrine of metric relations 
$\fun{\VRel\RPos}{\bop\Set}{\Pos}$. 
This essentially amounts to extending  a monad on \Set to metric relations in a compatible way. 
As an example, let us consider the list monad $\ple{\fn{L},\eta,\mu}$ on $\Set$, where, for every set $X$, 
$\fn{L} X = X^\star$ is the set of finite lists of elements in $X$, the function
$\fun{\eta_X}{X}{\fn{L}X}$ maps each $x$ in $X$ to the singleton list containing only $x$, and 
$\fun{\mu_X}{\fn{L}\fn{L}X}{\fn{L}X}$ flattens a list of lists into a single one by concatenating all of them. 
Now, for all sets $X$ and $Y$ we define a monotone function 
$\fun{\lift{L}_{X,Y}}{\VRel\RPos(X,Y)}{\VRel\RPos(\fn{L}X,\fn{L}Y)}$
as follows: 
given $\fun{\relr}{X\times Y}{[0,\infty]}$ we set 
\[ 
\lift{L}_{X,Y}(\relr)(x_1\ldots x_n, y_1,\ldots,y_k) = 
\begin{cases}
  \sum_{i = 1}^n \relr(x_i,y_i) & \text{if } n = k \\
  \infty  &\text{otherwise} 
\end{cases}
\]
It is easy to check that 
$L = \ple{\fn{L},\lift{L}}$ is a 1-arrow on $\VRel\RPos$ in \RDtn and 
$\Lmnd = \ple{L,\eta,\mu}$ is a monad on the same doctrine. 
The doctrine of algebras for $\Lmnd$ is a doctrine of metric congruences over the category of monoids in \Set. 
Then, through the relational quotient completion, we get a quotient preserving monad on $\BM$ whose doctrine of algebras consists of congruence bimodules over monoids in the monoidal category $\Met$, which by \cref{thm:eqc-mnd} has a projective cover. 

This procedure can actually be applied to several monads on the category of sets, because many of them admit a metric extension. 
Indeed, Clementino and Tholen~\cite{ClementinoT14} provide a complete characterization of such monads, which include among the others the powerset monad and the distribution  monad.
\end{example}

\begin{example}
We define the monad $\Kmnd = \ple{K,\eta,\mu}$ on $\fun{\BM}{\bop\Met}{\Pos}$ whose algebras will be separated metric spaces with a $k$-Lipschitz endomap. 
Recall that $\ple{\fn{K},\eta,\mu}$ is a monad on the base category $\Met$ and, as discussed in \cite{Adamek22}, this can be presented by a quantitative equational theory. 
Let us first introduce a bit of notation. 
For a separated metric space $X$ and a positive real number $r$ we rite 
$rX$ for the separated metric space defined by 
$\Car{rX} = \Car{X}$ and $\dist_{rX}(x,x') = r\cdot\dist_X(x,x')$. 
Notice that an $r$-Lipschitz map from $X$ to $Y$ is nothing but a  non-expansive map (i.e., an arrow in $\Met$) from $rX$ to $Y$. 
Recall that the category $\Met$ has small coproducts: 
for a family $(X_i)_{i \in I}$ of separated metric spaces, we have 
$\Car{\coprod_{i \in I} X_i} = \coprod_{i \in I} \Car{X_i}$ and 
\[
\dist_{\coprod_{i \in I} X_i}(\ple{i,x},\ple{j,y}) = 
\begin{cases}
  \dist_{X_i}(x,y) & \text{if } i = j \\
  \infty & \text{otherwise}
\end{cases}
\]
Fix a positive real number $k$ and  consider the functor $\fun{\fn{K}}{\Met}{\Met}$ 
\[ \fn{K} X = \coprod_{n \in \N} k^nX \] 
Note that $\Car{\fn{K}X} = \N \times \Car{X}$. The additive monoid structure of $\N$ induces the monad structure of $\fn{K}$: the unit and the multiplication of the monad are 
\[
\eta_X(x) = \ple{0,x} \qquad 
\mu_X(\ple{n, \ple{m,x}}) = \ple{n + m, x} 
\]
In other words the monad $\ple{\fn{K},\eta,\mu}$ on $\Met$ is a lifting of the output monad $\N \times \blank$ on $\Set$ along the forgetful functor $\Met\to\Set$. 
The algebras for this monad are exactly separated metric spaces with a $k$-Lipschitz endomap. 
Indeed, an arrow $\fun{a}{\fn{K}X}{X}$ is determined by a family of arrows 
$\fun{a_n}{k^nX}{X}$, for all $n \in \N$, and 
the commutative triangle with the unit ensures that $a_0$ is the identity on $X$  and 
the commutative square with the multiplication forces $a_{n+1}$ to be equal to $a_1 \circ a_n$, noting that 
$a_n$ is also an arrow from $k^{n+1}X$ to $kX$. 
Therefore, the entire algebra structure is completely determined by the arrow $\fun{a_1}{kX}{X}$, which is a $k$-Lipschitz endomap on $X$. 

To get a monad on the doctrine $\BM$ we still have to define the action on bimodules. 
This can be done  as follows: 
given $\relr \in \BM(X,Y)$ we set 
\[
\lift{K}_{X,Y}(\relr)(\ple{n,x},\ple{m,y}) = 
\begin{cases}
  k^n\cdot \relr(x,y) & \text{if } n = m \\
  \infty & \text{otherwise} 
\end{cases}
\]
It is not difficult to check that $K = \ple{\fn{K},\lift{K}}$ is a 1-arrow on $\BM$ in $\RDtn$ and that 
$\Kmnd = \ple{K,\eta,\mu}$ is a monad on $\BM$ in $\RDtn$. 
To get a monad in $\EQRDtn$ we have to verify that $K$ preserves quotients. 
Recall from \refItem{ex:rel-doc-q}{met} that quotients in $\BM$ are arrows whose underlying function is surjective. 
Therefore, we have to show that, if $\fun{f}{X}{Y}$ is a surjective non-expansive map, so is $\fn{K}f$, but this is  straightforward because $\fn{K}f = \id_{\N}\times f$. 

This provides an example of a quotient preserving monad on $\BM$ which is not obtained through  the extensional quotient completion. 
Moreover, it suggests that quantitative equational theories could determine monads on $\BM$ in a natural way. 
However, we leave  a full development of this observation for future work. 

\end{example}

\subsection{Factorization, balancing and choice} 
\label{sect:quot-factor} 
 
Choice principles can take many forms, depending on the specific setting where one works and on the way they are formulated. 
The goal of this section is to develop the appropriate variants of choice principles in relational doctrines 
that will allow us to state and prove \cref{thm:eqc-mnd-choice}. 
To achieve this, we need to study how these principles are related to quotients and projective objects and this in turn requires 
some results concerning factorizations of arrows in relational doctrines with quotients. 

The first observation is that quotient and $\RDoc$-injective arrows determine an orthogonal factorization system, as the following propositions show. 

\begin{proposition}\label[prop]{prop:qi-lift}
Let \fun\RDoc{\bop\CC}{\Pos} be a relational doctrine with quotients. 
For every commutative square in \CC 
\[\xymatrix{
  X \ar[r]^-{f} \ar[d]_-{q} 
& Y \ar[d]^-{i} \\
  W \ar[r]_-{g} \ar@{..>}[ru]^-{h} 
& Z 
}\]
where $q$ is a quotient arrow and $i$ is $\RDoc$-injective, 
there exists a unique arrow $h$ making the two triangles commute. 
\end{proposition}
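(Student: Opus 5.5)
The plan is to use only the universal property of $q$ as a quotient arrow, and to use the $\RDoc$-injectivity of $i$ to show that $f$ is compatible with the equivalence relation that $q$ is a quotient of. Since $\RDoc$ has quotients, let $\eqrelr$ be the $\RDoc$-equivalence relation on $X$ for which $q$ is a quotient arrow. By definition of quotient arrow, $\eqrelr \order \gr{q}\rcomp\gr{q}\rconv$; effectiveness is not even needed.

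\emph{Step 1: $f$ respects $\eqrelr$.} The goal is $\eqrelr\order\gr{f}\rcomp\gr{f}\rconv$. Since $\gr{g}$ is total, $\rid_W\order\gr{g}\rcomp\gr{g}\rconv$, and therefore
\[
\eqrelr \order \gr{q}\rcomp\gr{q}\rconv \order \gr{q}\rcomp\gr{g}\rcomp\gr{g}\rconv\rcomp\gr{q}\rconv = \gr{g\circ q}\rcomp\gr{g\circ q}\rconv = \gr{i\circ f}\rcomp\gr{i\circ f}\rconv .
\]
The last relation equals $\gr{f}\rcomp\gr{i}\rcomp\gr{i}\rconv\rcomp\gr{f}\rconv$, using that graphs commute with composition and that the converse is contravariant. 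By $\RDoc$-injectivity of $i$, $\gr{i}\rcomp\gr{i}\rconv\order\rid_Y$, so this is below $\gr{f}\rcomp\gr{f}\rconv$. This is the only step that uses the hypothesis on $i$, and it is the crux of the argument.

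\emph{Step 2: existence and the upper triangle.} By the universal property of $q$ applied to $f$, there is a unique $\fun{h}{W}{Y}$ with $h\circ q=f$. This gives the upper triangle and also the uniqueness of $h$: any $h'$ making the upper triangle commute satisfies $h'\circ q=f$, hence $h'=h$.

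\emph{Step 3: the lower triangle.} We need $i\circ h=g$. Both $i\circ h$ and $g$ satisfy $(-)\circ q = g\circ q$, because $i\circ h\circ q=i\circ f=g\circ q$. Moreover, $\eqrelr\order\gr{g\circ q}\rcomp\gr{g\circ q}\rconv$ was already shown in Step 1, so $g\circ q$ factors uniquely through $q$. Hence $i\circ h=g$.

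The main obstacle is Step 1, the relational calculation. Everything else is a direct application of the uniqueness clause in the definition of quotient arrow, which also makes $q$ behave as an epimorphism without any appeal to extensionality.
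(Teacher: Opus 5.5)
Your proof is correct and follows the paper's argument essentially step for step. The paper uses the same chain of inequalities: totality of $\gr{g}$ and $\RDoc$-injectivity of $i$ give the factorization $h$ through $q$, and cancelling $q$ gives the lower triangle. The differences are cosmetic: you start from $\eqrelr\order\gr{q}\rcomp\gr{q}\rconv$ rather than from the kernel of $q$, which makes visible that effectiveness is not used, and you spell out why $q$ can be cancelled via the uniqueness clause, whereas the paper simply asserts that quotient arrows are epic.
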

\begin{proof}
Since the square commutes and $i$ is $\RDoc$-injective, we have the following inequalities:
\[ 
\gr{q}\rcomp\gr{q}\rconv 
  \order \gr{q}\rcomp\gr{g}\rcomp\gr{g}\rconv\rcomp\gr{q}\rconv 
  = \gr{f}\rcomp\gr{i}\rcomp\gr{i}\rconv\rcomp\gr{f}\rconv 
  \order \gr{f}\rcomp\gr{f}\rconv 
\]
Hence, since $q$ is a quotient arrow, we get a unique arrow \fun{h}{W}{Y} such that 
$f = h\circ q$. 
Moreover $q$ is epic, as it is a quotient arrow, so from 
$g\circ q = i\circ f = i\circ h\circ q$ follows that $g = i\circ h$. 
\end{proof}

\begin{proposition}\label[prop]{prop:qi-factor}
Let \fun\RDoc{\bop\CC}{\Pos} be a relational doctrine with quotients and \fun{f}{X}{Y} an arrow in \CC. 
Then, $f$ factors as $i\circ q$ where $q$ is the quotient arrow of the kernel of $f$ and $i$ is injective. 
\end{proposition}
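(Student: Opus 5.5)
The plan is to take the kernel $\eqrelr = \gr{f}\rcomp\gr{f}\rconv$ of $f$, which is an $\RDoc$-equivalence relation on $X$. Since $\RDoc$ has quotients, there is an effective descent quotient arrow $\fun{q}{X}{W}$ for $\eqrelr$. In particular $\gr{q}\rcomp\gr{q}\rconv = \gr{f}\rcomp\gr{f}\rconv$ and $\gr{q}\rconv\rcomp\gr{q} = \rid_W$. Because $\eqrelr \order \gr{f}\rcomp\gr{f}\rconv$ trivially holds, the universal property of $q$ gives a unique $\fun{i}{W}{Y}$ with $f = i\circ q$. It then remains to show that $i$ is $\RDoc$-injective, that is, $\gr{i}\rcomp\gr{i}\rconv \order \rid_W$.

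For this I would use the descent property to express $\gr{i}$ through $q$ and $f$. Since $\gr{f} = \gr{q}\rcomp\gr{i}$ and $\rid_W = \gr{q}\rconv\rcomp\gr{q}$, the following chain holds:
\[
\gr{i}\rcomp\gr{i}\rconv = \rid_W\rcomp\gr{i}\rcomp\gr{i}\rconv\rcomp\rid_W = \gr{q}\rconv\rcomp\gr{q}\rcomp\gr{i}\rcomp\gr{i}\rconv\rcomp\gr{q}\rconv\rcomp\gr{q} = \gr{q}\rconv\rcomp\gr{f}\rcomp\gr{f}\rconv\rcomp\gr{q}.
\]
Next I replace the middle kernel by effectivity, $\gr{f}\rcomp\gr{f}\rconv = \gr{q}\rcomp\gr{q}\rconv$. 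This yields $\gr{q}\rconv\rcomp\gr{q}\rcomp\gr{q}\rconv\rcomp\gr{q} = \rid_W\rcomp\rid_W = \rid_W$. So $\gr{i}\rcomp\gr{i}\rconv = \rid_W$, and in particular $i$ is injective.

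No step here looks like a real obstacle. The only point needing care is to use both effectivity and descent of $q$, which are part of the paper's definition of ``has quotients''; with a mere quotient arrow the argument would fail. One could also note, though the statement does not require it, that the factorization is unique up to unique isomorphism by \cref{prop:qi-lift}.
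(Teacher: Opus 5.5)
Your proof is correct and follows essentially the same route as the paper: take the effective descent quotient $q$ of the kernel, obtain $i$ from the universal property, and compute $\gr{i}\rcomp\gr{i}\rconv = \gr{q}\rconv\rcomp\gr{f}\rcomp\gr{f}\rconv\rcomp\gr{q} = \gr{q}\rconv\rcomp\gr{q}\rcomp\gr{q}\rconv\rcomp\gr{q} = \rid_W$ using descent and effectivity. The only cosmetic difference is that the paper first isolates $\gr{i} = \gr{q}\rconv\rcomp\gr{f}$, while you insert $\rid_W = \gr{q}\rconv\rcomp\gr{q}$ on both sides directly.
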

\begin{proof}
Let \fun{q}{X}{W} be a quotient arrow for the $\RDoc$-equivalence $\eqrelr = \gr{f}\rcomp\gr{f}\rconv$, i.e., the kernel of $f$. 
By the universal property of quotients, we get an arrow \fun{i}{W}{Y} such that $f = i\circ q$, so 
$\gr{q}\rcomp\gr{i}=\gr{f}$, which implies $\gr{i} = \gr{q}\rconv\rcomp\gr{f}$ because $q$ is $\RDoc$-surjective. 
Hence 
$\gr{i}\rcomp\gr{i}\rconv = \gr{q}\rconv\rcomp\gr{f}\rcomp\gr{f}\rconv\rcomp\gr{q} = \gr{q}\rconv\rcomp\gr{q}\rcomp\gr{q}\rconv\rcomp\gr{q} = \rid_W$ showing that $i$ is injective.
\end{proof}

\begin{corollary}\label[cor]{cor:fac-s}
Let \fun\RDoc{\bop\CC}{\Pos} be a relational doctrine with quotients. 
Then, quotient arrows and injective arrows form an orthogonal factorization system on \CC. Moreover if $\RDoc$ is extensional, the factorization system is proper.
\end{corollary}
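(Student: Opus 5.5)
The plan is to assemble the orthogonal factorization system from \cref{prop:qi-lift,prop:qi-factor}, and then to handle properness separately under extensionality. Write $\mathcal{Q}$ for the class of quotient arrows (arrows that are quotient arrows for some $\RDoc$-equivalence relation) and $\mathcal{I}$ for the class of $\RDoc$-injective arrows.

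\Cref{prop:qi-factor} gives existence of factorizations, and \cref{prop:qi-lift} gives unique diagonal fillers, i.e.\ $\mathcal{Q}\perp\mathcal{I}$. It remains to check that both classes contain the isomorphisms and are closed under composition with isomorphisms. The cleanest route is the standard characterization: an orthogonal factorization system is a pair of classes with factorizations and orthogonality, each class being closed under isomorphisms. For $\mathcal{I}$ this holds because $\gr{}$ is functorial and an isomorphism $u$ has $\gr{u}\rconv=\gr{u^{-1}}$, so $\gr{u}\rcomp\gr{u}\rconv=\rid$. For $\mathcal{Q}$, if $q$ is a quotient arrow of $\eqrelr$ and $u$ is an isomorphism, then $u\circ q$ is still a quotient of $\eqrelr$, since the universal property transports along $u$. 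Similarly $q\circ u$ is a quotient of $\RDoc\reidx{u,u}(\eqrelr)$. Here one uses that reindexing along isomorphisms is an isomorphism of posets, since $\gr{u}\rcomp-\rcomp\gr{u}\rconv$ has an inverse given by $u^{-1}$.

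With these closure properties the standard argument applies. For instance, $\mathcal{I}$ equals the class of arrows right orthogonal to $\mathcal{Q}$: factor such an arrow as $i\circ q$, lift in the square formed by $q$ and the arrow itself, and deduce that $q$ is an isomorphism. Dually, $\mathcal{Q}$ equals the left orthogonal of $\mathcal{I}$. The main subtlety is this closure under isomorphisms for quotient arrows, which is routine but needs the poset isomorphism above.

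For properness, $\RDoc$ is assumed extensional. Quotient arrows in $\RDoc$ (with quotients) are effective descent, hence $\RDoc$-surjective. This is visible in the proof of \cref{prop:qi-factor}, and it applies to any quotient arrow because all quotients of $\eqrelr$ are isomorphic to the effective descent one. Then \cref{prop:extomonic} shows that every quotient arrow is epic and every $\RDoc$-injective arrow is monic, which is exactly properness.
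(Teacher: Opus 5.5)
Your proposal is correct and follows the paper's proof: orthogonality and existence of factorizations come from \cref{prop:qi-lift,prop:qi-factor}, properness comes from \cref{prop:extomonic}, and your check that both classes contain and are closed under isomorphisms makes explicit what the paper leaves implicit. As a side remark, quotient arrows are epic even without extensionality, by the uniqueness clause of their universal property, so extensionality is really only needed to make $\RDoc$-injective arrows monic.
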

\begin{proof}The first part is a direct consequence of \cref{prop:qi-lift} and \cref{prop:qi-factor}. The second part follows by \cref{prop:extomonic}.
\end{proof}

Since quotient arrows are required to be $\RDoc$-surjective, 
\cref{prop:qi-factor} shows that in a relational doctrine with quotients any arrow factors as a $\RDoc$-surjective one followed by a $\RDoc$-injective one. 
However, the fact that the $\RDoc$-surjection is actually a quotient arrow is crucial for carrying out the proofs of \cref{prop:qi-lift,prop:qi-factor}. 
 In fact, in general, $\RDoc$-surjections and $\RDoc$-injections do not form a factorization system, because $\RDoc$-surjections need not be quotient arrows. 
For instance,  in the doctrine $\fun{\BM}{\bop\Met}{\Pos}$, introduced in \refItem{ex:rel-doc}{met}, the $\BM$-surjections are the dense maps, while the quotients are non-expansive maps whose underlying function is surjective. 
Hence, in order to understand when $\RDoc$-surjections and $\RDoc$-injections form a factorization system, we should study when $\RDoc$-surjective and quotient arrows coincide. 
Here it is where the first choice principle comes into play.

\begin{definition}\label[def]{def:balance}
Let \fun{\RDoc}{\bop\CC}{\Pos} be a relational doctrine. 
We say that $\RDoc$ is \emph{balanced} if every $\RDoc$-bijective arrow in \CC is an isomorphism. 
\end{definition}

 Notice that  quotient arrows always have this balancing property: 
if a quotient arrow is $\RDoc$-bijective then it is an isomorphism. 
Therefore, the balancing property of \cref{def:balance} essentially requires that surjective arrows behaves like quotient arrows in this respect. 

\begin{remark}
\label[rem]{rem:cat-balance}
The notion of balancing in relational doctrines is inspired by the analogous concept in plain categories (recall that
a category is balanced when any arrow which is both monic and epic it is an isomorphism) but it is not a generalization of it.
Indeed if $\CC$ is regular, the doctrine $\jmSpan{\CC}$ is balanced as the $\jmSpan{\CC}$-injections are monomorphisms and the $\jmSpan{\CC}$-surjections are regular epimorphisms, but $\CC$ need not be balanced as a category, because not all epimorphisms are regular.
Indeed a regular category is balanced if and only if all epimorphisms are regular.
\end{remark}

\begin{remark} 
The balancing property expresses a form of the unique choice principle. 
Intuitively, it says that whenever for an arrow $\fun{f}{X}{Y}$ we can prove that for every element $y$ in $Y$ there is a unique $x$ in $X$ which is mapped by $f$ into $y$, then $x$ can be picked by a function, namely the inverse of $f$. 
More formally, the balancing property requires that total and functional relations
which are the inverse of the graph of an arrow are actually
the graph of the inverse of such an arrow. 
Notice that the balancing property is actually weaker than the Rule of Unique Choice introduced in \cite{DagninoP25tac}, which requires that all total and functional relations to be the graph of some arrow. 
\end{remark}

\begin{proposition}\label[prop]{prop:quot-surj-balance}
Let $\RDoc$ be a relational doctrine with quotients. 
Then, $\RDoc$ is balanced if and only if every $\RDoc$-surjective arrow is a quotient arrow. 
\end{proposition}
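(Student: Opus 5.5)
Both directions go through the quotient/injective factorization of \cref{prop:qi-factor}.

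\emph{Balanced implies surjective arrows are quotients.} Let \fun{f}{X}{Y} be $\RDoc$-surjective. By \cref{prop:qi-factor}, $f = i\circ q$, where $q$ is the quotient arrow of the kernel of $f$ and $i$ is $\RDoc$-injective. I will show that $i$ is also $\RDoc$-surjective. Since $q$ is descent, $\gr{q}\rconv\rcomp\gr{q} = \rid_W$, and $\gr{f} = \gr{q}\rcomp\gr{i}$. Then
\[
\rid_Y \order \gr{f}\rconv\rcomp\gr{f} = \gr{i}\rconv\rcomp\gr{q}\rconv\rcomp\gr{q}\rcomp\gr{i} = \gr{i}\rconv\rcomp\gr{i},
\]
so $i$ is $\RDoc$-bijective. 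By balance, $i$ is an isomorphism. Hence $f$ is a quotient arrow followed by an isomorphism. Such an arrow is again a quotient arrow for the same equivalence relation: the kernel condition and the universal property transfer along the isomorphism, using $\gr{q}\rcomp\gr{i}\rcomp\gr{i}\rconv\rcomp\gr{q}\rconv = \gr{q}\rcomp\gr{q}\rconv$, since the graph of an isomorphism is injective. So $f$ is a quotient arrow.

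\emph{Surjective arrows are quotients implies balanced.} Let $f$ be $\RDoc$-bijective. By hypothesis $f$ is a quotient arrow. I will check which equivalence relation it is a quotient of. Factor $f$ again as $i\circ q$ with $q$ the quotient of the kernel of $f$. By injectivity of $f$, the kernel is $\gr{f}\rcomp\gr{f}\rconv = \rid_X$. The quotient of $\rid_X$ is $\id_X$, because $\id_X$ satisfies the universal property trivially: every arrow $g$ satisfies $\rid_X \order \gr{g}\rcomp\gr{g}\rconv$, since graphs are total. By uniqueness of quotient arrows up to isomorphism, $q$ is an isomorphism, so I may take $q=\id_X$ and $i=f$.

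The remaining step is to show that $f$ itself is an isomorphism. Since $f$ is a quotient arrow, it is a quotient of some relation $\eqrelr$ with $\eqrelr \order \gr{f}\rcomp\gr{f}\rconv = \rid_X$. Hence $\id_X$ satisfies $\eqrelr\order\gr{\id_X}\rcomp\gr{\id_X}\rconv$, and the universal property of $f$ gives \fun{h}{Y}{X} with $h\circ f = \id_X$. Then $f\circ h\circ f = f$, and since quotient arrows are epic (uniqueness in the universal property, applied to two factorizations of the same arrow), $f\circ h = \id_Y$. So $f$ is an isomorphism.

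I expect the main obstacle to be the bookkeeping for quotient arrows "for some relation". The definition attaches a quotient to a specific $\eqrelr$, so in the second direction one must use only $\eqrelr\order$ kernel, not effectiveness, and this is exactly what the argument above does. The routine verification that a quotient arrow composed with an isomorphism is a quotient arrow (for the first direction) I will state briefly.
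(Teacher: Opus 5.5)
Your proof is correct and follows the paper's argument. In the forward direction you factor via \cref{prop:qi-factor} and show $i$ is $\RDoc$-bijective; you use descent of $q$ where the paper uses effectiveness together with surjectivity of $s$, and both work. In the converse you compare $f$ with $\id_X$ as quotients of $\rid_X$, spelling out the uniqueness-up-to-isomorphism step explicitly (retraction from the universal property, then epicness), where the paper leaves it implicit; the preliminary re-factorization paragraph there is unnecessary, since that final argument stands on its own.
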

\begin{proof}
To prove the left-to-right implication, consider a $\RDoc$-surjective arrow \fun{s}{X}{Y} and its factorization $s = i\circ q$ as in \cref{prop:qi-factor}. 
Then, from $\gr{i} = \gr{q}\rconv\rcomp\gr{s}$ we deduce 
$ \gr{i}\rconv\rcomp\gr{i} 
  = \gr{s}\rconv\rcomp\gr{q}\rcomp\gr{q}\rconv\rcomp\gr{s} 
  = \gr{s}\rconv\rcomp\gr{s} 
  = \rid_Y$, 
hence $i$ is $\RDoc$-surjective. 
Since $\RDoc$ is balanced, $i$ is an isomorphism and thus $s$ is a quotient arrow. To prove the right-to-left implication, consider a $\RDoc$-bijective arrow \fun{f}{X}{Y}. 
Then, $f$ is in particular $\RDoc$-surjective, so a quotient arrow of $\gr{f}\rcomp\gr{f}\rconv = \rid_X$. But also $\id_X$ is a quotient arrow of $\rid_X$, which make $f$ an isomorphism.
\end{proof}

Therefore (by \cref{cor:fac-s}) in a balanced relational doctrine with quotients 
$\RDoc$-surjective and $\RDoc$-injective arrows form an orthogonal factorization system. 

A way to express  the Rule of Choice in a category is to ask that epimorphisms split. Sometimes one asks that only a certain class of epimorphisms split (e.g. in the internal logic of a regular category it is natural to say that Choice holds if regular epimorphisms split). 
 Notice that  in a balanced category (see \cref{rem:cat-balance}) these two formulations coincide. %
Similarly, in a relational doctrine one can state the Rule of Choice either by requiring that all quotient arrows split or that all surjective arrows split 
and, by \cref{prop:quot-surj-balance}, these two formulations are equivalent when the doctrine is balanced. 
The next proposition relates  these concepts to projective objects. 
\begin{proposition}\label[prop]{prop:proj-obj-choice}
Let \fun{\RDoc}{\bop\CC}{\Pos} be  a relational doctrine with quotients. 
Then, the following hold.
\begin{enumerate}
\item\label{prop:proj-obj-choice:1}
All objects in \CC are $\RDoc$-projective if and only if every quotient arrow splits. 
\item\label{prop:proj-obj-choice:3}
If $\RDoc$ is extensional, then 
$\RDoc$ is balanced and all objects of \CC are $\RDoc$-projective if and only if every $\RDoc$-surjective arrow in \CC splits. 
\end{enumerate}
\end{proposition}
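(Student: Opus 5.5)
For \cref{prop:proj-obj-choice:1}, I would argue both directions directly from \cref{def:proj-obj}. Assume every object is $\RDoc$-projective and let \fun{q}{X}{Y} be a quotient arrow. I apply projectivity of $Y$ to the arrow $\id_Y$ and to $q$. This gives \fun{h}{Y}{X} with $q\circ h=\id_Y$, so $q$ splits.

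Conversely, assume every quotient arrow splits. Take an arrow \fun{f}{P}{Y} and a quotient arrow \fun{q}{X}{Y}, and choose a section $s$ of $q$. Then $h=s\circ f$ satisfies $q\circ h=f$. Hence every object is projective. This part is purely formal.

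For \cref{prop:proj-obj-choice:3}, assume $\RDoc$ is extensional. For the left-to-right direction, suppose $\RDoc$ is balanced and all objects are projective. By \cref{prop:quot-surj-balance}, every $\RDoc$-surjective arrow is a quotient arrow. By \cref{prop:proj-obj-choice:1}, every quotient arrow splits, so every $\RDoc$-surjective arrow splits.

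For the right-to-left direction, suppose every $\RDoc$-surjective arrow splits. Quotient arrows are descent, hence $\RDoc$-surjective, so they split. By \cref{prop:proj-obj-choice:1}, all objects are then projective.

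It remains to show balancing. Let \fun{f}{X}{Y} be $\RDoc$-bijective. Since $f$ is $\RDoc$-surjective, it has a section $s$ with $f\circ s=\id_Y$. By \cref{prop:extomonic}, which uses extensionality, the $\RDoc$-injective arrow $f$ is monic. From $f\circ s\circ f=f=f\circ\id_X$ I cancel $f$ and get $s\circ f=\id_X$. So $f$ is an isomorphism.

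Alternatively, \cref{prop:surj-quot-split} shows that split epis are quotient arrows, and a $\RDoc$-injective quotient arrow is an iso, as noted after \cref{def:balance}. Either route works.

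The only step that is not purely formal is this balancing argument, and specifically the use of extensionality. Without it, $\RDoc$-injective arrows need not be monic, and the cancellation giving $s\circ f=\id_X$ would fail. I therefore expect that to be the point needing care, and \cref{prop:extomonic} resolves it.
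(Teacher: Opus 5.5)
Your proof is correct and essentially matches the paper's: part 1 is the same formal argument, and part 2 combines part 1 with \cref{prop:quot-surj-balance} just as the paper does. The only difference is how you derive balancedness. You cancel $f$, which is monic by \cref{prop:extomonic}, against its section directly, whereas the paper turns split surjections into quotient arrows via \cref{prop:surj-quot-split} and then applies \cref{prop:quot-surj-balance}; this is exactly the alternative route you also sketch.
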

\begin{proof}
\cref{prop:proj-obj-choice:1}. 
Suppose all objects in \CC are $\RDoc$-projective and consider a quotient arrow \fun{q}{X}{W}. 
Since $W$ is $\RDoc$-projective, there is an arrow \fun{s}{W}{X} such that $q\circ s = \id_W$ as neded. 
On the other hand, if all quotient arrows split, given an arrow \fun{f}{X}{Z} and a quotient arrow \fun{q}{Y}{Z}, we know that $q$ has a section \fun{s}{Z}{Y} and so we have $q\circ s \circ f = f$, proving that $X$ is $\RDoc$-projective. \cref{prop:proj-obj-choice:3}. 
Immediate using \cref{prop:proj-obj-choice:1,prop:surj-quot-split,prop:quot-surj-balance}. 
\end{proof}

We conclude the section studying how doctrines of algebras interact with the choice principles introduced so far.

\begin{proposition}\label[prop]{prop:eqc-mnd-balanced} 
Let \fun\RDoc{\bop\CC}{\Pos} be an extensional relational doctrine. 
Then, the following are equivalent
\begin{enumerate}
\item\label{prop:eqc-mnd-balanced:1}
$\RDoc$ is balanced. 
\item\label{prop:eqc-mnd-balanced:2}
For every monad $\mnd = \ple{T,\eta,\mu}$ on $\RDoc$ in $\RDtn$, the doctrine $\RDoc^\mnd$ is balanced.
\end{enumerate}
\end{proposition}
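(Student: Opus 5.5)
The implication from \cref{prop:eqc-mnd-balanced:2} to \cref{prop:eqc-mnd-balanced:1} is immediate. I will instantiate \cref{prop:eqc-mnd-balanced:2} with the identity monad on $\RDoc$. Its Eilenberg-Moore doctrine $\RDoc^\mnd$ is isomorphic to $\RDoc$: algebras are pairs $\ple{X,\id_X}$, and the descent condition $\gr{\id}\rconv\rcomp\relr\rcomp\gr{\id}\order\relr$ is trivial. So balancedness of $\RDoc^\mnd$ is exactly balancedness of $\RDoc$.

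For the converse, assume $\RDoc$ is balanced and fix a monad $\mnd=\ple{T,\eta,\mu}$ on $\RDoc$ in $\RDtn$. Let \fun{f}{\ple{X,a}}{\ple{Y,b}} be a homomorphism that is $\RDoc^\mnd$-bijective.

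The first step is to show that $f$ is $\RDoc$-bijective as an arrow of \CC. Graphs in $\RDoc^\mnd$ are computed as in $\RDoc$, since $\RDoc^\mnd\reidx{f,\id}=\RDoc\reidx{f,\id}$. The identity relation on $\ple{X,a}$ is $\rid_X$ itself. To check this I will verify that $\rid_X$ satisfies the algebra closure condition: $\gr{a}\rconv\rcomp\lift{T}(\rid_X)\rcomp\gr{a}=\gr{a}\rconv\rcomp\gr{a}\order\rid_X$, which holds because $\lift{T}$ preserves identities and $\gr{a}$ is functional. The forgetful 1-arrow $U$ preserves composition and converse. Hence the injectivity and surjectivity inequalities for $\gr{f}$ in $\RDoc^\mnd$ are literally the same inequalities in $\RDoc$. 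By balancedness of $\RDoc$, $f$ has an inverse $g$ in \CC.

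The second step is to show that $g$ is an algebra homomorphism, that is, $g\circ b=a\circ\fn{T}g$. From $f\circ a=b\circ\fn{T}f$ I will compose with $g$ on the left and with $\fn{T}g$ on the right, using $\fn{T}f\circ\fn{T}g=\id$. This gives $a\circ\fn{T}g=g\circ b$. So $g$ is the inverse of $f$ in $\CC^{\fn\mnd}$, and $\RDoc^\mnd$ is balanced.

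Extensionality does not seem necessary in the second direction; it may be needed only to ensure that reasoning with graphs matches reasoning with arrows. I expect no real obstacle. The only point needing care is checking that the identity relation in $\RDoc^\mnd$ is indeed $\rid_X$, so that $\RDoc^\mnd$-bijectivity coincides with $\RDoc$-bijectivity; I will justify this with the closure computation above.
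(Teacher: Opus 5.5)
Your proposal is correct and follows essentially the same route as the paper. Since $\RDoc^\mnd$ inherits identities, composition, converse and reindexing from $\RDoc$, an $\RDoc^\mnd$-bijective homomorphism is $\RDoc$-bijective, hence invertible in $\CC$ by balancedness, and the converse direction comes from the identity monad. You additionally spell out two points the paper leaves implicit: that $\rid_X$ really is the identity relation on $\ple{X,a}$, and that the inverse in $\CC$ is an algebra homomorphism (the paper only hints at this by noting that $\fn{T}$ of the arrow is invertible).
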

\begin{proof} Recall that $\RDoc^\mnd$ inherits from $\RDoc$ all the relational operations and the reindexing, therefore if $\fun{\ple{Td,d}}{\ple{X,a}}{\ple{Y,b}}$ is $\RDoc^\mnd$-bijective, $d$ is $\RDoc$-bijective, hence an isomorphism in $\CC$. So also $Td$ is an isomorphism.

The converse follows taking as $\mnd$ the identity functor on $\CC$. 
\end{proof}

The following is a stronger form of \cref{thm:eqc-mnd}. 

\begin{theorem}\label[thm]{thm:eqc-mnd-choice} 
Let \fun\RDoc{\bop\CC}{\Pos} be an extensional relational doctrine with quotients. 
Then, the following are equivalent
\begin{enumerate}
\item\label{thm:eqc-mnd-choice:1}
Every quotient arrow in \CC splits. 
\item\label{thm:eqc-mnd-choice:2}
For every monad $\mnd = \ple{T,\eta,\mu}$ on $\RDoc$ in $\RDtn$, the relational doctrine $\RDoc^\mnd$ has quotients and $\CC_{\fn\mnd}$ is a $\RDoc^\mnd$-projective cover.
\end{enumerate}
\end{theorem}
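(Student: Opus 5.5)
The direction from (2) to (1) is the easy one. Take $\mnd$ to be the identity monad on $\RDoc$. Then $\RDoc^\mnd$ is $\RDoc$ and $\CC_{\fn\mnd}$ is all of $\CC$. So (2) says that every object of $\CC$ is $\RDoc$-projective, and \refItem{prop:proj-obj-choice}{1} gives that every quotient arrow splits.

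For (1) $\Rightarrow$ (2), the strategy is to reduce to \cref{thm:eqc-mnd}. That requires $\mnd$ to be a monad in $\EQRDtn$, i.e.\ $T$ must preserve quotients. The key observation is that under (1) every 1-arrow between extensional doctrines preserves quotients. Indeed, every quotient arrow $q$ splits and is effective descent, so by \cref{prop:split-one-arrow} $\fn{T}q$ is a split effective descent quotient arrow in $\RDoc$. We must also check that $\fn{T}q$ is a quotient for the \emph{right} equivalence relation, namely $\lift{T}(\eqrelr)$ where $\eqrelr=\gr{q}\rcomp\gr{q}\rconv$ by effectiveness. Since $\fn{T}q$ is effective, it is a quotient of its kernel $\gr{\fn{T}q}\rcomp\gr{\fn{T}q}\rconv$. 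Since $\lift{T}$ preserves composition and converse, and (as for any 1-arrow, via preservation of identities and naturality) $\lift{T}(\gr{q})=\gr{\fn{T}q}$, this kernel equals $\lift{T}(\gr{q}\rcomp\gr{q}\rconv)=\lift{T}(\eqrelr)$. The identity $\lift{T}(\gr{q})=\gr{\fn Tq}$ is the one small computation to verify: $\lift{T}(\RDoc_{q,\id}(\rid))=\RDoc_{\fn Tq,\id}(\lift T(\rid))$ by naturality.

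Hence $\mnd$ lives in $\EQRDtn$. By the result of \cite{DagninoP25tac} recalled above, $\RDoc^\mnd$ is then extensional with quotients. Moreover, by (1) and \refItem{prop:proj-obj-choice}{1}, all objects of $\CC$ are $\RDoc$-projective. Taking $\ct{G}=\CC$ as an $\RDoc$-projective cover (every object is a quotient of itself via the identity), \cref{thm:eqc-mnd} yields that $\CC_{\fn\mnd}$ is an $\RDoc^\mnd$-projective cover.

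The only delicate point is the quotient-preservation step. In particular, the notion of "preserves quotients" is relative to the image equivalence relation $\lift{T}(\eqrelr)$, not merely to some equivalence relation. This is handled by effectiveness, as explained above.
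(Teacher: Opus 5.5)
Your proposal is correct and follows the paper's proof essentially step for step. For (2)$\Rightarrow$(1) you take the identity monad and apply \cref{prop:proj-obj-choice}. For (1)$\Rightarrow$(2) you use \cref{prop:split-one-arrow} to make $T$ quotient-preserving, so that $\mnd$ is a monad in $\EQRDtn$, and then apply \cref{thm:eqc-mnd} with $\CC$ as its own projective cover. Your extra check that $\fn{T}q$ is a quotient of $\lift{T}(\eqrelr)$ specifically, via effectiveness and $\lift{T}(\gr{q})=\gr{\fn{T}q}$, is a detail the paper leaves implicit, and it is correct.
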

\begin{proof}
Suppose $\RDoc$-quotients split. By \refItem{prop:proj-obj-choice}{1} we know that objects of $\CC$ are $\RDoc$-projective, hence \CC is a $\RDoc$-projective cover of itself. 
Consider a monad $\mnd = \ple{T,\eta,\mu}$ on $\RDoc$ in \RDtn, then 
 $T$ preserves quotient arrows by \cref{prop:split-one-arrow}. This makes $\mnd$ a monad in \EQRDtn. By \cref{thm:eqc-mnd} the category $\CC_{\fn\mnd}$ is an $\RDoc^\mnd$-projective cover. 

Conversely, consider the identity monad on $\RDoc$, whose Eilenberg-Moore doctrine is $\RDoc$ itself and $\CC_{\fn\mnd}$ is $\CC$. Thus all the objects of $\CC$ are $\RDoc$-projective, so the thesis follows by \refItem{prop:proj-obj-choice}{1}. 
\end{proof}

Notice that, in proving the  theorem above, we have also proved that every monad in \RDtn on an extensional relational doctrine with quotients where surjections split is actually a monad in $\EQRDtn$, that is, its underlying 1-arrow preserves quotients. 

\begin{corollary}\label[cor]{cor:eqc-mnd-choicec} 
Let \fun\RDoc{\bop\CC}{\Pos} be an extensional relational doctrine with quotients. 
Then, the following are equivalent
\begin{enumerate}
\item\label{thm:eqc-mnd-choicec:1}
Every $\RDoc$-surjective arrow in \CC splits. 
\item\label{thm:eqc-mnd-choicec:2}
For every monad $\mnd = \ple{T,\eta,\mu}$ on $\RDoc$ in $\RDtn$, the relational doctrines $\RDoc^\mnd$ is  balanced,  has quotients and $\CC_{\fn\mnd}$ is a $\RDoc^\mnd$-projective cover.
\end{enumerate}
\end{corollary}
\begin{proof}
Note that by \cref{prop:proj-obj-choice}  $\RDoc$-surjections split if and only if $\RDoc$ is balanced and quotients split. The claim follows by  \cref{prop:eqc-mnd-balanced} and \cref{thm:eqc-mnd-choice}. 
\end{proof}

\cref{thm:eqc-mnd-choice} almost resemble the analogous result for the exact completion \cite{Vitale94}, but it is not a proper generalization. 
Indeed \cref{thm:eqc-mnd-choice} applies to any relational doctrine and not just to that of jointly monic spans, which is the one behind the exact completion, 
however, it considers monads compatible with relational operations, while the result in \cite{Vitale94} applies to arbitrary monads (not necessarily preserving pullbacks and factorizations).
Nevertheless, we can still recover a result for arbitrary monads on an exact category as the following example shows. 

\begin{example}\label[ex]{ex:eqc-mnd-exact} 
Let \CC be an exact category where regular epimorphisms split
and consider a monad $\mnd = \ple{T,\eta,\mu}$ on \CC. 
The Eilenberg-Moore category $\CC^\mnd$ is exact as well. 
The forgetful functor \fun{U}{\CC^\mnd}{\CC}, being a right adjoint, preserves monomorphisms. 
Moreover, it preserves coequalizers of equivalence relations. 
Hence, it extends to a 1-arrow \oneAr{\JSpan{U}}{\JSpan{\CC^\mnd}}{\JSpan\CC} in \EQRDtn, where $\fn{\JSpan{U}} = U$. 
The functor $U$ has a left adjoint \fun{F}{\CC}{\CC^\mnd} and the component at the algebra \ple{X,a} of the counit of this adjunction is the algebra homomorphism \fun{\epsilon_\ple{X,a}}{\ple{TX,\mu_X}}{\ple{X,a}} given by $\epsilon_\ple{X,a} = a$, which is a coeqalizer and so a regular epimorphism, that is, a quotient arrow in $\JSpan{\CC^\mnd}$. 
Since every regular epimorphism in \CC splits, every object of \CC is $\JSpan\CC$-projective. 
Therefore, by \cref{prop:proj-obj-ladj}, we get that the Kleisli category $\CC_\mnd$ is a $\JSpan{\CC^\mnd}$-projective cover, 
hence, by \cref{cor:proj-obj}, 
we get that $\JSpan{\CC^\mnd}$ is equivalent to the extensional quotient completion of its restriction to $\CC_\mnd$. 
In other words, if \fun{K}{\CC_\mnd}{\CC^\mnd} is the inclusion functor of $\CC_\mnd$ into $\CC^\mnd$, we have that 
$\JSpan{\CC^\mnd}$ is equivalent to $\EQR{K^\star\JSpan{\CC^\mnd}}$. 
Finally, using \cref{l:analogo0}, 
it is not difficult to observe that $K^\star\JSpan{\CC^\mnd}$ is equivalent to $\Span{\CC_\mnd}$, thus recovering the known result in \cite{Vitale94}. 
\end{example}

\bibliographystyle{elsarticle-num} 
\bibliography{biblio}

\end{document}